\begin{document}

\title{Higher semiadditive Grothendieck-Witt theory and the $K(1)$-local sphere}
\maketitle

\begin{abstract}
    We develop a higher semiadditive version of Grothendieck-Witt theory. We then apply the theory in the case of a finite field to study the higher semiadditive structure of the $K(1)$-local sphere $\Sph_{K(1)}$ at the prime $2$, in particular realizing the non-$2$-adic rational element $1+\varepsilon \in \pi_0\Sph_{K(1)}$ as a ``semiadditive cardinality.''  As a further application, we compute and clarify certain power operations in $\pi_0\Sph_{K(1)}$.    
\end{abstract}
\setcounter{tocdepth}{2}
\tableofcontents{}

\section{Introduction}

Let $M$ be an abelian group with an action of a finite group $G$.  Then one has a natural \emph{norm map} 
\[
\Nm_G: M_G \to M^G
\]
given by the formula $\overline{m} \mapsto \Sigma_g gm$.  This map is not an isomorphism in general, but it is in the special case where $M$ is a $\mathbb{Q}$-vector space.  This key feature of the rational situation makes representation theory in characteristic zero more tractable than representation theory in positive characteristic.  

In chromatic homotopy theory, one studies a sequence of $\infty$-categories which interpolate between the two situations: these are known (for a fixed prime $p$) as \emph{$K(n)$-local spectra} and denoted by $\Sp_{K(n)}$ for $n\geq 0$.   The $\infty$-category $\Sp_{K(0)}\simeq \Sp_{\mathbb{Q}}$ is simply the derived $\infty$-category of $\mathbb{Q}$, and each $\Sp_{K(n)}$ can be thought of as the localization of $p$-local spectra at a homotopical analogue of a field.  These $\infty$-categories of ``intermediate characteristic'' \cite{LurInt} share a striking similarity with the rational situation:

\begin{thm}\label{tate_vanish}[Hovey-Sadofsky \cite{HS96}, Greenlees-Sadofsky \cite{GS96}]
Let $G$ be a finite group and $X\in \Sp_{K(n)}^{BG}$ be a $K(n)$-local spectrum with $G$-action.  Then the natural norm map
\[
L_{K(n)}X_{hG} \to X^{hG}
\]
is an equivalence.  
\end{thm}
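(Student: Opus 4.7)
The plan is to reformulate the statement as the vanishing of the Tate construction: since the cofiber of the norm map $X_{hG}\to X^{hG}$ is the Tate spectrum $X^{tG}$, it suffices to show that $L_{K(n)}X^{tG}\simeq 0$ for every $X\in\Sp_{K(n)}^{BG}$.

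I would first reduce to the case $G=C_\ell$ cyclic of prime order. If $H\triangleleft G$ is a normal subgroup with quotient $Q$, and Tate vanishing is known for both $H$ and $Q$ with arbitrary coefficients, then applying it to $X$ as an $H$-spectrum gives $L_{K(n)}X_{hH}\simeq X^{hH}$, and applying it to $X^{hH}$ as a $Q$-spectrum gives $L_{K(n)}(X^{hH})_{hQ}\simeq(X^{hH})^{hQ}=X^{hG}$; combining these yields $L_{K(n)}X_{hG}\simeq X^{hG}$. A composition-series induction then reduces to $G=C_\ell$ of prime order. The case $\ell\neq p$ is formal: the transfer--restriction composite is multiplication by $\ell$, which is a unit in $K(n)_*=\mathbb{F}_p[v_n^{\pm 1}]$ and hence splits the norm map after $K(n)$-localization.

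The essential case is $G=C_p$. I would first establish the key computation $K(n)^{tC_p}\simeq 0$. By the Ravenel--Wilson computation, $K(n)^{*}(BC_p)\cong K(n)^*[[x]]/[p](x)$, and the height-$n$ Honda formal group satisfies $[p](x)\equiv v_n x^{p^n}\pmod{x^{p^n+1}}$. Passing to Tate cohomology effectively inverts the Euler class $x$, so the defining relation $[p](x)=0$ becomes a unit times a power of $x$, collapsing the ring to zero. To deduce vanishing for arbitrary $X$, I would exploit the lax symmetric monoidal structure of $(-)^{tC_p}$: for any $K(n)$-module $Y$ with $C_p$-action, $Y^{tC_p}$ is naturally a module over the ring spectrum $K(n)^{tC_p}$, and since the unit of a zero ring is zero, $Y^{tC_p}\simeq 0$. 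Applying this to $Y=K(n)\wedge X$ and descending along the unit $\mathbb{S}_{K(n)}\to K(n)$ yields $L_{K(n)}X^{tC_p}\simeq 0$.

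The hardest step will be the final propagation from $K(n)$-modules back to arbitrary $K(n)$-local $C_p$-spectra: because $(-)^{tC_p}$ is only lax monoidal, it does not obviously commute with smashing with $K(n)$, and making the descent rigorous---whether via the $K(n)$-based Amitsur complex or a thick-subcategory generation argument---is the technical heart of the proof.
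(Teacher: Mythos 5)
This theorem is a cited background result in the paper (attributed to Hovey--Sadofsky and Greenlees--Sadofsky), stated in the introduction to motivate the discussion of higher semiadditivity; the paper offers no proof of it, so there is no internal argument to compare against. Assessing your sketch on its own merits: the overall strategy (reduce to $G=C_p$, compute $K(n)^{tC_p}\simeq 0$ via Ravenel--Wilson and the $p$-series $[p](x)=v_n x^{p^n}$ of the Honda formal group, then propagate) is indeed the standard shape of the argument, and your computation that $K(n)^{tC_p}\simeq 0$ is correct.

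However, there are two genuine gaps. First, the reduction to prime-order cyclic groups via a composition-series induction does not work for general finite $G$: composition factors of a finite group are simple, not cyclic, and non-abelian simple groups (e.g.\ $A_5$) have no subnormal series with cyclic quotients. The correct reduction is in two stages: first use the transfer--restriction retraction to pass from $G$ to a $p$-Sylow subgroup $G_p$ (the index $[G:G_p]$ is a unit $K(n)$-locally, which is the real use of your ``$\ell\neq p$'' observation), and then use that a $p$-group is nilpotent, hence admits a central series with $C_p$ quotients, to run your normal-subgroup induction down to $C_p$. Second, and more seriously, the propagation from $K(n)^{tC_p}\simeq 0$ to $L_{K(n)}X^{tC_p}\simeq 0$ for arbitrary $K(n)$-local $X$ is not actually carried out: you correctly flag that $(-)^{tC_p}$ is only lax monoidal and does not commute with $K(n)\wedge(-)$, but the proposed fixes (Amitsur descent or a thick-subcategory argument) are not obviously available, since $(-)^{tC_p}$ commutes neither with the totalizations appearing in the $K(n)$-Adams resolution nor with the infinite colimits needed to generate $\Sp_{K(n)}^{BC_p}$. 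This step is the heart of the Greenlees--Sadofsky argument (via the Tate spectral sequence for $v_n$-periodic complex-oriented theories) and cannot be waved through; as written the sketch stops just short of the essential difficulty.
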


\Cref{tate_vanish} was subsequently generalized to the telescopic localizations $\Sp_{T(n)}$ by Kuhn \cite{kuhn2004tate}.  

Hopkins and Lurie \cite{HL} gave the following insightful interpretation of these ideas: recall that if $\cC$ is a semiadditive category, then for any finite set $T$ and $T$-tuple of elements $\{ Y_t \}_{t\in T}$ of $\cC$, there is a canonical isomorphism $\coprod_{t\in T} Y_t \simeq \prod_{t\in T}Y_t$.  \Cref{tate_vanish} asserts that an analogous statement is true in $\Sp_{K(n)}$ with the groupoid $BG$ in the place of the finite set $T$; namely, by viewing $X$ as a functor $X: BG \to \Sp_{K(n)}$, the theorem asserts that the colimit and the limit of $X$ are naturally homotopy equivalent.

Thus, $\Sp_{K(n)}$ exhibits a ``higher'' form of semiadditivity, which Hopkins and Lurie \cite{HL} dub \emph{$1$-semiadditivity}: the idea is that a $0$-semiadditive $\infty$-category is simply a semiadditive $\infty$-category, and for $m > 0$, an $m$-semiadditive $\infty$-category $\cD$ should have the feature that for any $m$-truncated $\pi$-finite space $A$ (that is, a space with finite homotopy groups concentrated in degrees $0$ through $m$) and any diagram $f:A \to \cD$, there is a canonical isomorphism 
\[
\Nm_f: \colim f \iso \invlim f.
\]
Hopkins and Lurie extend \Cref{tate_vanish} by showing that $\Sp_{K(n)}$ is $m$-semiadditive for all $m$.  This was further generalized in work by the first author, Schlank, and Yanovski \cite{TeleAmbi}, who proved the analogous statement for the telescopic localizations $\Sp_{T(n)}$.  

An important consequence of higher semiadditivity is that the objects in a higher semiadditive $\infty$-category acquire a rich algebraic structure.  Namely, recall that any object $Y$ in a semiadditive category canonically acquires the structure of a commutative monoid; in particular, for any finite set $T$, there is a canonical addition map 
\[
\prod_T Y \simeq \coprod_T Y \to Y,
\]
which exists because the semiadditive structure supplies an \emph{isomorphism} $\coprod_T Y \simeq \prod_T Y$.  An object $X$ of an $m$-semiadditive $\infty$-category admits even more structure: one can sum not only over finite sets, but over any $m$-truncated $\pi$-finite space $A$.  More precisely, given such an $A$, there is a canonical higher addition map $\push{A} \colon X^A \to X$ given by the composite
\[
X^A \simeq X\otimes A \to X \otimes \pt  \simeq X.
\]
Here, the first map is (the inverse of) the equivalence resulting from the higher semiadditive structure, and the second map is induced by the map $A \to \mathrm{pt}$.  

\begin{rem}\label{intro:highercomm}
The collection of these $\push{A}$ maps, together with corresponding restriction maps, endow $X$ with a structure known as an $m$-commutative monoid (with $0$-commutative monoids being just ordinary commutative monoids).  First described systematically by Harpaz \cite{HarpazSpans}, this $m$-commutative monoid structure for $X$ is captured by a functor 
\[
X^{(-)}: \Span(\Spc_{m})^{\op} \to \Sp
\]
given by $A\mapsto X^A$.  Here, $\Span(\Spc_m)$ is the $\infty$-category of $m$-truncated $\pi$-finite spaces and spans.  In fact, we will work in a $p$-typical setting, where one further restricts to $m$-truncated $\pi$-finite spaces with homotopy groups of $p$-power order.  
We refer the reader to \Cref{sect:prelim} for a more careful discussion of ($p$-typical) $m$-commutative monoids.    
\end{rem}

\subsection{Higher semiadditive cardinality}

This paper focuses on one part of this $m$-commutative monoid structure:

\begin{defn}[\cite{AmbiHeight} Definition 2.1.5, \cite{HL} Notation 5.1.7]
Let $\cC$ be an $m$-semiadditive $\infty$-category, let $X\in \cC$ and let $A$ be an $m$-truncated $\pi$-finite space.  Then let $|A|_X: X\to X$ be the endomorphism of $X$ defined by the composite 
\[
|A|_X: X \oto{\Delta_A} X^A \oto{\push{A}} X,
\]
where $\Delta_A$ is the canonical diagonal map induced by $A\to \pt$.  As $X$ varies, these maps assemble to a natural transformation $|A|: \Id_{\cC} \to \Id_{\cC}$, which we will refer to as the \emph{cardinality of $A$}.  

\end{defn}

Informally, $|A|$ is the $A$-fold sum of the identity map.  The term ``cardinality'' arises because in the special case where $A$ is discrete, the endomorphism $|A|$ is given by multiplication by the cardinality of the finite set $A$.  In fact, the cardinality of $\pi$-finite spaces in rational spectra is a well-known invariant:

\begin{example}\label{exm:rat_card}
In the $\infty$-category $\Sp_{\mathbb{Q}}$ of rational spectra, the cardinality of any $\pi$-finite space $A$ is given by the Baez-Dolan homotopy cardinality, which is the rational number
\[
|A|_0 := \sum_{a\in \pi_0(A)} \prod_{n\geq 1}|\pi_n(A,a)|^{(-1)^n},
\]
regarded as an element of $\pi_0(\Sph_{\mathbb{Q}})\simeq \mathbb{Q}$ (cf. \cite[Example 2.2.2]{AmbiHeight}).  
\end{example}

But the cardinalities of $\pi$-finite spaces determine canonical invariants in \emph{any} higher semiadditive $\infty$-category.  In particular, since $\Sp_{K(n)}$ is $\infty$-semiadditive \cite{HL}, one has a family of natural elements $|A|_{\Sph_{K(n)}} \in \pi_0(\Sph_{K(n)})$, one for each $\pi$-finite space $A$.  While these invariants are not known for general $n$, the main result of this paper is a (partial) extension of \Cref{exm:rat_card} to the case of $\Sp_{K(1)}$.  More specifically, for any prime $p$ and any $\pi$-finite space $A$ with $p$-power torsion homotopy groups, we compute the cardinality of $A$ as an element $|A|\in \pi_0 \Sph_{K(1)}$ (\Cref{thm:main}).

\subsection{Motivation: $T(n)$-local homotopy theory}

The authors' motivation for considering higher semiadditive cardinalities comes from studying the $\infty$-categories of telescopically localized spectra.  Obtained by localizing spectra at a height $n$ telescope $T(n)$, these $\infty$-categories $\Sp_{T(n)}$ admit localization functors $L_{K(n)}: \Sp_{T(n)} \to \Sp_{K(n)}$ which Ravenel's \emph{telescope conjecture} predicts are equivalences; while this conjecture is true at height 1 by work of Miller \cite{Miller}, Mahowald \cite{Mahowald}, and Bousfield \cite{Bousfield}, it is widely believed to be false for $n\geq 2$.  As a result, telescopically localized homotopy theory is notoriously difficult to access computationally.

However, by joint work between the second author, Schlank, and Yanovski \cite{TeleAmbi}, following work of Kuhn \cite{kuhn2004tate} (in the case of $1$-semiadditivity), the $\infty$-category of $T(n)$-local spectra is $\infty$-semiadditive.  It follows that one has canonical elements $|A|_{\Sph_{T(n)}} \in \pi_0(\Sph_{T(n)})$ for all $\pi$-finite spaces $A$.  Moreover, the localization functor $L_{K(n)}: \Sp_{T(n)} \to \Sp_{K(n)}$ is colimit preserving, and therefore sends the element $|A|_{\Sph_{T(n)}}$ to the element $|A|_{\Sph_{K(n)}} \in \pi_0 \Sph_{K(n)}$.  This provides a family of elements $|A|$ in the homotopy of the $K(n)$-local spheres which provably lift to elements in the $T(n)$-local spheres -- one may then wonder which elements in $\pi_0(\Sph_{K(n)})$ arise in this manner.  

\begin{example}[Example 2.2.4 \cite{AmbiHeight}] \label{exm:En-mod-cardinality}
In the $\infty$-category $\Mod_{E_n}(\Sp_{K(n)})$ of $K(n)$-local modules over a height $n$ Lubin-Tate theory $E_n$, one has the explicit formula 
\[
|B^{k}C_p| = p^{\binom{n-1}{k}}.
\]
\end{example}

In other words, under the natural map $\Sph_{T(n)} \to E_n$, the homotopy element $|B^k C_p|_{\Sph_{T(n)}}$ is taken to the integer $ p^{\binom{n-1}{k}}.$  Consequently, one may naturally ask whether the cardinalities $|A|$ produce interesting elements in the homotopy groups of $\Sph_{T(n)}$, or if they are always just rational numbers such as the ones given in \Cref{exm:En-mod-cardinality}.

A consequence of our main result is that the cardinalities of $\pi$-finite spaces \emph{can} in general produce non-rational elements in the $K(n)$ or $T(n)$-local spheres.  In particular, we show at the prime $2$ that $|BC_2|_{\Sph_{K(1)}}\in \pi_0(\Sph_{K(1)})$ is \emph{not} a $2$-adic integer.

\subsection{Outline of results}

\subsubsection{$K(1)$-local cardinalities}
\label{subsubsec:sk1}
In order to state our results, we recall some facts about the $K(1)$-local sphere, many of which date back to the work of Adams on the image of $J$; the reader is referred to \cite[\S 1, \S 2]{HopkinsK1} and \cite[\S 6]{Henn} for more details. 

Let $p$ be a prime and let $\Sph_{K(1)}$ denote the $K(1)$-local sphere at the prime $p$.  Then $\Sph_{K(1)}$ fits into a cofiber sequence 
\begin{equation}\label{eqn:cofibSK1}
\Sph_{K(1)} \to K \oto{\psi^{g}-1} K
\end{equation}
where $K$ denotes $KU^{\wedge}_p$ for $p$ odd and $KO^{\wedge}_2$ for $p=2$, and $\psi^{g}$ denotes an Adams operation corresponding to a topological generator $g\in \ZZ_p^{\times}$ if $p$ odd and $g\in \ZZ_2^{\times}/\{\pm 1\}$ if $p=2$.  From this fiber sequence we can extract the ring structure of $\pi_0\Sph_{K(1)}$. Note first that if $\eta \in \pi_1 KO$ is the Hopf element, then $\psi^g(\eta) = \eta$.  Then, by the long exact sequence of homotopy groups associated with (\ref{eqn:cofibSK1}), we get
\[
\pi_0\Sph_{K(1)} \cong 
\begin{cases}
\ZZ_p    & \text{if }p\text{ is odd},\\
\ZZ_2 \oplus \ZZ/2 \cdot \varepsilon    & \text{if }p=2.
\end{cases}
\]
Here, $\delta \colon \pi_1\KO^{\wedge}_2 \to \pi_0\Sph_{K(1)}$ is the boundary map and
\[
\varepsilon = \delta(\eta) = \eta \cdot \zeta
\] 
where $\zeta = \delta(1) \in \pi_{-1}\Sph_{K(1)}$. 
Since, again from the long exact sequence, $\pi_{-2}\Sph_{K(1)} = 0$, we deduce that $\zeta^2 = 0$ and hence $\varepsilon^2 = 0$. Therefore, we obtain the following ring structure for $\pi_0\Sph_{K(1)}$:
\begin{equation} \label{pi_0sk1}
\pi_0\Sph_{K(1)} \cong 
\begin{cases}
\ZZ_p    & \text{if }p\text{ is odd},\\
\ZZ_2[\varepsilon]/(\varepsilon^2,2\varepsilon)    & \text{if }p=2.
\end{cases}
\end{equation}

Our main result, which is proven as \Cref{card_BC_2}, is:

\begin{thmx}\label{thm:main}
Let $p$ be a prime, let $\Sph_{K(1)}$ denote the $K(1)$-local sphere at the prime $p$, and let $A$ be a connected $\pi$-finite space with $p$-power torsion homotopy groups.  Then
\[
|A|_{\Sph_{K(1)}} = 
\begin{cases}
1 & \text{if }p\text{ is odd},\\
1+\log_2(|A|_0)\cdot \varepsilon & \text{if }p=2,
\end{cases}
\]
where $|A|_0$ denotes the homotopy cardinality of \Cref{exm:rat_card}.  In particular,
\[
|BC_p|_{\Sph_{K(1)}} = 
\begin{cases}
1 & \text{if }p\text{ is odd},\\
1+\varepsilon & \text{if }p=2.
\end{cases}
\]
\end{thmx}

\Cref{thm:main} also determines $|A|_X: X \to X$ for general $X\in \Sp_{K(1)}$ as multiplication by $|A|_{\Sph_{K(1)}}$.  
In fact, the most essential content of the theorem is the computation of $|BC_2|$ in the $\Sph_{K(1)}$ at the prime $2$.  This is because of the following two remarks:

\begin{rem}
For all primes $p$, the computation of the cardinalities for spaces as in \Cref{thm:main} reduces to the computation of $|BC_p|_{\Sph_{K(1)}}$ via formal properties of higher semiadditivity (cf. \Cref{card_BC_2} for the reduction).   
\end{rem}

\begin{rem}
The map of rings $\Sph_{K(1)} \to KU^{\wedge}_p$ induces a ring map
\begin{equation}\label{eqn:pi0unit}
\pi_0\Sph_{K(1)} \to \pi_0 KU^{\wedge}_p \cong \ZZ_p.
\end{equation}
This map arises as the endomorphisms of the unit of the colimit preserving functor 
\[
-\otimes_{\Sph_{K(1)}} KU^{\wedge}_p \colon \Sp_{K(1)} \to \Mod_{KU^{\wedge}_p}(\Sp_{K(1)}),
\]
and thus, for any $\pi$-finite space $A$, it sends $|A|\in \pi_0 \Sph_{K(1)}$ to $|A|\in \pi_0 KU^{\wedge}_p.$


In the case when the prime $p$ is odd, the map (\ref{eqn:pi0unit}) is an isomorphism.  It follows that since $|BC_p|_{KU^{\wedge}_p} = 1$ (in $\Mod_{KU^{\wedge}_p}(\Sp_{K(1)})$) by \Cref{exm:En-mod-cardinality}, we also have $|BC_p|_{\Sph_{K(1)}}=1$.  

On the other hand, when $p=2$, the map of (\ref{eqn:pi0unit}) is the map 
\[
\ZZ_2[\varepsilon]/(\varepsilon^2,2\varepsilon)  \to \ZZ_2,
\]
given by the formula $a+ b\varepsilon \mapsto a.$
Since this map is not injective, the fact that $|BC_2|=1$ in $\Mod_{KU^{\wedge}_2}(\Sp_{K(1)})$ does not imply the same statement in $\Sp_{K(1)}$.  In particular, both $1$ and $1+\varepsilon$ in $\pi_0\Sph_{K(1)}$ map to $1\in \pi_0 KU^{\wedge}_2$.  The majority of the paper is dedicated to showing that in fact $|BC_2|_{\Sph_{K(1)}} = 1+ \varepsilon \in \pi_0\Sph_{K(1)}$.  
\end{rem}

\subsubsection{Higher semiadditive Grothendieck-Witt theory}

The proof of \Cref{thm:main} is inspired by the following: at an odd prime $p$, it follows from work of Quillen that one may identify $\Sph_{K(1)}$ as the $K(1)$-local algebraic $K$-theory of a finite field $\FF_{\ell}$, where $\ell$ is prime such that $\ell\in \ZZ_p^{\times}$ is a topological generator.  At the prime $p=2$, one may consider a more sophisticated variant of this construction: instead of finite dimensional $\FF_{\ell}$-vector spaces, one may consider the groupoid $\QF(\FF_{\ell})$ of pairs $(V,b)$ where $V$ is a finite dimensional $\FF_{\ell}$-vector space and $b$ is a non-degenerate symmetric bilinear form on $V$.  The group completion spectrum of $\QF(\FF_{\ell})$ is known as the Grothendieck-Witt theory of $\FF_{\ell}$, denoted $\GW(\FF_{\ell})$; this construction has been extensively generalized and studied in the foundational works \cite{nine-author,nine-author2, nine-author3}.  It follows essentially from work of Friedlander that one has the following:

\begin{thm}[Friedlander \cite{Friedlander}] \label{thm:introFriedlander}
Let $\ell \equiv 3,5\pmod{8}$ and let $\Sph_{K(1)}$ denote the $K(1)$-local sphere at the prime $p=2$.  Then there is an equivalence of spectra
\[
L_{K(1)}\GW(\FF_{\ell}) \simeq \Sph_{K(1)}.
\]
\end{thm}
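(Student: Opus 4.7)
The plan is to exhibit $L_{K(1)}\GW(\FF_\ell)$ as the fiber of a specific Adams-type self-map of $KO^\wedge_2$, and then match that self-map with the one appearing in the defining fiber sequence of $\Sph_{K(1)}$.

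The first, substantive step is to establish the hermitian analog of Quillen's fiber sequence for the $K$-theory of finite fields: a fiber sequence
\[
\GW(\FF_\ell)^\wedge_2 \to ko^\wedge_2 \oto{\psi^\ell - 1} ko^\wedge_2
\]
of connective $2$-complete spectra. The input is Friedlander's \'etale-homotopy comparison, which identifies the $2$-completion of the classifying space of the infinite orthogonal group over $\FF_\ell$ with the homotopy fixed points of $\psi^\ell$ acting on $B\mathrm{O}^\wedge_2$. Group-completing the hermitian symmetric monoidal groupoid $\QF(\FF_\ell)$---equivalently, applying the plus construction to its classifying space and delooping---then promotes this to a spectrum-level fiber sequence for $\GW$.

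Second, I apply $L_{K(1)}$. Since this functor is exact and sends $ko^\wedge_2$ to $KO^\wedge_2$ (by inverting the Bott class), the localized fiber sequence reads
\[
L_{K(1)}\GW(\FF_\ell) \to KO^\wedge_2 \oto{\psi^\ell - 1} KO^\wedge_2.
\]
Third, I compare this with the defining fiber sequence (\ref{eqn:cofibSK1}). The group $\ZZ_2^\times/\{\pm 1\}$ is pro-cyclic, isomorphic to $\ZZ_2$ via the $2$-adic logarithm of $1 + 4\ZZ_2$; the image of an integer $\ell$ is a topological generator iff $\ell \equiv \pm 5 \pmod 8$, which is precisely the hypothesis $\ell \equiv 3, 5 \pmod 8$. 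Taking this $\ell$ as the generator $g$ in (\ref{eqn:cofibSK1}) identifies the two fiber sequences on the nose and yields $L_{K(1)}\GW(\FF_\ell) \simeq \Sph_{K(1)}$.

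The main obstacle is the first step: promoting Friedlander's space-level comparison to a genuine spectrum-level fiber sequence involving $\GW$. At the prime $2$, the Witt-theoretic summand of $\GW(\FF_\ell)$ interacts non-trivially with the $K$-theoretic piece in low degrees, and one must verify that the resulting self-map of $ko^\wedge_2$ really is $\psi^\ell - 1$ rather than some $2$-local twist of it. Once this is settled, the remaining steps are essentially formal.
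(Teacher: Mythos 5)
Your proposal matches the paper's route: both reduce to the $2$-complete fiber sequence $\GW(\FF_\ell)^\wedge_2 \to ko^\wedge_2 \xrightarrow{\psi^\ell - 1} ko^\wedge_2$, apply $L_{K(1)}$, and identify the $K(1)$-local fiber of $\psi^\ell - 1$ on $\KO^\wedge_2$ with $\Sph_{K(1)}$ using that $\ell \equiv 3, 5 \pmod 8$ is exactly the topological-generator condition on $\ZZ_2^\times/\{\pm 1\}$. The ``main obstacle'' you flag --- upgrading Friedlander's space-level \'etale comparison to a spectrum-level fiber sequence involving $\GW$ and pinning down the self-map as $\psi^\ell - 1$ rather than a twist --- is exactly what the paper outsources to Fiedorowicz--Hauschild--May (\Cref{mayinput}), who prove the spectrum-level statement $\GW(\cl{\FF}_\ell)^\wedge_2 \simeq \tau_{\geq 0}\KO^\wedge_2$ intertwining $\varphi$ with $\psi^\ell$, and identify $\GW(\FF_\ell)^\wedge_2$ as the connective cover of the fiber of $1 - \varphi$; so your plan is correct as a plan, with that one step properly a citation rather than something to re-derive from Friedlander's space-level results alone.
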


Our strategy to prove \Cref{thm:main} is to model the higher semiadditive structure on $\Sph_{K(1)}$ as the effect on $K$-theory of certain explicit operations on symmetric bilinear forms.  This will allow us to compute the operation $|BC_2|_{\Sph_{K(1)}}:\Sph_{K(1)} \to \Sph_{K(1)}$ in terms of a certain concrete map
\[
\QF(\FF_{\ell}) \to \QF^{BC_2}(\FF_{\ell}) \xrightarrow{\push{BC_2}} \QF(\FF_{\ell}).
\]
Here, $\QF^{BC_2}$ denotes the $C_2$-equivariant analog of $\QF$, and the first map regards a symmetric bilinear form as a $C_2$-equivariant symmetric bilinear form with respect to the trivial action.  

More precisely, for every commutative algebra $\cC$ in $p$-typically $m$-semiadditive $\infty$-categories (in the sense of \cite[Definition 3.1.1]{AmbiHeight}), we define a spectrum $\GW(\cC)$, the Grothendieck-Witt spectrum of $\cC$.  Moreover, 
we show that this construction is compatible with higher semiadditivity in the following sense (cf. \Cref{sect:prelim} for notations and \Cref{defn:GW} for a more precise statement):

\begin{thmx}[Higher semiadditive Grothendieck-Witt theory]\label{thm:GWintro}
Let $\cC$ be a commutative algebra in $p$-typical $m$-semiadditive $\infty$-categories.  Then $\GW(\cC)$ extends canonically to a functor
\[
\GW^{(-)}(\cC):\Span(\Sfin{m}{p})^{\op} \to \Sp
\]
such that $\GW^{(\pt)}(\cC) \simeq \GW(\cC)$.  Here, $\Sfin{m}{p}$ denotes $m$-truncated spaces whose homotopy groups are finite of $p$-power order.  
\end{thmx}

In the case when $\cC$ is the category of finite dimensional $\FF_{\ell}$-vector spaces and $m=1$, the value of the functor $\GW^{(-)}(\FF_{\ell}) := \GW^{(-)}(\cC)$ at a groupoid $BG$ is the Grothendieck-Witt spectrum of $G$-equivariant symmetric bilinear forms over $\FF_{\ell}$.  One can think of this theorem as endowing these various equivariant theories of symmetric bilinear forms with coherent restriction and transfer maps.  
On the other hand, since $\Sp_{K(1)}$ is $\infty$-semiadditive, any $K(1)$-local spectrum $X$ extends to an $m$-commutative monoid
\[
X^{(-)}: \Span(\Sfin{m}{p})^\op \to \Sp_{K(1)}
\]
given by $A \mapsto X^A.$  The following theorem asserts that, for $X = \Sph_{K(1)}$ (at the prime $p=2$), this $m$-commutative monoid structure can be understood in terms of the higher semiadditive Grothendieck-Witt theory of \Cref{thm:GWintro}.  

\begin{thmx}\label{thm:sk1GWintro}
Let $\ell\equiv 3,5\pmod{8}$ be a prime, let $\GW^{(-)}(\FF_{\ell})$ be as in \Cref{thm:GWintro} (with $p=2$), and let $\Sph_{K(1)}$ denote the $K(1)$-local sphere at the prime $2$.  
Then there is a canonical equivalence
    \[
    L_{K(1)} \GW^{(-)}(\FF_{\ell}) \simeq \Sph_{K(1)}^{(-)}
    \]
    of functors  $\Span(\Sfin{1}{2})^\op \to \Sp_{K(1)}$.  
\end{thmx}

In other words, Friedlander's equivalence $L_{K(1)} \GW(\FF_{\ell}) \simeq \Sph_{K(1)}$ extends to an equivalence of ($2$-typical) $1$-commutative monoids.  We prove \Cref{thm:sk1GWintro} in \Cref{sect:gwfield}, and use it to compute $|BC_2|_{\Sph_{K(1)}}$ in \Cref{card_BC_2}, thus finishing the proof of \Cref{thm:main}.

\subsubsection{Computational consequences in $\Sph_{K(1)}$}
As an application of \Cref{thm:main}, we finish the paper by computing certain well-known operations in $\pi_0\Sph_{K(1)}$: namely, the power operations $\theta$ (first defined by McClure \cite[\S IX]{Hinfty}, \cite{HopkinsK1}) and $\delta_p$ (appearing in the work of Schlank, Yanovski, and the first author \cite{TeleAmbi}), and the $K(1)$-local logarithm $\log_{K(1)}$ of Rezk \cite{RezkLog}.  

\begin{rem}
These operations are completely computed in the literature in the case of an odd prime $p$ (this is elementary for the power operations $\delta_p$ and $\theta_p$, and done by Rezk in the case of $\log_{K(1)}$).  In the case $p=2$, there are additional ambiguities related to the torsion in $\pi_0(\Sph_{K(1)})$, and the authors were not able to find a reference for complete computations of these operations, though they may be known to experts.  For the sake of completeness and convenience to the reader, we state the following results for all primes $p$.
\end{rem}

\begin{thm}\label{thm:intropowop}
Let $p$ be a prime and let $\Sph_{K(1)}$ denote the $K(1)$-local sphere at $p$.  Then for $p$ odd and $x\in \pi_0(\Sph_{K(1)}) \cong \mathbb{Z}_p$, we have 
\[
\delta_p(x) = \theta(x) = \frac{x-x^p}{p}.
\]
For $p=2$ and an element $r+d\varepsilon \in \pi_0(\Sph_{K(1)}) \cong \mathbb{Z}_2[\varepsilon]/(2\varepsilon, \varepsilon^2)$,
\[
\delta_2(r+d\varepsilon) = \theta(r+d\varepsilon) - d\varepsilon = \frac{r-r^2}{2} + rd\varepsilon.
\]
\end{thm}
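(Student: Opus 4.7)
The theorem computes two operations: the $K(1)$-local power operation $\theta$ of \cite{HopkinsK1} and the higher-semiadditive $\delta_p$ of \cite{TeleAmbi}. I would handle both by reducing to computations in $\pi_0 K$, using the cofiber sequence (\ref{eqn:cofibSK1}), and then pinning down the torsion ambiguity at $p=2$ using \Cref{thm:main}.

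For odd $p$, the map $\pi_0\Sph_{K(1)} \to \pi_0 KU^\wedge_p$ is an isomorphism $\mathbb{Z}_p \cong \mathbb{Z}_p$. Combining the defining relation $\psi^p(x) = x^p + p\theta(x)$ for the $K(1)$-local power operation with the triviality of $\psi^p$ on $\pi_0 KU^\wedge_p$ yields $\theta(x) = (x-x^p)/p$. The identification $\delta_p = \theta$ follows from the general principle of \cite{TeleAmbi} that $\delta_p$ coincides with the ring power operation $\theta$ whenever the ambient ring satisfies $|BC_p|=1$, which is the content of \Cref{thm:main} at odd primes.

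At $p = 2$, both $\theta$ and $\delta_2$ reduce to $(r-r^2)/2$ after projection to $\pi_0 KO^\wedge_2 \cong \mathbb{Z}_2$; what remains is to determine the $\varepsilon$-component of each. I would first compute $\delta_2$, using that its values are controlled by $|BC_2|_{\Sph_{K(1)}} = 1 + \varepsilon$ from \Cref{thm:main}, combined with its additivity and multiplicativity as a $\delta$-ring operation in the sense of \cite{TeleAmbi}. This propagates the value on the unit to the full formula $\delta_2(r + d\varepsilon) = (r-r^2)/2 + rd\varepsilon$ via the Joyal-style identities $\delta_2(x+y) = \delta_2(x)+\delta_2(y)-xy$ and $\delta_2(xy)=x^2\delta_2(y)+\delta_2(x)y^2+2\delta_2(x)\delta_2(y)$. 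Then $\theta$ is recovered via the comparison $\delta_2 - \theta = d\varepsilon$, which I would verify by unwinding the higher-semiadditive definition of $\delta_p$ relative to $\theta$ and matching the $\varepsilon$-correction term with $|BC_2|-1=\varepsilon$.

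The main obstacle is extracting the precise $\varepsilon$-correction term relating $\delta_2$ and $\theta$ from the higher-semiadditive framework. While at odd primes the two operations agree identically, at $p=2$ the torsion class $\varepsilon$ obstructs the classical power operation $\theta$ from satisfying the $\delta$-ring axioms on the nose; the discrepancy must be tracked carefully through the universal identities for $\delta_p$, and this is precisely where \Cref{thm:main} enters essentially, converting a higher-semiadditive invariant into an explicit $\varepsilon$-shift on generators.
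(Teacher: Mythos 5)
Your approach diverges from the paper's in both the odd-prime case and, more significantly, at $p=2$, and at $p=2$ there is a genuine gap.

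The paper's route is to compute the operation $\alpha_p$ of \cite[Definition 4.3.1]{TeleAmbi} first (Theorem \ref{alpha_trick}), using its functional equation together with the identity $\alpha_p(|A|) = |A\wr C_p|$ (Remark \ref{rem:alpha_cardinality}), then express $\delta_p(x) = |BC_p|x - \alpha_p(x)$ and prove the nontrivial formula $\theta(x) = \frac{\alpha_p(x) - |BC_p|x^p}{p-1}$ (Proposition \ref{theta_in_terms_of_alpha}, Corollary \ref{formula_theta}). You instead propose to compute $\theta$ via the Adams-operation relation in $\pi_0 K$ and $\delta_2$ via $\delta$-ring identities. The odd-prime half is fine (the relation $\psi^p(x) = x^p + p\theta(x)$ and $\psi^p = \mathrm{id}$ on $\pi_0 KU^{\wedge}_p$ do recover $\theta$, and the identity $\delta_p = \theta$ is then an easy algebraic consequence), though your claim of a "general principle" that $\delta_p = \theta$ whenever $|BC_p| = 1$ is not stated in \cite{TeleAmbi}; it only holds together with torsion-freeness of $\pi_0$.

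At $p=2$ the proposal fails to close. First, you invoke the multiplicativity identity $\delta_2(xy) = x^2\delta_2(y) + \delta_2(x)y^2 + 2\delta_2(x)\delta_2(y)$ as if it were known for $\delta_p$ in the higher-semiadditive framework, but the paper (and, to my knowledge, \cite{TeleAmbi}) only establishes the additivity relation, which descends from the functional equation for $\alpha_p$. More seriously, even granting both $\delta$-ring axioms, they do \emph{not} determine the operation on $\pi_0\Sph_{K(1)} \cong \mathbb{Z}_2[\varepsilon]/(\varepsilon^2, 2\varepsilon)$. The $\delta$-ring axioms force $\delta(1) = 0$ and hence $\delta(r) = \frac{r-r^2}{2}$ on $\mathbb{Z}_2$, and they force $\delta(\varepsilon) \in \{0,\varepsilon\}$, but both choices extend to valid $\delta$-structures on this ring; the two resulting operations are exactly $\delta_2$ (giving $\frac{r-r^2}{2} + rd\varepsilon$) and $\theta$ (giving $\frac{r-r^2}{2} + (r-1)d\varepsilon$). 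Knowing $|BC_2| = 1+\varepsilon$ alone cannot distinguish them: the crucial extra input, which your proposal never identifies, is the evaluation $\alpha_2(|BC_2|) = |B(C_2\wr C_2)| = 1+\varepsilon$, or equivalently $\delta_2(\varepsilon) = 0$, which in the paper comes from Remark \ref{rem:alpha_cardinality} combined with the cardinality formula of Theorem \ref{thm:main}. "Propagating the value on the unit" is not enough, since $\delta_2(1) = 0$ is automatic and carries no $\varepsilon$-information. Finally, the comparison $\delta_2 - \theta = d\varepsilon$ is asserted as something to be "verified by unwinding the higher-semiadditive definition," but the paper must do substantial work (a transpose/duality argument, base-change, and Fubini in Proposition \ref{theta_in_terms_of_alpha}) to relate $\theta$ to $\alpha_p$ and $|BC_p|$ before any such comparison becomes available.
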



\begin{thm}\label{thm:introlog}
Let $\log_p \colon 1+ p \ZZ_p \to \ZZ_p$ denote the $p$-adic logarithm, and consider the $K(1)$-local Rezk logarithm map $\log_{K(1)} : \gl_1\Sph_{K(1)} \to \Sph_{K(1)}$ on $\pi_0$.  
\begin{itemize}
    \item For an odd prime $p$, the Rezk logarithm is given on $\pi_0 (\gl_1\Sph_{K(1)}) \cong (\pi_0\Sph_{K(1)})^\times\cong \ZZ_p^\times$ by the formula 
\[
\log_{K(1)}(x) = \frac{1}{p}\log_p(x^{p-1}).
\]
    \item For $p=2$, the Rezk logarithm is given on $\pi_0 (\gl_1\Sph_{K(1)}) \cong(\pi_0\Sph_{K(1)})^{\times}
    \cong (\ZZ_2[\varepsilon]/(2\varepsilon, \varepsilon^2))^{\times} \cong \ZZ_2^\times\oplus \ZZ/2\ZZ \varepsilon$ by the formula
    \[
    \log_{K(1)}(r + d\varepsilon) = \frac{1}{2}\log_2(r) + \frac{r-1}{2} \varepsilon.
    \]
\end{itemize}
\end{thm}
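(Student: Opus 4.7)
For odd $p$, I will invoke Rezk's formula
\[
\log_{K(1)}(x) \;=\; \tfrac{1}{p}\log\!\bigl(x^{p}\,\psi^{p}(x)^{-1}\bigr)
\]
from \cite{RezkLog}, where $\psi^{p}$ is the $p$-th power operation on $\Sph_{K(1)}$. Since Adams operations act trivially on $\pi_{0}KU_{p}^{\wedge}\cong \ZZ_{p}$ and the ring map $\pi_{0}\Sph_{K(1)}\to \pi_{0}KU_{p}^{\wedge}$ is an isomorphism at odd primes, $\psi^{p}$ acts as the identity on $\pi_{0}\Sph_{K(1)}$. The formula then collapses to $\tfrac{1}{p}\log_{p}(x^{p-1})$, as claimed.

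For $p=2$ the strategy is to combine Rezk's formula -- reinterpreted through the power operations $\theta$ and $\delta_{2}$ of Theorem~\ref{thm:intropowop} -- with a generator-by-generator check. I decompose
\[
(\pi_{0}\Sph_{K(1)})^{\times} \;\cong\; \{\pm 1\}\,\oplus\,(1+4\ZZ_{2})\,\oplus\,(\ZZ/2)\!\cdot\!(1+\varepsilon),
\]
and verify the formula on the topological generators $5$, $-1$, and $1+\varepsilon$. For $r\in 1+4\ZZ_{2}$ the computation reduces to the $KO_{2}^{\wedge}$-case by naturality of $\log_{K(1)}$ along the $E_{\infty}$-ring map $\Sph_{K(1)}\to KO_{2}^{\wedge}$: the $\ZZ_{2}$-part becomes $\tfrac{1}{2}\log_{2}(r)$ and the $\varepsilon$-contribution vanishes, since $(r-1)/2\in 2\ZZ_{2}$.

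For the remaining generators $-1$ and $1+\varepsilon$, naturality with $KO_{2}^{\wedge}$ is blind to the $\varepsilon$-summand, and the decisive input is Rezk's formula applied directly on $\Sph_{K(1)}$, using $\theta$ and the relation $\delta_{2}=\theta+d\varepsilon$ from Theorem~\ref{thm:intropowop}. A direct expansion produces $\log_{K(1)}(-1)=\varepsilon$ and $\log_{K(1)}(1+\varepsilon)=0$; additivity of $\log_{K(1)}$ across the decomposition above then assembles these values into the claimed formula $\log_{K(1)}(r+d\varepsilon)=\tfrac{1}{2}\log_{2}(r)+\tfrac{r-1}{2}\varepsilon$.

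The main obstacle is pinning down the $\varepsilon$-component on $-1$. Because this component lies in the kernel of $\pi_{0}\Sph_{K(1)}\to \pi_{0}KO_{2}^{\wedge}$, it cannot be recovered by naturality alone; its identification requires the higher semiadditive structure on $\Sph_{K(1)}$ at $p=2$ -- concretely, the fact that $\delta_{2}$ and $\theta$ differ precisely in the $\varepsilon$-direction, which is the algebraic shadow of $|BC_{2}|_{\Sph_{K(1)}}=1+\varepsilon$ (Theorem~\ref{thm:main}). Once that $\varepsilon$-computation is carried out, extending to a formula in $r+d\varepsilon$ amounts to additivity and the clean vanishing $\log_{K(1)}(1+\varepsilon)=0$, which eliminates the $d$-dependence.
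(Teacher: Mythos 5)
Your odd-prime argument is correct and essentially recapitulates Rezk's own derivation, which the paper simply cites. For $p=2$, however, the proposal has a genuine gap at the decisive step.

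You claim that ``a direct expansion'' of Rezk's formula, using $\theta$ and the relation $\delta_2=\theta+d\varepsilon$, produces $\log_{K(1)}(-1)=\varepsilon$. It does not. Rezk's formula $\log_{K(1)}(x)=\tfrac1p\log\bigl(x^p/\psi^p(x)\bigr)$, with $\psi^p(x)=x^p+p\theta(x)$, is established in \cite{RezkLog} under a torsion-freeness hypothesis on $\pi_0$ that $\Sph_{K(1)}$ at $p=2$ fails, and at $p=2$ it is valid only modulo the $\varepsilon$-torsion --- exactly the caveat the paper records when introducing the theorem. Concretely, $\theta(-1)=-1$, so $\psi^2(-1)=1+2(-1)=-1$ and $x^2/\psi^2(x)=-1$; the rearranged series for $\tfrac12\log(-1)$ converges inside $\ZZ_2\subset\ZZ_2[\varepsilon]/(2\varepsilon,\varepsilon^2)$ and evaluates to $0$, not $\varepsilon$. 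Substituting $\delta_2$ for $\theta$ does not help: at $-1$ one has $d=0$, so $\delta_2(-1)=\theta(-1)$ and nothing changes. The $\varepsilon$-component at $-1$ is precisely the information Rezk's formula cannot see, and your argument supplies no mechanism for producing it. There is also a secondary gap: naturality along $\Sph_{K(1)}\to KO_2^\wedge$ determines only the $\ZZ_2$-component of $\log_{K(1)}$ on $1+4\ZZ_2$; the vanishing of its $\varepsilon$-component is not automatic either, since nonzero continuous homomorphisms $1+4\ZZ_2\to\ZZ/2\ZZ\,\varepsilon$ do exist.

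The paper closes both gaps by a different route: it cites Clausen's \cite[Prop.~1.10]{DustinLog}, which determines $\log_{K(1)}$ on all of $\ZZ_2^\times$ (including the $\varepsilon$-component and including $-1$), and then extends to $r+\varepsilon$ via the observation that $1+\varepsilon=|BC_2|$ is a \emph{strict} unit (\Cref{card_strict}, \Cref{card_log_van}) and that strict units are annihilated by the logarithm (\Cref{log_kills_strict_units}). With $\log_{K(1)}(1+\varepsilon)=0$ and the identity $(1+\varepsilon)(r+\varepsilon)=r$, additivity gives $\log_{K(1)}(r+\varepsilon)=\log_{K(1)}(r)$. You correctly identify that the $\varepsilon$-component is the crux and that it requires the higher-semiadditive input $|BC_2|=1+\varepsilon$, but what actually delivers it is the strict-unit argument, not a Rezk-type expansion.
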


This follows from work of Rezk \cite[Theorem 1.9]{RezkLog} when $p$ is odd and up to torsion when $p=2$; additionally, work of Clausen \cite{DustinLog} computes the $p=2$ case on the $\ZZ_2^{\times}$ component in the source.  We extend the result to all of $(\pi_0\Sph_{K(1)})^{\times}$ using \Cref{thm:main} and an observation of T. Schlank that $|BC_2|_{\Sph_{K(1)}}\in \left(\pi_0\Sph_{K(1)}\right)^{\times}$ is always a \emph{strict unit}, that is, the image of $1$ under a spectrum map $\ZZ \to \mathrm{gl}_1 \Sph_{K(1)}$.

\subsubsection*{Outline of paper}
In \S \ref{sect:prelim}, we review some basic notions about higher semiadditivity.  In \S \ref{sec:higher_QG}, we construct a higher semiadditive refinement of Grothendieck-Witt theory, proving \Cref{thm:GWintro}; this section is technical and the details of the proof are not used elsewhere in the paper.  In \S \ref{sect:gwfield}, we make the core argument of the paper and prove \Cref{thm:sk1GWintro}.  Finally, in \S \ref{sect:computations}, we complete the proof of \Cref{thm:main} and deduce computations of operations on the $K(1)$-local sphere. 

\subsubsection*{Acknowledgements}
The authors would like to thank Clark Barwick, Jacob Lurie, Tomer Schlank, and Lior Yanovski for many valuable discussions that led to this work.  They also thank Robert Burklund, Dustin Clausen, Peter Haine, Jeremy Hahn, Yonatan Harpaz,  and Hadrian Heine for helpful discussions and correspondences, and Robert Burklund, Dustin Clausen, Shay Ben Moshe, Tomer Schlank, and Lior Yanovski for comments on a draft.  Finally, they are grateful to the anonymous referees for their careful reading of this paper and wealth of clarifying comments and corrections.  
The first author was partially supported by the Adams Fellowship of the Israeli Academy of Science, and by the Danish National Research Foundation through the Copenhagen Centre for Geometry and Topology (DNRF151).
The second author was supported in part by NSF grant DMS-2002029.

\section{Higher commutative monoids and higher semiadditivity}\label{sect:prelim}



We saw in the introduction (cf. \Cref{intro:highercomm}) that the objects in a higher semiadditive $\infty$-category admit additional algebraic structure: in particular, for any $X$ in an $m$-semiadditive $\infty$-category and any $m$-finite space $A$, there is a natural map
\[
\push{A}: X^A \to X
\]
which can be thought of as implementing ``$A$-fold addition,'' by analogy to the case when $A$ is a finite set.  The $\push{A}$ maps, as $A$ ranges over $m$-finite spaces, fit together into an algebraic structure known as an $m$-commutative monoid, first formalized by Harpaz \cite{HarpazSpans}.  In this section, we define $m$-commutative monoids and their variants, and review some of the theory of higher semiadditive $\infty$-categories from this perspective.

\subsubsection{Higher commutative monoids}
For our applications to Grothendieck-Witt theory, it will be convenient to work in a $p$-typical setting, where all homotopy groups are $p$-groups:

\begin{defn}
We say that a $\pi$-finite space is $\tdef{$p$-typical}$ if all its homotopy groups are of $p$-power order. We denote by $\mdef{\Sfin{m}{p}}$ the $\infty$-category of  $m$-truncated $p$-typical $\pi$-finite spaces: that is, spaces $A$ with finitely many connected components, such that $\pi_i(A,a)$ is a finite $p$-group for all $i>0$ and $a\in A$, and vanishes for $i>m$. 
\end{defn}

The notion of a $p$-typical $m$-commutative monoid is defined using the $\infty$-category of spans $\Span(\Sfin{m}{p})$ \cite[\S 5]{Barwick}.  Informally, its objects are $m$-truncated $p$-typical $\pi$-finite spaces, and a morphism between two such spaces $A$ and $B$ is a third space $C\in \Sfin{m}{p}$ together with a pair of maps $A \leftarrow C \rightarrow B$. 

\begin{rem}
When $m=0$, $\Span(\Sfin{m}{p})$ is simply the $(2,1)$-category $\Span(\Fin)$ of spans of finite sets. 
\end{rem} 

It is a classical result that for a category $\cC$ with products, a commutative monoid in $\cC$ is a product preserving functor from $\Span(\Fin)^{\op}$ to $\cC$\footnote{See \cite{Cranch} for a detailed explanation and a generalization of this to the $\infty$-categorical setting.}.  By analogy, we have:

\begin{defn}[Definition 5.10 \cite{HarpazSpans}]\label{defn:cmon}
Let $\cC \in \Cat_\infty$. A $p$-typical \tdef{$m$-commutative pre-monoid} in $\cC$ is a functor $M\colon \Span(\Sfin{m}{p})^\op \to \cC$. We say that $M$ is a $p$-typical \tdef{$m$-commutative monoid} if it additionally satisfies the following Segal-type condition: 
\begin{itemize}
\item[($\ast$)] For every $A\in \Sfin{m}{p}$, the \mdef{Segal map} 
\[
\rho_A: M(A) \to \lim_A M(\pt)
\]
induced by the maps $
\{a^*\colon M(A) \to M(\pt)\}_{a\in A}
$, where $a\colon \pt \to A$ are the point embeddings, is an equivalence.
\end{itemize}
\end{defn}

We denote by 
\[
\mdef{\CMon{m}{p}(\cC)}\subseteq \mdef{\PMon{m}{p}(\cC)} := \Fun(\Span(\Sfin{m}{p})^\op,\cC)
\] the $\infty$-category of $p$-typical $m$-commutative pre-monoids and the full subcategory of $p$-typical $m$-commutative monoids in it.
If $\cC$ admits  $\Sfin{m}{p}$-limits, then, as in \cite[Proposition 5.14]{HarpazSpans}, we have 
\[
\CMon{m}{p}(\cC)\simeq \Fun^{\Sfin{m}{p}}(\Span(\Sfin{m}{p})^\op,\cC). 
\]
Here, $\Fun^{\Sfin{m}{p}}$ denotes functors which preserve limits indexed by spaces in $\Sfin{m}{p}$.

\begin{rem}

Informally, a $p$-typical $m$-commutative pre-monoid consists of restriction maps 
\[
    \mdef{f^*}\colon M(B)\to M(A)
\] 
for $f\colon A\to B \in \Sfin{m}{p}$, and push-forward, or integration, maps
\[
    \mdef{\push{f}} \colon M(A) \to M(B).
\]
The functoriality in spans encodes Fubini and base-change type compatibilities of the restriction and integration maps, as in \cite[Proposition 3.1.13 \& Corollary 3.1.14]{TeleAmbi}.  
In the special case where $M$ is a $p$-typical $m$-commutative monoid, and denoting $M(\pt)$ again by $M$, we can view the above as an integration of \emph{$M$-valued functions} on $A$ along the fibers of the map $f\colon A\to B$.
\end{rem}

\begin{rem}
We can view a $1$-commutative pre-monoid $M$ as a ($p$-typical) \emph{global equivariant object} in $\cC$ (in the sense of \cite{schwede2018global}) endowed with extra ``exotic transfer,'' or ``deflation'' maps along morphisms of groupoids with non-discrete fibers. 
In this language, the property of being a ($p$-typical) $m$-commutative monoid corresponds to the property of being Borel-complete. 
\end{rem}

\subsubsection{From higher commutative monoids to higher semiadditivity} \label{subsub:harpazwork}
By work of Harpaz, the notion of a higher semiadditive $\infty$-category can be formulated in terms of higher commutative monoids.  We review this formulation here, leading up to the key fact (\Cref{prop:highercommsemiadd}(3)) that objects in a ($p$-typical) $m$-semiadditive $\infty$-category admit a unique structure of a ($p$-typical) $m$-commutative monoid.  

Recall that $\cC$ is ($p$-typically) $m$-semiadditive if, for every map of spaces $f\colon A\to B$ with ($p$-typical) $m$-truncated $\pi$-finite fibers, the norm map \[
\Nm_f \colon f_! \to f_* 
\]
between functors $\cC^A\to \cC^B$
(which is defined inductively) 
is an isomorphism (\cite[\S 4]{HL},\cite[Definition 3.1.1]{AmbiHeight}).   For us, it will be convenient to use an alternative definition due to Yonatan Harpaz, that we shall now present. 

\begin{notation}
Denote by $\mdef{\Cat_{\Sfin{m}{p}}}$ the (non-full) subcategory of $\Cat_\infty$ consisting of small $\infty$-categories which admit $\Sfin{m}{p}$-colimits, and functors preserving $\Sfin{m}{p}$-colimits between them. 
We denote by $\mdef{\Catsa{m}{p}}\subseteq \Cat_{\Sfin{m}{p}}$ the full subcategory spanned by the  
$p$-typically $m$-semiadditive $\infty$-categories. 
\end{notation}

We will have to consider also the versions of these consisting of large $\infty$-categories. We will denote by 
$$\widehat{\Catsa{m}{p}} \subseteq \widehat{\Cat_{\Sfin{m}{p}}} \subseteq  \widehat{\Cat_{\infty}}$$
the $\infty$-categories of large $p$-typically $m$-semiadditive, large with $\Sfin{m}{p}$-colimits,  and large $\infty$-categories respectively. 

The $\infty$-category $\Cat_{\Sfin{m}{p}}$ admits a canonical symmetric monoidal structure via the Lurie tensor product (see \cite[Corollary 4.8.1.4]{HA}) which we denote by $\otimes$, with respect to which $\Sfin{m}{p}$ is the unit.  In this setting, Harpaz has given the following beautiful characterization of higher semiadditivity:

\begin{prop}\label{prop:highercommsemiadd} 
\hfill
\begin{enumerate}
\item The $\infty$-category $\Span(\Sfin{m}{p})$ is an idempotent algebra in $\Cat_{\Sfin{m}{p}}$.

\item The modules over $\Span(\Sfin{m}{p})$ in $\Cat_{\Sfin{m}{p}}$ are exactly the $p$-typically $m$-semiadditive $\infty$-categories; that is,  we have
\[
\Mod_{\Span(\Sfin{m}{p})}(\Cat_{\Sfin{m}{p}})
\simeq \Catsa{m}{p} \subseteq \Cat_{\Sfin{m}{p}}.\]

\item For a $p$-typically $m$-semiadditive $\infty$-category $\cC \in \Catsa{m}{p} $, the forgetful functor $\CMon{m}{p}(\cC) \to \cC$ is an equivalence.  
\end{enumerate}
\end{prop}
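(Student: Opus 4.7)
The plan is to prove part (3) first, from which (1) and (2) follow formally from the theory of idempotent algebras in symmetric monoidal $\infty$-categories.

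For part (3), the central task is to identify the functor $\cC \mapsto \cC \otimes \Span(\Sfin{m}{p})$ on $\Cat_{\Sfin{m}{p}}$ with $\cC \mapsto \CMon{m}{p}(\cC)$. I would first exhibit $\CMon{m}{p}$ as the left adjoint to the forgetful functor $\Catsa{m}{p} \hookrightarrow \Cat_{\Sfin{m}{p}}$: this requires showing that $\CMon{m}{p}(\cC)$ is itself $p$-typically $m$-semiadditive and that any $\Sfin{m}{p}$-cocontinuous functor from $\cC$ into an $m$-semiadditive $\cE$ extends essentially uniquely to $\CMon{m}{p}(\cC)$. Combining this with the universal property of the tensor product in $\Cat_{\Sfin{m}{p}}$, namely $\Fun^{\Sfin{m}{p}}(\cC \otimes \cD, \cE) \simeq \Fun^{\Sfin{m}{p}}(\cC, \Fun^{\Sfin{m}{p}}(\cD, \cE))$, and with the self-duality $\Span(\Sfin{m}{p})^\op \simeq \Span(\Sfin{m}{p})$, one finds that both $\cC \otimes \Span(\Sfin{m}{p})$ and $\CMon{m}{p}(\cC)$ corepresent the same functor on $m$-semiadditive targets. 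Once one checks that tensoring anything with $\Span(\Sfin{m}{p})$ lands in $m$-semiadditive objects (directly from the bilinear description of the tensor product), the identification extends to all $\cE \in \Cat_{\Sfin{m}{p}}$ by Yoneda.

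Given (3), part (1) follows by specializing to $\cC = \Span(\Sfin{m}{p})$ and using that $\Span(\Sfin{m}{p})$ is itself $m$-semiadditive, a statement provable by induction on $m$ via the self-duality of the span construction. One then has $\Span(\Sfin{m}{p}) \otimes \Span(\Sfin{m}{p}) \simeq \CMon{m}{p}(\Span(\Sfin{m}{p})) \simeq \Span(\Sfin{m}{p})$, and the multiplication is the identification, hence an equivalence. For part (2), I would invoke the general principle that modules over an idempotent algebra $R$ are precisely the objects $\cC$ for which the unit map $\cC \to \cC \otimes R$ is an equivalence. By (3), this unit map identifies with the canonical inclusion $\cC \to \CMon{m}{p}(\cC)$, which is an equivalence if and only if every object of $\cC$ admits a canonical $p$-typical $m$-commutative monoid structure. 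By the comparison recalled in \Cref{rem:nmmap} between Harpaz's formulation and the Hopkins-Lurie norm maps, this is precisely the condition of $p$-typical $m$-semiadditivity.

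The main obstacle is the concrete identification of the free $m$-semiadditive completion with $\CMon{m}{p}(\cC)$. Proving that $\CMon{m}{p}(\cC)$ is itself $m$-semiadditive requires a delicate inductive analysis of how the Segal-type condition interacts with $\Sfin{m}{p}$-colimits, and is the technical heart of Harpaz's framework in \cite{HarpazSpans}. Once this is in hand, the rest of the argument is formal.
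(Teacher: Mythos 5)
Your proposal correctly reconstructs the structure of Harpaz's argument from \cite[\S 5.1]{HarpazSpans}, which is what the paper itself relies on: the paper's proof consists entirely of the citation together with the remark that the $p$-typical variant is proved identically. The only quibble is your appeal to \Cref{rem:nmmap} in the final step of part (2): that remark merely records the two characterizations of $p$-typical $m$-semiadditivity (Hopkins--Lurie norm maps versus the module characterization) side by side without proving their equivalence --- that equivalence is itself one of the main results of \cite{HarpazSpans} --- so the identification of ``the forgetful map $\CMon{m}{p}(\cC)\to\cC$ is an equivalence'' with $m$-semiadditivity should cite Harpaz directly rather than the remark, to avoid circularity.
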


The non-$p$-typical case is discussed in \cite[\S 5.1 and Corollary 5.15]{HarpazSpans} and the proof for the $p$-typical case is identical\footnote{The necessary properties of the collection of $p$-typical $m$-finite spaces are that, like the collection of $m$-finite spaces, they are closed under finite limits and extensions.}.  Roughly, the first two parts of the proposition articulate a sense in which the $\infty$-category $$
\Span(\Sfin{m}{p}) \in \calg(\Cat_{\Sfin{m}{p}})$$
is universal among $p$-typically $m$-semiadditive $\infty$-categories. 

The third part will be most critical to our arguments: it asserts that every object of $\cC$ admits a unique structure of a $p$-typical $m$-commutative monoid in $\cC$.  We shall use the following notation for this unique structure: 



\begin{defn}\label{defn:canhighercm}
Let $\cC \in \Catsa{m}{p}$. For   $X\in\cC$ we denote by $\mdef{X^{(-)}}\in \CMon{m}{p}(\cC)$ the unique $p$-typical $m$-commutative monoid in $\cC$ whose underlying object is $X.$
\end{defn}

\begin{example}\label{ex:sk1cmon}
    The $\infty$-categories $\Sp_{K(n)}$ and $\Sp_{T(n)}$ are $\infty$-semiadditive (i.e., $m$-semiadditive for every $m\geq 0$) \cite{HL, TeleAmbi}. We obtain higher commutative monoids
\[
    \Sph_{K(n)}^{(-)}\in \CMon{m}{p}(\Sp_{K(n)}), \hspace{10pt} \Sph_{T(n)}^{(-)}\in \CMon{m}{p}(\Sp_{T(n)}).
\]
    This paper is dedicated to studying these higher commutative monoids in the case $n=1$ (in which case they are same because the telescope conjecture holds at height 1, see, e.g., \cite[Theorem 4.1]{Bousfield}). 
\end{example}

\subsubsection{Cardinalities}

We saw in the introduction that if $M$ is an $m$-commutative monoid and $A$ is an $m$-finite space, then there is a natural endomorphism $|A|_M :M \to M$ known as the cardinality of $|A|$ (at $M$).  In fact, one can give this definition more generally for a $p$-typical $m$-commutative pre-monoid:

\begin{defn}(see \cite[Definition 2.1.5]{AmbiHeight})\label{defn:card}
    Let $\cC\in \Cat_\infty$ and let $M\in \PMon{m}{p}(\cC)$. For $A\in \Sfin{m}{p}$ we define the map  
\[
    \mdef{|A|}_M\colon M(\pt) \oto{\pi^*} M(A) \oto{\push{\pi}} M(\pt),
\] 
where $\pi\colon A\to \pt$ is the terminal map.  In other words, $|A|_M$ is the image of $(\pt \xleftarrow{\pi} A\xrightarrow{\pi} \pt)$.  
We refer to $|A|_M$ as the \tdef{cardinality} of $A$ at $M$.
\end{defn}
The cardinalities of $A$ on the various $M$'s assemble to a natural endomorphism of the functor $\ev_\pt \colon \PMon{m}{p}(\cC)\to \cC$ which evaluates at $\pt \in \Span(\Sfin{m}{p})$. We shall denote this natural transformation simply by $|A|$.
In particular, if $\cC$ is $p$-typically $m$-semiadditive, we can restrict $|A|$ to a natural transformation of the identity functor of $\cC\simeq \CMon{m}{p}(\cC)$. These natural transformations are studied systematically in \cite{AmbiHeight} -- in particular, one can show (see \cite[Remark 2.1.10(2)]{AmbiHeight}) that if $\cC$ is further \emph{symmetric monoidal} and its tensor product distributes over $\Sfin{m}{p}$-colimits, then $|A|$, as a natural transformation of the identity functor of $\cC$, is given by multiplication with the element $|A|_{\one_\cC}\in \pi_0\one_\cC$.

\section{Higher semiadditive Grothendieck-Witt theory} 
\label{sec:higher_QG}


Let $\Sph_{K(1)}$ denote the $K(1)$-local sphere at the prime $2$. Fix a prime $\ell$ congruent to $3$ or $5$ modulo $8$. 
The main construction of this paper is a model for the $2$-typical $1$-commutative
monoid 
\[
\Sph_{K(1)}^{(-)} \in \CMon{1}{2}(\Sp_{K(1)})\simeq \Sp_{K(1)}
\] 
as the $K(1)$-localization of a $2$-typical $1$-commutative pre-monoid $\GW^{(-)}(\FF_\ell) \in \PMon{1}{2}(\Sp)$, whose underlying spectrum is the \emph{Grothendieck-Witt} spectrum $\GW(\FF_\ell)$. 

Recall that for a commutative ring $R$, the connective spectrum $\GW(R)$ is the group completion of the commutative monoid of symmetric non-degenerate bilinear forms over projective $R$-modules. We refer the reader to \cite[\S 4]{nine-author2} for the precise definition and an extensive discussion of Grothendieck-Witt theory.  

\begin{rem}
We remark that we will exclusively consider symmetric bilinear forms over rings in which $2$ is invertible, where symmetric bilinear and quadratic forms are interchangeable.  
\end{rem}





The construction $R\mapsto \GW(R)$ depends only the symmetric monoidal $\infty$-category of $R$-modules. More generally, if $\cC$ is a semiadditive symmetric monoidal $\infty$-category in which the tensor product distributes over coproducts, we can define the connective spectrum $\GW(\cC)$ as the group completion of the monoid of dualizable objects in $\cC$ endowed with a non-degenerate symmetric bilinear form. 

In this section, we enhance this construction to take higher semiadditivity into account: in particular, we show that if $\cC$ is a $p$-typical $m$-semiadditively symmetric monoidal $\infty$-category (i.e.,  $\cC \in \calg(\Catsa{m}{p})$), then $\GW(\cC)$ naturally acquires the structure of a $p$-typical $m$-commutative pre-monoid \[
\GW^{(-)}(\cC): \Span(\Sfin{m}{p})^{\op} \to \Sp.
\]
Informally, the construction consists of the following data: for $A\in \Sfin{m}{p}$, we set
\[
\GW^A(\cC) := \GW(\cC^A).
\] 
Then, for every map $f\colon A\to B\in \Sfin{m}{p}$, we provide maps 
\[
f^*\colon \GW^B(\cC) \to \GW^A(\cC)
\]
and
\[
\push{f} \colon \GW^A(\cC) \to \GW^B(\cC).
\]
Ignoring the symmetric bilinear forms, these maps are given by pre-composition with $f$ and left Kan extension along $f$, respectively.
For a symmetric bilinear form $b\colon X\otimes X \to \one$ on a dualizable object $X\in \cC^B$, we set 
\[
f^*b\colon f^*X \otimes f^*X\simeq f^*(X\otimes X)\oto{f^*b} f^*\one_B\simeq \one_A. 
\]
To specify $\push{f}$ on a symmetric bilinear form $b$, it will be easiest to specify its mate. 

\begin{constr}
\label{def_push_quadratic_form}
    Let $f\colon A\to B\in \Sfin{m}{p}$,
    let $\cC\in \calg(\Catsa{m}{p})$, and let $(X,b)\in \QF(\cC^A)$.  Then $b$ determines a map 
    \[
    b^\vee \colon X\xrightarrow{\simeq} \Du(X),
    \]
which we call its \emph{mate}, where $\Du(X)$ denotes the (symmetric monoidal) dual of the dualizable object $X$.  Note that $b^\vee$ is an equivalence because $b$ is nondegenerate.  We then define the bilinear form 
\[
   \mdef{\push{f} b} \colon f_!X \otimes f_!X \to \one_B
\]
to be the mate of the composite 
\[
f_!(X) \oto{f_!b^\vee} f_!(\Du(X))\oto{\Nm_f} f_* (\Du(X)) \simeq \Du(f_!(X)). 
\]
\end{constr}

Although it is not clear from this definition, it is a consequence of our results that this procedure defines a \emph{symmetric} bilinear form. 
Our goal for the rest of this section is to show that these constructions, currently given at the level of individual maps in $\Span(\Sfin{m}{p})$, assemble into a functor $\GW^{(-)}\colon \Span(\Sfin{m}{p})^\op \to \Sp$.

\subsection{Semiadditive anti-involutions}

Let $\cC$ be a symmetric monoidal $\infty$-category.  Our goal in this section will be to construct a space $\QF(\cC)$ of non-degenerate symmetric bilinear forms in $\cC$, and to explain the compatibility of this construction with higher semiadditivity and the multiplicative structure on $\cC$.  


We saw above that the data of a symmetric bilinear form $b$ on a dualizable object $X\in \cC$ can be encoded via its mate 
$
b^\vee \colon X\to \Du(X).
$  
In \Cref{subsub:antiinvol}, we construct a categorical framework for organizing such data.  
In fact, we will work in a more general setting where $\Du$ is replaced by any \emph{anti-involution} $\inv : \cC \iso \cC^{\op}$ of $\cC$ (cf. \Cref{defn:antiinvol}). Given an anti-involution $\inv$ on $\cC$, we will construct its $\infty$-category of \emph{fixed points} $\Phi(\cC, \inv)$: informally, an object of $\Phi(\cC, \inv)$ is the data of an object $X \in \cC$ together with an equivalence $X \simeq \inv (X)$.  We then construct $\QF(\cC)$ in the above setting by taking $\inv$ to be the anti-involution given by formation of symmetric monoidal duals (\Cref{ex:QF_as_fixed_points}).

In \Cref{subsub:ambiinvol}, we study the interaction of this construction with semiadditivity.  In particular, we will see (\Cref{thm:semifixedpoint}) that if $\cC$ is additionally $p$-typically $m$-semiadditive, then its $\infty$-category of fixed points $\Phi(\cC, \inv)$ acquires the structure of a $p$-typical $m$-commutative monoid in $\Cat_{\infty}$.  

\subsubsection{Anti-involutions and their fixed points}
\label{subsub:antiinvol}
Given an $\infty$-category $\cC$, one can pass to its opposite $\infty$-category $\cC^{\op}$.  
The assignment $\cC \mapsto \cC^{\op}$ defines an action of $C_2$ on the $\infty$-category $\Cat_{\infty}$.  

\begin{notation}
Hereafter, we regard $\Cat_{\infty}$ as equipped with this $C_2$ action; for instance, $\Cat_{\infty}^{hC_2}$ will denote homotopy fixed points with respect to this action.  We will also have reason to consider the $\infty$-category of functors from $BC_2$ to $\Cat_{\infty}$, and we will denote this functor category by $\Cat_{\infty}^{BC_2}$.  
\end{notation}

\begin{defn}\label{defn:antiinvol}
An \tdef{$\infty$-category with anti-involution} is an object of $\Cat_\infty^{hC_2}$. For an $\infty$-category $\cC$, an anti-involution on $\cC$ is a refinement of $\cC\in \Cat_{\infty}$ to an object of $\Cat_\infty^{hC_2}$.  
\end{defn}

Concretely, an anti-involution on $\cC$ is an anti-equivalence $\inv\colon \cC \iso \cC^\op$ together with a natural isomorphism $\inv^2 \iso \Id$ as well as higher coherence data.  We will primarily be interested in the following prototypical example:

\begin{example}
\label{duality_anti_involution}
Let $\cC$ be a symmetric monoidal $\infty$-category.
Then the full subcategory $\cC^\dbl \subseteq \cC$ spanned by the dualizable objects admits a canonical anti-involution $\Du\colon \cC^\dbl \iso (\cC^\dbl)^\op,$ given by taking the symmetric monoidal dual. This construction determines a functor 
\[
\mdef{(-)^\dbl} \colon \calg(\Cat_\infty) \to \Cat_\infty^{hC_2} 
\]
(cf. \cite[Theorem 5.11]{HeineLopezSpitzweck}).
\end{example}

We shall now construct the fixed points of an $\infty$-category with anti-involution. Recall that given an $\infty$-category $\cC$, one can consider its \emph{twisted arrow $\infty$-category} $\Tw (\cC)$: informally, the objects of $\Tw(\cC)$ are given by arrows $(X\to Y)$ in $\cC$ and the morphisms from $(X\to Y)$ to $(X'\to Y')$ can be thought of as commutative squares
\[
\begin{tikzcd}
X \arrow[r] \arrow[d] & Y \\
X'\arrow[r] & Y'\arrow[u].
\end{tikzcd}
\]
One can construct $\Tw$ more formally as follows: let $\Delta$ denote the category of non-empty linearly ordered finite sets.  The complete Segal space construction of Rezk \cite{Rezk} determines a fully faithful embedding $\Cat_\infty \into \Spc^{\Delta^{\op}}$ as presheaves of spaces on $\Delta$ satisfying a certain ``complete Segal'' condition \cite[Corollary 4.3.16]{LurGood}. 

The $\infty$-category $\Delta$ has an involution 
\[
(-)^\rev \colon \Delta \to \Delta,
\]
which reverses the order of a totally ordered set, and precomposition with $(-)^\rev$ induces the $C_2$ action $(-)^\op$ of $\Cat_\infty$ (cf. \cite[\S 1.2.1]{HTT}).  Additionally, the category $\Delta$ admits a monoidal structure by concatenation of linearly ordered sets, which we will denote by $\star$.  One can then consider the functor 
\[
\Tw\colon \Spc^{\Delta^\op} \to
\Spc^{\Delta^\op}
\] 
given by
\[
\Tw(\cC)(I)=\cC(I\star I^\rev).
\]
This functor takes complete Segal spaces to complete Segal spaces, so it restricts to an endofunctor of $\Cat_\infty$ (c.f.  \cite[Proposition A.2.3]{ShiftedCo}): we denote this endofunctor also by 
\[\Tw\colon \Cat_{\infty} \to \Cat_{\infty}.\]

When the $\infty$-category $\cC$ additionally comes equipped with an anti-involution $\inv$, the $\infty$-category $\Tw(\cC)$ acquires a natural $C_2$ action given on objects by 
\[
(X\oto{f} Y) \mapsto (\inv (Y) \oto{\inv (f)} \inv (X)).
\]
Moreover, this procedure is functorial:

\begin{lem}
The functor $\Tw \colon  \Cat_\infty\to \Cat_\infty$ lifts to a functor 
\[
\Tw\colon \Cat_\infty^{hC_2}\to \Cat_\infty^{BC_2}
\]
together with a $C_2$-equivariant natural transformation $\Tw(\cC) \to \cC^{\op}\times \cC$.  Here, the $C_2$ action on the target is given by $(X,Y) \mapsto (\inv (Y), \inv(X))$.  
\end{lem}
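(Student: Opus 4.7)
The plan is to build $\Tw$ combinatorially via the complete Segal space model, exploiting the canonical $C_2$-symmetry of the object $I \star I^{\rev} \in \Delta$. Write $\iota \colon \Delta \to \Delta$ for the functor $I \mapsto I \star I^{\rev}$. The key combinatorial observation is that the natural identity $(A \star B)^{\rev} \simeq B^{\rev} \star A^{\rev}$, applied to $B = I^{\rev}$, provides a canonical equivalence $(I \star I^{\rev})^{\rev} \simeq I \star I^{\rev}$, natural in $I$. This promotes $\iota$ to a $C_2$-equivariant functor from $\Delta$ (equipped with trivial $C_2$-action) to $\Delta$ (equipped with the $(-)^{\rev}$-action). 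Since this identity holds strictly at the level of ordinary $1$-categories, it can be transported without difficulty to coherent $\infty$-categorical data.

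Under the complete Segal space model, $\Tw$ is precomposition with $\iota^{\op}$. The key formal input is that a $C_2$-equivariant functor whose source and target both carry trivial $C_2$-action is the same thing as a $C_2$-action on the underlying functor. Given $\cC \in \Cat_\infty^{hC_2}$, presented as a $C_2$-equivariant functor $\cC \colon \Delta^{\op,\rev}\to \Spc^{\mathrm{triv}}$, the composite $\cC \circ \iota^{\op}$ is a composition of two $C_2$-equivariant functors and is itself $C_2$-equivariant as a functor $\Delta^{\op,\mathrm{triv}} \to \Spc^{\mathrm{triv}}$. This $C_2$-equivariance unpacks to a $C_2$-action on $\Tw(\cC)$ viewed as an object of $\Fun(\Delta^{\op},\Spc)$. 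Combined with preservation of completeness and the Segal condition under $\iota^{\op}$-precomposition (\cite[Proposition A.2.3]{ShiftedCo}), this yields the required functor $\Tw \colon \Cat_\infty^{hC_2} \to \Cat_\infty^{BC_2}$.

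For the natural transformation to $\cC^{\op} \times \cC$, use the inclusions $I \hookrightarrow I \star I^{\rev}$ and $I^{\rev} \hookrightarrow I \star I^{\rev}$ as the two halves of the concatenation. These assemble into a natural map $I \sqcup I^{\rev} \to \iota(I)$ which is $C_2$-equivariant when $I \sqcup I^{\rev}$ is equipped with the involution swapping the summands together with $(-)^{\rev}$. Applying $\cC$ yields a natural transformation $\Tw(\cC)(I) \to \cC(I) \times \cC(I^{\rev}) = \cC(I) \times \cC^{\op}(I)$. The anti-involution $\inv$ translates the $(-)^{\rev}$-action to the action $(X,Y) \mapsto (\inv Y, \inv X)$ on $\cC^{\op} \times \cC$, producing the required $C_2$-equivariance.

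The main obstacle will be the careful $\infty$-categorical bookkeeping: in particular, packaging ``the $C_2$-equivariance of a composition of $C_2$-equivariant functors'' in a way that produces a genuine object of $\Cat_\infty^{BC_2}$ and is functorial in the input $\cC \in \Cat_\infty^{hC_2}$. This can be handled either by working with the cocartesian fibration $\underline{\Cat}_\infty \to BC_2$ classifying the $(-)^{\op}$-action and checking that the construction defines a map of such fibrations, or by strictifying the entire argument at the level of simplicial objects in the $1$-category $\Delta$ before passing to $\Cat_\infty$.
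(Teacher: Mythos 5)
Your proof is correct and follows essentially the same approach as the paper: both exploit the natural isomorphism $(I\star I^{\rev})^{\rev}\cong I\star I^{\rev}$ in $\Delta$ to show that $\Tw$ intertwines the $(-)^{\op}$-involution with the trivial involution, and both produce the equivariant forgetful map from the two embeddings $I\hookrightarrow I\star I^{\rev}$ and $I^{\rev}\hookrightarrow I\star I^{\rev}$. Your write-up simply spells out the bookkeeping (on equivariance of composites of equivariant functors) that the paper treats as routine.
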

\begin{proof}

To demonstrate the lemma, we note that there is a natural isomorphism $(I\star I^\rev)^\rev \cong I\star I^\rev$ in the ordinary category $\Delta$ and hence $\Tw$ intertwines the involution $(-)^\op$ of $\Cat_{\infty}$ with the trivial involution of $\Cat_\infty$. Consequently, it induces a functor between the corresponding fixed points
\[
\Tw\colon \Cat_\infty^{hC_2}\to \Cat_\infty^{BC_2}.\]
Moreover,  the forgetful map $\Tw(\cC) \to \cC^\op \times \cC$ is induced from the natural embeddings $I\into I\star I^\rev$ and $I^\rev \into I\star I^\rev$, and so it is visibly equivariant.  
\end{proof}

Let $(\cC,\inv) \in \Cat_\infty^{hC_2}$ be an $\infty$-category with an anti-involution.  Then the above lemma produces a natural $C_2$-equivariant functor \[
\Tw(\cC) \to \cC \times \cC^\op.
\]
There is also a natural functor
\[
\cC \oto{(\Id,\inv)} \cC \times \cC^\op 
\] 
which is equivariant when the source, $\cC$, is endowed with the \emph{trivial} $C_2$-action. 

\begin{defn}
\label{fixed_points_anti_involution}
We define the functor 
\[
    \mdef{\WInvF} \colon  \Cat_{\infty}^{hC_2}\to \Cat_\infty^{BC_2}
\]
by the formula
\[
    \WInvF(\cC,\inv)= \cC\times_{\cC \times \cC^\op} \Tw(\cC), 
\]
so that an object of $\WInvF(\cC,\inv)$ is a morphism $\alpha \colon X\to \inv(X)$ in $\cC$, and a morphism between $(X,\alpha)$ and $(Y,\beta)$ is a commutative diagram of the form: 
\[
\xymatrix{
X \ar^f[r]\ar^\alpha[d]       & Y \ar^\beta[d] \\ 
\inv(X) & \inv(Y) \ar^{\inv(f)}[l].
}
\]
We also define the functor
\[
\mdef{\InvF} \colon \Cat_{\infty}^{hC_2}\to \Cat_\infty^{BC_2}
\]
to be the full sub-functor
$
\InvF(\cC,\inv) \subseteq  
\WInvF(\cC,\inv)
$
spanned by the isomorphisms $
X\iso \inv(X).$  
\end{defn}
We may think of $\WInvF(\cC,\inv)$ and $\InvF(\cC,\inv)$ as the ``lax fixed points'' and ``fixed points,'' respectively, of $\inv$ acting on $\cC$.  While we are primarily interested in $\InvF$, it will be useful to treat these two functors in parallel. 


\begin{rem}
For the purposes of this paper, it is enough to consider the composition
\begin{equation} \label{simpler_Phi}
\Cat_\infty^{hC_2}\oto{\InvF} \Cat_\infty^{BC_2} \oto{(-)^\simeq} \Spc^{BC_2}
\end{equation}
where $(-)^{\simeq}$ denotes the maximal subgroupoid functor.  
This composition has a simpler description\footnote{We are grateful to the anonymous referee for pointing out this simplification.} which does not involve the construction $\InvF$. 
Namely, the non-trivial action of $C_2$ on $\Cat_\infty$ (given by $(-)^{\op}$) restricts to a trivial action of $C_2$ on $\Spc$. Hence, the functor $(-)^\simeq \colon \Cat_\infty\to \Spc$, being right adjoint to the inclusion $\Spc \into \Cat_\infty$, promotes to a $C_2$-equivariant functor, where the action on the target is trivial. Taking $C_2$-fixed points, we get a functor $\Cat_\infty^{hC_2} \to \Spc^{BC_2}$. Unwinding the definitions, this functor is exactly the composition (\ref{simpler_Phi}).  Nevertheless, we shall discuss the functor $\InvF$ as it constructs a \emph{category} of nondegenerate symmetric bilinear forms, and not just a space.
\end{rem}

Using this language, we can now define the space of objects equipped with a symmetric bilinear form in $\cC$ as follows. 

\begin{defn}
\label{ex:QF_as_fixed_points}
For a symmetric monoidal $\infty$-category $\cC\in \calg(\Cat_\infty)$, we define the space $\QF(\cC)$ by the formula 
\[
\QF(\cC)\simeq (\InvF(\cC^\dbl,\Du)^{hC_2})^{\simeq},
\]
\end{defn}
where $(-)^{\simeq}$ refers to the maximal subgroupoid. By definition, an object of $\InvF(\cC^\dbl,\Du)$ is a dualizable object $X\in \cC$ together with an isomorphism $X\iso \Du(X)$, which corresponds to a non-degenerate bilinear map $X\otimes X \to \one$. The group $C_2$ acts by swapping the two tensor factors, and so its fixed points are symmetric bilinear maps.

\subsubsection{Anti-involutions and semi-additivity}\label{subsub:ambiinvol}

Our aim now is to promote $\InvF(\cC,\inv)$ and $\WInvF(\cC,\inv)$
to $p$-typical $m$-commutative monoids when $\cC$ is $p$-typically $m$-semiadditive. 
This will essentially follow from the facts that these functors are compatible with the formation of $p$-typical $m$-commutative monoids (\Cref{fixed_anti_inv_limits}), and that every object of $(\Catsa{m}{p})^{hC_2}$ admits a canonical such structure (\Cref{semi_add_with_anti_semi_add}). 

To see that $\InvF$ and $\WInvF$ induce functors on $m$-commutative monoids, we show:
\begin{lem}
\label{fixed_anti_inv_limits}
The functors $\InvF\colon \Cat_\infty^{hC_2} \to \Cat_\infty^{BC_2}$ and $\WInvF\colon \Cat_\infty^{hC_2} \to \Cat_\infty^{BC_2}$ are limit preserving.
\end{lem}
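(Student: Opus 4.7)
The plan is to reduce limit-preservation to the underlying $\infty$-category level and then verify it there using the explicit pullback description of $\WInvF$. Since limits in $\Cat_\infty^{BC_2} = \Fun(BC_2, \Cat_\infty)$ are computed pointwise, the evaluation-at-basepoint functor $\Cat_\infty^{BC_2} \to \Cat_\infty$ is both limit-preserving and conservative; similarly, the forgetful functor $\Cat_\infty^{hC_2} \to \Cat_\infty$ preserves limits, as homotopy fixed points are themselves a kind of limit. Thus it suffices to check that the underlying-$\infty$-category functors $\Cat_\infty^{hC_2} \to \Cat_\infty$ induced by $\WInvF$ and $\InvF$ preserve limits.

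For $\WInvF$, its underlying $\infty$-category is the pullback $\cC \times_{\cC \times \cC^{\op}} \Tw(\cC)$ in $\Cat_\infty$. Each of the three functors of $(\cC,\inv)$ feeding into this pullback preserves limits: the assignment $(\cC,\inv) \mapsto \cC \times \cC^{\op}$ is built from the limit-preserving operations of product and $(-)^{\op}$; the assignment $(\cC,\inv) \mapsto \cC$ is merely the forgetful functor. For $\Tw$, recall from the paper's setup that it is defined by precomposition with the endofunctor $I \mapsto I \star I^{\rev}$ in the presheaf $\infty$-category $\Spc^{\Delta^{\op}}$. Precomposition preserves all limits, and since the inclusion $\Cat_\infty \into \Spc^{\Delta^{\op}}$ preserves limits (being a right adjoint), $\Tw \colon \Cat_\infty \to \Cat_\infty$ preserves limits as well. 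Because pullbacks commute with limits, $\WInvF$ preserves limits.

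For $\InvF$, I would argue that a compatible family $\{X_i \to \inv_i(X_i)\}_i$ assembling to an object of $\WInvF(\lim \cC_i, \lim \inv_i) \simeq \lim \WInvF(\cC_i, \inv_i)$ consists of isomorphisms in $\lim \cC_i$ if and only if each individual morphism is an isomorphism in the corresponding $\cC_i$. This is a standard fact about limits of $\infty$-categories: invertibility in the limit is detected levelwise. Consequently, the full subcategory of isomorphisms in the limit equals the limit of the full subcategories of isomorphisms, so $\InvF(\lim \cC_i, \lim \inv_i) \simeq \lim \InvF(\cC_i, \inv_i)$.

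The main subtle point lies in confirming that all the above constructions are properly $C_2$-equivariant: the pullback defining $\WInvF$ must be taken in $\Cat_\infty^{BC_2}$, with the $C_2$-actions on the three vertices supplied by the preceding lemma, and the subcategory-of-isomorphisms condition must be verified compatibly with the action. Once the pointwise description of limits in $\Cat_\infty^{BC_2}$ is invoked, however, the equivariant structure is automatically compatible with limits and introduces no additional obstruction beyond the unequivariant argument outlined above.
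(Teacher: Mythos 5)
Your proof is correct and takes essentially the same approach as the paper: reduce to the non-equivariant case via conservativity and limit-preservation of evaluation at the basepoint, check the three legs of the pullback defining $\WInvF$ (with $\Tw$ preserving limits via precomposition in $\Spc^{\Delta^{\op}}$ and limit-preservation of the complete Segal space embedding), and then deduce the case of $\InvF$ from the fact that isomorphisms in a limit of $\infty$-categories are detected levelwise.
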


\begin{proof}
First, since the evaluation at the base-point functor $\Cat_\infty^{BC_2}\to \Cat_\infty$ is conservative and limit preserving, it suffices to verify the claim non-equivariantly. 

To see that $\WInvF$ is limit preserving, it suffices to see that each of the three functors in the pullback
\[
\WInvF(\cC,\inv) = \cC \times_{\cC\times \cC^\op} \Tw(\cC)
\]
preserves limits.  
The functors $\cC \mapsto \cC$ and $\cC \mapsto \cC \times \cC^\op$ are clearly limit preserving. The functor $\cC \mapsto \Tw(\cC)$ is limit preserving since it is given by pre-composition with a map $\Delta \to \Delta$ on $\Spc^{\Delta^\op}$, and the fully faithful embedding $\Cat_\infty \to \Spc^{\Delta^\op}$ is conservative and limit preserving.

To deduce that $\InvF$ is limit preserving, it is enough to show that, for every (small) $\infty$-category $I$ and every $I$-shaped diagram $\{(\cC_a,\inv_a)\}_{a\in I}$ in $\Cat_\infty^{hC_2}$, the equivalence 
\[
\WInvF(\invlim_{a\in I} (\cC_a,\inv_a)) \simeq \invlim_{a\in I} \WInvF((\cC_a,\inv_a))
\] 
carries the full subcategory $\InvF(\invlim_{a\in I}(\cC_a,\inv_a))\subseteq \WInvF(\invlim_{a\in I}(\cC_a,\inv_a))$ to 
$\invlim_{a\in I} \InvF(\cC_a,\inv_a)$. Unwinding the definitions, this reduces to the fact that for every diagram $I\to \Cat_\infty^{hC_2}$, a map
\[
\{X_a\}_{a\in I} \oto{\{\phi_a\}_{a\in I}} \{\inv_a(X_a)\}_{a\in I} \qin \invlim_{a\in I} \cC_a 
\]
is an isomorphism if and only if each $\phi_a$ is an isomorphism. 

\end{proof}

We now turn to the second ingredient in the refinement of $\InvF$: the higher commutative monoid structures on those $(\cC,\inv)\in \Cat_\infty^{hC_2}$ for which $\cC$ is $p$-typically $m$-semiadditive. 
In fact, we shall package such pairs in a $p$-typically $m$-semiadditive $\infty$-category. 
\begin{prop}\label{semiadd_op}
The involution $(-)^\op$ of $\Cat_\infty$ restricts to an involution of the (non-full) subcategory $\Catsa{m}{p}$ of $\Cat_\infty$.
\end{prop}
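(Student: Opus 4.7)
The plan is to verify two things, from which the conclusion is automatic (since $(-)^{\op}$ is already an involution of $\Cat_\infty$, any restriction to a subcategory it preserves is again an involution): first, that $\cC \in \Catsa{m}{p}$ implies $\cC^{\op} \in \Catsa{m}{p}$; and second, that a functor $F \colon \cC \to \cD$ preserving $\Sfin{m}{p}$-colimits between $m$-semiadditive categories induces $F^{\op} \colon \cC^{\op} \to \cD^{\op}$ that also preserves $\Sfin{m}{p}$-colimits.

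For the object-level claim, I would start from the Hopkins--Lurie characterization recalled in \Cref{rem:nmmap}: $\cC$ is $p$-typically $m$-semiadditive iff, for every $f \colon A \to B$ with $m$-truncated $p$-typical $\pi$-finite fibers, the pullback $f^* \colon \cC^B \to \cC^A$ admits both adjoints $f_!$ and $f_*$, and the canonical norm map $\Nm_f \colon f_! \to f_*$ is an equivalence. In $\cC^{\op}$, the pullback $(f^*)^{\op}$ admits $(f_*)^{\op}$ as a left adjoint and $(f_!)^{\op}$ as a right adjoint, so the needed adjunctions exist. It remains to show that the norm map in $\cC^{\op}$ is still an equivalence.

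Rather than verify by hand that the inductive Hopkins--Lurie construction of $\Nm$ is self-dual under $(-)^{\op}$, I would invoke Harpaz's characterization \Cref{prop:highercommsemiadd}: being $p$-typically $m$-semiadditive is the \emph{property} that the canonical $\Sfin{m}{p}$-action on $\cC$ by colimits extends to a $\Span(\Sfin{m}{p})$-action, with $\Span(\Sfin{m}{p})$ an idempotent algebra in $\Cat_{\Sfin{m}{p}}$. Using the canonical anti-equivalence $\Span(\Sfin{m}{p}) \simeq \Span(\Sfin{m}{p})^{\op}$ given by reversing spans (compatible with the symmetric monoidal structure), a module structure on $\cC$ induces one on $\cC^{\op}$ whose underlying $\Sfin{m}{p}$-colimits in $\cC^{\op}$ are the $\Sfin{m}{p}$-limits of $\cC$ (which exist because of the norm equivalence). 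This gives $\cC^{\op} \in \Catsa{m}{p}$.

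For the morphism-level claim, note that $F^{\op}$ preserving $\Sfin{m}{p}$-colimits in $\cC^{\op}$ is equivalent to $F$ preserving $\Sfin{m}{p}$-limits in $\cC$. Assuming $F$ preserves $\Sfin{m}{p}$-colimits, the Beck--Chevalley transformation $F \circ f_! \to f_! \circ F$ is an equivalence for every $f$ with $m$-truncated $p$-typical $\pi$-finite fibers; composing with the norm equivalences on both sides produces an equivalence $F \circ f_* \simeq f_* \circ F$, so $F$ preserves $\Sfin{m}{p}$-limits. The main obstacle is ensuring that this composite equivalence is genuinely the Beck--Chevalley transformation for $f_*$ rather than some abstract equivalence; this is subsumed by Harpaz's characterization, since it implies that any $\Sfin{m}{p}$-cocontinuous functor between $m$-semiadditive categories automatically upgrades to a $\Span(\Sfin{m}{p})$-module morphism, and this compatibility transfers to $\cC^{\op}$ via the reverse-span involution as above.
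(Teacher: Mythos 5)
Your proposal is correct and follows exactly the same decomposition as the paper: since $\Catsa{m}{p}$ is a (non-full) subcategory of $\Cat_\infty$, one checks that $(-)^{\op}$ preserves its objects and its morphisms. The only difference is that the paper discharges both sublemmas by citing \cite[Propositions 2.1.4(1) and 2.1.8]{AmbiHeight}, whereas you sketch independent proofs via Harpaz's module-theoretic characterization (\Cref{prop:highercommsemiadd}) and the self-duality of $\Span(\Sfin{m}{p})$ by reversing spans; your arguments are sound, and the care you take to flag that the constructed equivalence $F\circ f_* \simeq f_*\circ F$ must genuinely be the Beck--Chevalley map, and to resolve it through the module-structure framework, is precisely the point those citations handle.
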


\begin{proof}
Since $\Catsa{m}{p}$ is a subcategory of $\Cat_\infty$, it suffices to see that 
\begin{enumerate}
    \item If $\cC\in \Catsa{m}{p}$, then $\cC^\op \in \Catsa{m}{p}$.  This follows from \cite[Proposition 2.1.4(1)]{AmbiHeight}.
    \item A functor $F \colon \cC_0 \to \cC_1$ in $\Catsa{m}{p}$ preserves $\Sfin{m}{p}$-colimits if and only if  $F^\op\colon \cC_0^\op \to \cC_1^\op$ does.  This is because a functor between $p$-typically $m$-semiadditive $\infty$-categories preserves $\Sfin{m}{p}$-limits if and only if it preserves $\Sfin{m}{p}$-colimits (\cite[Proposition 2.1.8]{AmbiHeight}).
\end{enumerate}
\end{proof}

In view of this result, the $C_2$-action on $\Cat_{\infty}$ by $(-)^{\op}$ restricts to an action on the subcategory $\Catsa{m}{p}\subset \Cat_{\infty}$, and we can form the $\infty$-category $(\Catsa{m}{p})^{hC_2}$ of $p$-typically $m$-semiadditive $\infty$-categories with anti-involution.  We will now show that this category is itself higher semiadditive.  

\begin{lem}\label{lem:lim_of_semiadd_cats}
The inclusion of the (non-full) subcategory $\widehat{\Catsa{m}{p}} \subset \widehat{\Cat_{\infty}}$ preserves limits. 
\end{lem}
\begin{proof}
The claim follows from the fact that this inclusion admits a left adjoint; the analogous statement in the non-$p$-typical case, and for small $\infty$-categories, is shown in \cite[Theorem  5.28(2)]{HarpazSpans}, and the $p$-typical case for large $\infty$-categories follows similarly. 


\end{proof}

\begin{cor}
\label{semi_add_with_anti_semi_add}
The $\infty$-category $(\Catsa{m}{p})^{hC_2}$ is $p$-typically $m$-semiadditive and the canonical functor $(\Catsa{m}{p})^{hC_2}\to \Cat_\infty^{hC_2}$ is limit preserving. 
\end{cor}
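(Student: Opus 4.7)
The plan is to deduce both statements from Lemma \ref{lem:lim_of_semiadd_cats}, using the fact that the homotopy fixed points construction $(-)^{hC_2}$ is itself a limit and therefore commutes with all other limits. By Proposition \ref{semiadd_op}, the action of $(-)^{\op}$ on $\Cat_\infty$ restricts to $\Catsa{m}{p}$, so by definition $(\Catsa{m}{p})^{hC_2}$ is the limit of the corresponding $BC_2$-shaped diagram in $\widehat{\Cat_\infty}$, and the canonical functor
\[
\iota^{hC_2}\colon (\Catsa{m}{p})^{hC_2}\to \Cat_\infty^{hC_2}
\]
is obtained by applying $(-)^{hC_2}$ to the $C_2$-equivariant inclusion $\iota\colon \Catsa{m}{p}\hookrightarrow \Cat_\infty$.

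For the first assertion, I would observe that the $BC_2$-shaped diagram defining $(\Catsa{m}{p})^{hC_2}$ factors through the non-full subcategory $\widehat{\Catsa{m}{p}}\subseteq \widehat{\Cat_\infty}$. By Lemma \ref{lem:lim_of_semiadd_cats}, this inclusion preserves limits, so the limit can be computed equivalently in $\widehat{\Catsa{m}{p}}$; in particular $(\Catsa{m}{p})^{hC_2}$ already lies in $\widehat{\Catsa{m}{p}}$, which is exactly the statement that it is $p$-typically $m$-semiadditive.

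For the second assertion, I would use that, since $(-)^{hC_2}$ commutes with limits, the limit of an $I$-indexed diagram in $(\Catsa{m}{p})^{hC_2}$ is computed by first taking the $I$-indexed limit in $\Catsa{m}{p}$ of the underlying diagram (which carries a canonical $C_2$-equivariant refinement via $(-)^{\op}$) and then applying $hC_2$; the analogous description applies in $\Cat_\infty^{hC_2}$. Since $\iota$ preserves $I$-limits by Lemma \ref{lem:lim_of_semiadd_cats}, the induced map $\iota^{hC_2}$ preserves them as well. There is no substantial obstacle here; the main care required is size bookkeeping, since the relevant limits take place in $\widehat{\Cat_\infty}$ rather than $\Cat_\infty$, so that Lemma \ref{lem:lim_of_semiadd_cats} must be invoked at that enlarged level.
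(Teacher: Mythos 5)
Your strategy matches the paper's: deduce both claims from Lemma \ref{lem:lim_of_semiadd_cats}, Proposition \ref{semiadd_op}, and the commutativity of limits with limits. However, there is a missing input in the first half. You claim it is observable that the $BC_2$-shaped diagram defining $(\Catsa{m}{p})^{hC_2}$ factors through $\widehat{\Catsa{m}{p}}\subseteq\widehat{\Cat_\infty}$, but for the (constant) diagram even to take values in the \emph{objects} of $\widehat{\Catsa{m}{p}}$, one must already know that the large $\infty$-category $\Catsa{m}{p}$ is itself $p$-typically $m$-semiadditive. This is not a triviality that falls out of the definitions; the paper explicitly invokes (the $p$-typical analog of) \cite[Proposition 2.2.11]{AmbiHeight} for exactly this point before applying Lemma \ref{lem:lim_of_semiadd_cats}. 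Without that ingredient the first assertion does not get off the ground, since there is otherwise no reason the $BC_2$-limit should be computed in $\widehat{\Catsa{m}{p}}$.

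Your handling of the second assertion is fine and is in substance the same as the paper's argument that limit-preserving functors are closed under limits in $\widehat{\Cat_\infty}$, applied to the $BC_2$-limit of the inclusion $\Catsa{m}{p}\hookrightarrow\Cat_\infty$. You are right that the care required is size bookkeeping; the paper signals this by citing the small-category analog of Lemma \ref{lem:lim_of_semiadd_cats} for the inclusion $\Catsa{m}{p}\to\Cat_\infty$ as opposed to the hatted version used in the first half.
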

  
\begin{proof}
By (the straightforward $p$-typical analog of) \cite[Proposition 2.2.11]{AmbiHeight}, the $\infty$-category $\Catsa{m}{p}$ is $p$-typically $m$-semiadditive.  By \Cref{semiadd_op}, the involution $(-)^{\op}$ of $\Cat_\infty$ restricts to an involution of $\Catsa{m}{p}$, so that we obtain a functor $BC_2\to \widehat{\Catsa{m}{p}}$ classifying this involution of $\Catsa{m}{p}\in \widehat{\Catsa{m}{p}}$. By \Cref{lem:lim_of_semiadd_cats} applied to this functor we deduce that $(\Catsa{m}{p})^{hC_2}$ is $p$-typically $m$-semiadditive as well.  

It remains to show that the functor $(\Catsa{m}{p})^{hC_2} \to \Cat_\infty^{hC_2}$ is limit preserving.  This is because the (non-fully faithful) inclusion $\Catsa{m}{p} \to \Cat_{\infty}$ is limit preserving (by the small category analogue of \Cref{lem:lim_of_semiadd_cats}) and limit-preserving functors are closed under limits in $\widehat{\Cat_{\infty}}$.
\end{proof}

Let us illustrate this corollary more concretely:
Let $\cC$ be a $p$-typically $m$-semiadditive $\infty$-category.  Then $\cC$ acquires a canonical $p$-typical $m$-commutative monoid structure, which provides for any map $f:A\to B$ in $\Sfin{m}{p}$ an integration functor $\push{f} = f_! :\cC^A \to \cC^B$ (given by left Kan extension).  Now suppose $\cC$ is equipped with an anti-involution $\inv$, so $(\cC, \inv) \in (\Catsa{m}{p})^{hC_2}$.  Then one can build a diagram
\begin{equation}\label{eqn:invhighercomm}
\begin{tikzcd}
\cC^A \arrow[r,"f_!"] \arrow[d, "\inv^A"]& \cC^B \arrow[d, "\inv^B"] \\
(\cC^A)^{\op} \arrow[r,"(f_!)^{\op}"] & (\cC^B)^{\op}.\\
\end{tikzcd}
\end{equation}
 \Cref{semi_add_with_anti_semi_add} implies that since $(\Catsa{m}{p})^{hC_2}$ is $p$-typically $m$-semiadditive, $(\cC, \inv)$ acquires a canonical $m$-commutative monoid structure in $(\Catsa{m}{p})^{hC_2}$.  Part of such data is a witness to the commutativity of (\ref{eqn:invhighercomm}): unwinding the constructions, this homotopy can be expressed concretely by the formula
\begin{equation} \label{eqn:semiaddinv}
(f_!)^{\op} \inv^A \simeq (f_*)^{\op} \inv^A \simeq  \inv^B f_! 
\end{equation}
where the first equivalence  comes from the norm equivalence $f_! \simeq f_*$ arising from the higher semiadditivity of $\cC$, and the second comes from $\inv$ being an anti-involution.

We now define a higher semiadditive version of the functors $\InvF$ and $\WInvF$.

\begin{thm}\label{thm:semifixedpoint}
The functors $\InvF\colon (\Catsa{m}{p})^{hC_2}\to \Cat_\infty^{BC_2}$ and $\WInvF\colon (\Catsa{m}{p})^{hC_2}\to \Cat_\infty^{BC_2}$ 
admit refinements to functors 
\[
 (\Catsa{m}{p})^{hC_2}\to \CMon{m}{p}(\Cat_\infty)^{BC_2}.
\] 
In other words, for $(\cC,\inv)$ a $p$-typically $m$-semiadditive $\infty$-category with anti-involution, the $\infty$-categories
$\InvF(\cC,\inv)$ and $\WInvF(\cC,\inv)$ are  canonically  $C_2$-equivariant $p$-typical $m$-commutative monoids in $\Cat_\infty$. 
\end{thm}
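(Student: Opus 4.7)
The plan is to combine the two ingredients already established: Lemma \ref{fixed_anti_inv_limits}, which tells us that $\InvF$ and $\WInvF$ are limit-preserving functors $\Cat_\infty^{hC_2} \to \Cat_\infty^{BC_2}$, and Corollary \ref{semi_add_with_anti_semi_add}, which tells us that $(\Catsa{m}{p})^{hC_2}$ is itself $p$-typically $m$-semiadditive and that the forgetful functor $(\Catsa{m}{p})^{hC_2}\to \Cat_\infty^{hC_2}$ preserves limits. The key idea is to lift the problem to the $\infty$-category of higher commutative monoids, where the unique higher commutative monoid structure on $p$-typically $m$-semiadditive $\infty$-categories (Corollary \ref{prop:canonicalhighercomm}) can be transported along $\InvF$ and $\WInvF$ for free, using only the limit-preservation statements.

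More precisely, I would proceed as follows. First, apply Corollary \ref{prop:canonicalhighercomm} to the $p$-typically $m$-semiadditive $\infty$-category $(\Catsa{m}{p})^{hC_2}$ to obtain the canonical equivalence
\[
(-)^{(-)}\colon (\Catsa{m}{p})^{hC_2}\iso \CMon{m}{p}\bigl((\Catsa{m}{p})^{hC_2}\bigr).
\]
Second, post-compose with the forgetful functor $(\Catsa{m}{p})^{hC_2}\to \Cat_\infty^{hC_2}$, which, being limit-preserving, takes $p$-typical $m$-commutative monoids to $p$-typical $m$-commutative monoids. Third, post-compose further with $\InvF$ or $\WInvF$; since these preserve all limits by Lemma \ref{fixed_anti_inv_limits}, they preserve the Segal-type condition and thus induce functors $\CMon{m}{p}(\Cat_\infty^{hC_2})\to \CMon{m}{p}(\Cat_\infty^{BC_2})$. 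Finally, identify the target with $\CMon{m}{p}(\Cat_\infty)^{BC_2}$ via the formal equivalence
\[
\CMon{m}{p}\bigl(\Fun(BC_2,\Cat_\infty)\bigr) \simeq \Fun\bigl(BC_2,\CMon{m}{p}(\Cat_\infty)\bigr),
\]
which follows from the fact that a functor into a functor category is equivalently a family of functors. The composition of these four steps yields the desired refined functor, and by construction its underlying functor to $\Cat_\infty^{BC_2}$ agrees with $\InvF$ (respectively $\WInvF$).

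The substance of the proof lies entirely in tracking that the Segal condition is preserved at each step, which reduces to the limit-preservation claims already proven; no additional categorical constructions are required. The main obstacle is thus bookkeeping the coherences through the various functor-category identifications, and I expect this to be routine once the two preparatory lemmas are in place. As a sanity check, unwinding the resulting structure at an object $(\cC,\inv)$ should recover the informal description of \Cref{def_push_quadratic_form}: the push-forward functor $\push{f}\colon \InvF(\cC^A,\inv^A)\to \InvF(\cC^B,\inv^B)$ is built from the norm equivalence $f_!\simeq f_*$, which is exactly the higher-semiadditive datum witnessing the commutativity of diagram (\ref{eqn:invhighercomm}) in Example \ref{exm:semiaddinv}.
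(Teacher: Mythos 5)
Your proposal is correct and follows essentially the same route as the paper: both proofs first invoke Corollary \ref{prop:canonicalhighercomm} (which the paper cites via the first part of Corollary \ref{semi_add_with_anti_semi_add} and Proposition \ref{prop:highercommsemiadd}) to lift to $\CMon{m}{p}((\Catsa{m}{p})^{hC_2})$, then push forward along the limit-preserving forgetful functor and along $\InvF$ (using Lemma \ref{fixed_anti_inv_limits}), observing that limit-preserving functors preserve the Segal condition, and finally identify $\CMon{m}{p}(\Cat_\infty^{BC_2})\simeq \CMon{m}{p}(\Cat_\infty)^{BC_2}$. The only cosmetic difference is that you spell out the last identification as a $\Fun(BC_2,-)$ commutation, which the paper takes as immediate.
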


\begin{proof}
We prove the result for $\InvF$, the proof for $\WInvF$ being completely analogous.
Note that if $\cD_0$ and $\cD_1$ are $\infty$-categories which admit $\Sfin{m}{p}$-limits and $F\colon \cD_0\to \cD_1$ is a $\Sfin{m}{p}$-limit preserving functor, then post-composition with $F$ induces a functor 
\[
\CMon{m}{p}(\cD_0) \to \CMon{m}{p}(\cD_1).
\]
Indeed, the Segal condition involves only $\Sfin{m}{p}$-limits and hence $F$ preserves this condition. 
Consequently, by
\Cref{fixed_anti_inv_limits}, the functor $\InvF\colon \Cat_\infty^{hC_2}\to \Cat_\infty^{BC_2}$ induces a functor 
\[
\CMon{m}{p}(\Cat_\infty^{hC_2})\to \CMon{m}{p}(\Cat_\infty^{BC_2})\simeq 
\CMon{m}{p}(\Cat_\infty)^{BC_2}
.\]
Similarly, by the second part of \Cref{semi_add_with_anti_semi_add}, the functor $(\Catsa{m}{p})^{hC_2} \to \Cat_\infty^{hC_2}$ is limit-preserving and hence induces a functor \[
\CMon{m}{p}((\Catsa{m}{p})^{hC_2})\to \CMon{m}{p}(\Cat_\infty^{hC_2}).
\]
 Finally, by the first part of \Cref{semi_add_with_anti_semi_add}, $(\Catsa{m}{p})^{hC_2}$ is $p$-typically $m$-semidditive and hence we have a canonical equivalence
 \[
 (\Catsa{m}{p})^{hC_2} \iso \CMon{m}{p}((\Catsa{m}{p})^{hC_2})
 \]
 (see \Cref{prop:highercommsemiadd}).
 Composing these three functors we get the desired refinement. 
\end{proof}
 
By abuse of notation, we denote the resulting functors 
\[
(\Catsa{m}{p})^{hC_2} \to \CMon{m}{p}(\Cat_\infty)^{BC_2}
\]
again by $\InvF$ and $\WInvF$, so that if $\cC$ is $p$-typically $m$-semiadditive, then $\InvF(\cC,\inv)$ and $\WInvF(\cC,\inv)$ are higher commutative monoids in $\Cat_\infty^{BC_2}$. 

We end this section with an explicit description of this higher commutative monoid structure on the level of objects.


\begin{prop}
\label{formula_push_anti_inv_fixed}
Let 
$
(\cC,\inv)\in (\Catsa{m}{p})^{hC_2}$.  For every map $f:A\to B$ of $p$-typical $m$-finite spaces, the higher commutative monoid structure on $\WInvF(\cC,\inv)$ provides a functor 
\[
    \push{f}\colon \WInvF(\cC,\inv)^A \to \WInvF(\cC,\inv)^B.
\]
On an object $(\phi\colon X\to \inv(X)) \in \WInvF(\cC,\inv)^A$ (where $X\in \cC^A$ and $\inv$ is applied pointwise), this functor is given by the formula
\[
\push{f}(X\oto{\phi} \inv(X)) = (f_!X \oto{\psi} \inv(f_!X)) \qin \WInvF(\cC,\inv)^{B},
\]
where $\psi$ is the composite
\[
f_!X \oto{f_! \phi} f_!\inv(X) \simeq \inv(f_*X)\oto{\inv(\Nm_f)} \inv(f_! X). 
\]
\end{prop}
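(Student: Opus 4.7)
The plan is to unwind the $p$-typical $m$-commutative monoid structure on $\WInvF(\cC,\inv)$ constructed in \Cref{thm:semifixedpoint}. By that construction, the functor $\WInvF(\cC,\inv)^{(-)}\colon \Span(\Sfin{m}{p})^{\op} \to \Cat_\infty^{BC_2}$ is the image under $\WInvF$ of the canonical higher commutative monoid $(\cC,\inv)^{(-)}$ in $(\Catsa{m}{p})^{hC_2}$, which is supplied by \Cref{prop:canonicalhighercomm} together with the $p$-typical $m$-semiadditivity of $(\Catsa{m}{p})^{hC_2}$ from \Cref{semi_add_with_anti_semi_add}. Hence $\push{f}$ on $\WInvF(\cC,\inv)$ is precisely $\WInvF$ applied to the pushforward morphism $\push{f}\colon (\cC^A,\inv^A) \to (\cC^B,\inv^B)$ in $(\Catsa{m}{p})^{hC_2}$. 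The argument then proceeds in two steps: first identify this $\push{f}$ as a morphism of $\infty$-categories with anti-involution, then compute how $\WInvF$ acts on such a morphism at the level of objects.

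For the first step, the underlying functor is left Kan extension $f_!\colon \cC^A \to \cC^B$ (see \Cref{exm:semiaddinv}), and the data promoting it to a morphism in $(\Catsa{m}{p})^{hC_2}$ is a $C_2$-intertwining isomorphism $\inv^B \circ f_! \simeq f_!^{\cC^{\op}} \circ \inv^A$ of functors $\cC^A \to (\cC^B)^{\op}$, again as described in \Cref{exm:semiaddinv}. Since left Kan extension in $\cC^{\op}$ corresponds to right Kan extension in $\cC$, one identifies $f_!^{\cC^{\op}} \simeq f_*$ on underlying $\infty$-categories, and combined with $\inv^2 \simeq \Id$ this yields a natural isomorphism
\[
f_!(\inv X) \simeq \inv(f_* X).
\]

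For the second step, the pullback definition $\WInvF(\cC,\inv) = \cC \times_{\cC \times \cC^{\op}} \Tw(\cC)$ shows that any morphism $F\colon (\cC_0,\inv_0) \to (\cC_1,\inv_1)$ in $\Cat_\infty^{hC_2}$ induces a functor sending an object $(\phi\colon X \to \inv_0 X)$ to the composite $F(X) \xrightarrow{F(\phi)} F(\inv_0 X) \xrightarrow{\sim} \inv_1(F(X))$, with the last arrow coming from the intertwining data of $F$. Applying this to $F = f_!$ using the intertwining $f_!(\inv X) \simeq \inv(f_* X)$ from the first step, and then rewriting the target as $\inv(f_! X)$ via $\inv(\Nm_f)\colon \inv(f_* X) \iso \inv(f_! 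X)$ (an isomorphism since $\Nm_f$ is, by $m$-semiadditivity of $\cC$) so that the output lies in $\WInvF(\cC,\inv)^B$, recovers exactly the composite $\psi$ in the statement. The main technical obstacle is pinning down the precise intertwining isomorphism in $(\Catsa{m}{p})^{hC_2}$; the reason the stated formula factors through $\inv(f_* X)$ rather than landing directly in $\inv(f_! X)$ is that the $C_2$-equivariance of the pushforward is most transparently expressed via $f_*$ (the pushforward in the opposite category), and the norm equivalence is then needed to identify this with $f_!$.
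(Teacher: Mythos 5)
Your proof is correct and takes essentially the same approach as the paper: both unwind the pullback definition $\WInvF(\cC,\inv) = \cC \times_{\cC\times\cC^\op} \Tw(\cC)$ applied to the canonical higher commutative monoid $(\cC,\inv)$, and both invoke the intertwining datum from \Cref{exm:semiaddinv} (which is precisely the composite through $\inv(f_*X)$ via the norm) to assemble the stated formula. The only cosmetic difference is that you present the intertwining isomorphism as landing at $\inv(f_*X)$ with the norm as a subsequent correction, whereas the paper packages the full composite $f_!\inv(X)\simeq\inv(f_*X)\to\inv(f_!X)$ as the single piece of intertwining data — the content is identical.
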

\begin{proof}

The higher commutative monoid structure of $\WInvF(\cC, \inv)$ is defined by applying the (limit preserving) functor given by 
\[
\WInvF(\cC, \inv) = \cC \times_{\cC \times \cC^{\op}} \Tw(\cC)
\]
to the higher commutative monoid $(\cC,\inv)\in \CMon{m}{p}((\Catsa{m}{p})^{hC_2})$.  Here, we emphasize that the functor $\cC \to \cC \times \cC^{\op}$ is given by $(\mathrm{id}, \inv)$.

Using this pullback, we find that the integration map $\push{f}$ on an object $(\phi\colon X\to \inv(X)) \in \WInvF(\cC,\inv)^A$ consists of the following collection of data:
\begin{enumerate}
    \item The integration of $X \in \cC^A$, which is $f_!X \in \cC^B$.  
    \item The integration of the (twisted) arrow $(X\xrightarrow{\phi} \inv(X))$ in $\cC^A$, which is the (twisted) arrow $(f_!X \xrightarrow{f_! \phi} f_!\inv(X))$ in $\cC^B$.
    \item The identification of their corresponding images in $\cC^B \times (\cC^B)^{\op}$.  Referring to the first two points, this amounts to identifying the pairs $(f_!X, \inv(f_!X))$ and $(f_!X, f_!\inv(X))$.  By definition, this uses the compatibility of $f_!$ with $\inv$ inherent in the higher commutative monoid structure of $(\cC,\inv)\in \CMon{m}{p}((\Catsa{m}{p})^{hC_2})$.  As in the discussion after \Cref{semi_add_with_anti_semi_add} (and in particular (\ref{eqn:semiaddinv})), this is given by the composite
    \[
    f_!\inv(X) \simeq \inv(f_* X) \xrightarrow{\inv(\Nm_f)} \inv(f_! X).\footnote{Note that $\inv$ is contravariant and hence flips the direction of the norm map.}
    \]
\end{enumerate}

Putting together this data, we obtain the claimed formula.  
\end{proof}

\begin{rem}
Since the fully faithful embedding $\InvF(\cC,\inv)\subseteq \WInvF(\cC,\inv)$ is compatible with the higher commutative monoid structure, the same formula applies for $\InvF$ instead of $\WInvF$.  
\end{rem}

\subsubsection{Lax symmetric monoidal structure on $\InvF$}

In the previous section, we defined a functor 
\[
\InvF\colon (\Catsa{m}{p})^{hC_2}\to \CMon{m}{p}(\Cat_\infty)^{BC_2}
\]
taking an $m$-semiadditive $\infty$-category $\cC$ with an anti-involution $\inv\colon \cC\iso \cC^\op$ to its $\infty$-category of fixed points $\InvF(\cC,\inv)$, considered as a ($C_2$-equivariant) higher commutative monoid in categories.  The purpose of this section is to endow $\InvF$ with multiplicative structure: more precisely, we will describe symmetric monoidal structures on the source and target of $\InvF$, and show that $\InvF$ is lax symmetric monoidal with respect to these structures. In \Cref{subsub:mult}, this will be used to endow the Grothendieck-Witt spectrum of a higher semiadditive symmetric monoidal $\infty$-category with a ring structure.

Recall that $\Catsa{m}{p}$ admits a symmetric monoidal structure coming from the Lurie tensor product (cf. \Cref{subsub:harpazwork}).  We first show that this lifts to a symmetric monoidal structure on $(\Catsa{m}{p})^{hC_2}$.
\begin{prop}\label{prop:hc2symmon}
The $C_2$-action on $\Catsa{m}{p}$ given by $\cC \mapsto \cC^{\op}$ lifts canonically to a symmetric monoidal action, that is, a $C_2$-action on $\Catsa{m}{p}$ considered as an object of $\calg(\widehat{\Cat_\infty})$.
\end{prop}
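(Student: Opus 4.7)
The plan is to upgrade the $C_2$-action on $\Catsa{m}{p}$ furnished by \Cref{semiadd_op} to an action by symmetric monoidal auto-equivalences with respect to the Lurie tensor product, equivalently, to exhibit $\Catsa{m}{p} \in \calg(\widehat{\Cat_\infty})$ as a $C_2$-fixed object. The key ingredients will be the universal property of the Lurie tensor product combined with the norm equivalence characteristic of higher semiadditivity.

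First, I would invoke the universal characterization of the Lurie tensor product: for $\cC, \cD, \cE \in \Catsa{m}{p}$, there is a natural equivalence
\[
\Map_{\Catsa{m}{p}}(\cC \otimes \cD, \cE) \simeq \Fun^{\Sfin{m}{p},\Sfin{m}{p}}(\cC \times \cD, \cE),
\]
where the right-hand side denotes bilinear functors preserving $\Sfin{m}{p}$-colimits separately in each variable. The opposite-category functor exchanges colimits and limits, but in a $p$-typically $m$-semiadditive $\infty$-category, $\Sfin{m}{p}$-colimits and $\Sfin{m}{p}$-limits coincide via the norm equivalence. Consequently, a functor $\cC \to \cE$ in $\Catsa{m}{p}$ preserves $\Sfin{m}{p}$-colimits if and only if its opposite $\cC^{\op} \to \cE^{\op}$ does, and analogously for bilinear (or more generally $n$-linear) functors in several variables. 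By Yoneda this forces natural equivalences $(\cC \otimes \cD)^{\op} \simeq \cC^{\op} \otimes \cD^{\op}$, compatible with the monoidal unit, which form the heart of the desired symmetric monoidal structure.

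To organize these pointwise equivalences into a coherent symmetric monoidal $C_2$-action, I would leverage the idempotent algebra structure of \Cref{prop:highercommsemiadd}, which identifies $\Catsa{m}{p}$ with the $\infty$-category of modules over $\Span(\Sfin{m}{p})$ in $\Cat_{\Sfin{m}{p}}$. Since spans are self-dual under reversal, the algebra $\Span(\Sfin{m}{p})$ should be canonically $C_2$-equivariant under $(-)^{\op}$, and this equivariance would propagate formally to its $\infty$-category of modules. The main obstacle is the coherence bookkeeping required to upgrade pointwise equivalences to a fully structured symmetric monoidal $C_2$-action; the cleanest resolution is likely to verify that the universal property above pins down the symmetric monoidal lift uniquely, so its existence can be established abstractly (via the $\infty$-operadic formalism of \cite{HA}) rather than by explicit construction of every coherence datum.
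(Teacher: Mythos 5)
Your key mathematical observation is the right one: in a $p$-typically $m$-semiadditive $\infty$-category the norm equivalence identifies $\Sfin{m}{p}$-colimits with $\Sfin{m}{p}$-limits, so $(-)^{\op}$ preserves both the objects and the multilinear morphisms that enter into the Lurie tensor product. This is exactly the lemma the paper leans on (it is the content of \Cref{semiadd_op} and is reused in the present proof). However, your proposed route to coherence has two problems.

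First, the detour through the idempotent algebra $\Span(\Sfin{m}{p})$ does not get off the ground: you would need $(-)^{\op}$ to give a $C_2$-action on the \emph{ambient} symmetric monoidal $\infty$-category $\Cat_{\Sfin{m}{p}}$, on which $\Span(\Sfin{m}{p})$ is an algebra, in order to speak of $\Span(\Sfin{m}{p})$ as an equivariant algebra and propagate equivariance to its modules. But $(-)^{\op}$ carries an $\infty$-category with $\Sfin{m}{p}$-colimits to one with $\Sfin{m}{p}$-limits, so it is not an endofunctor of $\Cat_{\Sfin{m}{p}}$ at all; only on the semiadditive \emph{sub}category does it make sense, which is exactly the thing being established. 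Second, the closing appeal to ``the universal property pins down the lift uniquely, so existence follows abstractly'' is not a valid inference: uniqueness of a putative symmetric monoidal enhancement does not produce one, and this is precisely where the coherence work has to actually be done.

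The paper avoids both issues by working with the $\infty$-operad $(\Catsa{m}{p})^{\otimes}$ realized as a (non-full) subcategory of $\Cat_{\infty}^{\otimes}$ (Cartesian structure), with objects tuples of $p$-typically $m$-semiadditive $\infty$-categories and morphisms those whose component functors preserve $\Sfin{m}{p}$-colimits in each variable separately. Since $(-)^{\op}$ is already a symmetric monoidal $C_2$-action on $(\Cat_{\infty}, \times)$, it induces one on $\Cat_{\infty}^{\otimes}$, and all that remains is the purely local check that this action preserves the subcategory $(\Catsa{m}{p})^{\otimes}$ and carries coCartesian edges to coCartesian edges --- both of which follow from your colimit/limit observation. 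This sidesteps any need to construct coherence data by hand, because the coherence is inherited from the ambient Cartesian operad. You should replace the idempotent-algebra step and the ``uniqueness implies existence'' step with this subcategory-of-operads argument.
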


\begin{proof}
For this proof, we will use the standard notations from the theory of $\infty$-operads of \cite{HA}.  Let $\Cat_{\infty}^{\otimes}$ denote the $\infty$-operad corresponding to the Cartesian symmetric monoidal structure on $\Cat_{\infty}$.  Then, the $\infty$-operad $(\Catsa{m}{p})^{\otimes}$ corresponding to the symmetric monoidal $\infty$-category $\Catsa{m}{p}$ is given as the subcategory $(\Catsa{m}{p})^{\otimes}\subset \Cat_{\infty}^{\otimes}$ where (cf. \cite[Notation 4.8.1.2]{HA}):
\begin{enumerate}
    \item The objects are the sequences $(\cC_1, \cC_2, \cdots, \cC_k)$ where each $\cC_i \in \Catsa{m}{p}$.  
    \item A morphism $(\cC_1, \cC_2, \cdots, \cC_k) \to (\cD_1, \cD_2, \cdots , \cD_j)$ covering $\alpha: \langle k\rangle \to \langle j \rangle $  is in the subcategory $(\Catsa{m}{p})^{\otimes} \subset \Cat_{\infty}^{\otimes}$ iff each component functor
    \[
    \prod_{i\in \alpha^{-1}(\ell)} \cC_i \to \cD_\ell
    \]
    preserves $\Sfin{m}{p}$-colimits separately in each variable.  
\end{enumerate}

The $C_2$-action by $\op$ on $\Cat_{\infty}$ is symmetric monoidal, and thus induces a $C_2$-action on $\Cat_{\infty}^{\otimes}$.  To see that this restricts to a symmetric monoidal action on $\Catsa{m}{p}$, it suffices to check that:
\begin{enumerate}
    \item The action map $\op: \Cat_{\infty}^{\otimes} \to \Cat_{\infty}^{\otimes}$ preserves the subcategory $(\Catsa{m}{p})^{\otimes}$.  This is true on objects because the opposite of a $p$-typically $m$-semiadditive $\infty$-category is $p$-typically $m$-semiadditive.  It follows for morphisms because functors between $p$-typically $m$-semiadditive $\infty$-categories preserve $\Sfin{m}{p}$-colimits if and only if they preserve $\Sfin{m}{p}$-limits.  
    \item The resulting functor $\op: (\Catsa{m}{p})^{\otimes} \to (\Catsa{m}{p})^{\otimes}$ sends coCartesian arrows to coCartesian arrows.  This is immediate for the inert morphisms, and follows for the active morphisms again from the fact that functors between $p$-typically $m$-semiadditive $\infty$-categories preserve $\Sfin{m}{p}$-colimits if and only if they preserve $\Sfin{m}{p}$-limits.  
\end{enumerate} 
\end{proof}

As a consequence of this result, we can regard the fixed points $(\Catsa{m}{p})^{hC_2}$ as taken within the $\infty$-category of symmetric monoidal $\infty$-categories; this equips $(\Catsa{m}{p})^{hC_2}$ with a symmetric monoidal structure.

Next, we turn our attention to functor $\InvF$.  We start by considering a closely related functor:

\begin{lem}\label{lem:Philaxsym}
Let $(\Catsa{m}{p})^{hC_2}$ be given a symmetric monoidal structure via \Cref{prop:hc2symmon}, and regard $\Cat_\infty^{BC_2}$ as having the Cartesian symmetric monoidal structure.  Then there is a canonical lax symmetric monoidal structure on the composite
\[
(\Catsa{m}{p})^{hC_2} \xrightarrow{j} \Cat_{\infty}^{hC_2} \xrightarrow{\InvF} \Cat_{\infty}^{BC_2},
\]
where $j$ denotes the forgetful functor.
\end{lem}
\begin{proof}
This is because each functor in the composite admits a lax symmetric monoidal structure:
\begin{enumerate}
    \item  By the proof of \Cref{prop:hc2symmon}, the forgetful functor $\Catsa{m}{p} \to \Cat_{\infty}$ extends to a $BC_2$-family of lax symmetric monoidal functors.  Therefore, its limit, which is the functor $j$, acquires a lax symmetric monoidal structure. 
    \item The functor $\InvF$ preserves limits by \Cref{fixed_anti_inv_limits}, and both the source and the target are given the Cartesian symmetric monoidal structure; thus, $\InvF$ is symmetric monoidal.  
\end{enumerate}    
\end{proof}
Hence, to endow the fixed points functor $\InvF\colon (\Catsa{m}{p})^{hC_2}\to \CMon{m}{p}(\Cat_\infty)^{BC_2}$ with a lax symmetric monoidal structure, it remains to consider the effect of taking $m$-commutative monoids on lax symmetric monoidal functors. 

The $\infty$-category $\Span(\Sfin{m}{p})^\op$ admits a symmetric monoidal structure given by the Cartesian product in $\Sfin{m}{p}$ \cite[\S 2]{BarwickMackey2}. Hence, for every presentably symmetric monoidal $\infty$-category $\cC$, we can consider $\PMon{m}{p}(\cC)$ as a symmetric monoidal $\infty$-category via \emph{Day convolution} \cite{SaulDay}. 
\begin{prop}\label{cmon_pmon_symm_mon}
Let $\cC$ be a presentably symmetric monoidal $\infty$-category. 
Then $\CMon{m}{p}(\cC)$ is a symmetric monoidal localization of $\PMon{m}{p}(\cC)$, and in particular, it admits a unique symmetric monoidal structure for which the localization functor $\PMon{m}{p}(\cC)\to \CMon{m}{p}(\cC)$ is symmetric monoidal. Moreover, the equivalence $\CMon{m}{p}(\cC)\simeq \CMon{m}{p}(\Spc)\otimes \cC$ is a symmetric monoidal equivalence. 
\end{prop}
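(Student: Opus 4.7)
The plan is to recognize $\CMon{m}{p}(\cC)\subseteq \PMon{m}{p}(\cC)$ as a reflective localization and check that its local equivalences are closed under the Day convolution tensor product; both assertions of the proposition will then follow via the standard symmetric-monoidal-localization machinery of \cite[\S 2.2.1]{HA}.

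First I would recall that, because $\cC$ is presentably symmetric monoidal and $\Span(\Sfin{m}{p})^\op$ is a small symmetric monoidal $\infty$-category, Day convolution endows $\PMon{m}{p}(\cC)=\Fun(\Span(\Sfin{m}{p})^\op,\cC)$ with a presentably symmetric monoidal structure, and one has a canonical symmetric monoidal equivalence
\[
\PMon{m}{p}(\cC)\simeq \PMon{m}{p}(\Spc)\otimes \cC
\]
(Lurie tensor product of presentable $\infty$-categories). The inclusion $\CMon{m}{p}(\cC)\subseteq \PMon{m}{p}(\cC)$ is an accessible reflective localization cut out by the Segal condition. By \cite[Proposition 2.2.1.9]{HA}, it upgrades to a symmetric monoidal localization as soon as the class $W$ of local equivalences is stable under tensoring with an arbitrary object, which is the main point to verify.

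To check this closure, observe that $W$ is strongly saturated and that Day convolution preserves colimits in each variable, so it suffices to test on a generating set of $W$ tensored with a generating set of $\PMon{m}{p}(\cC)$, namely representables. Unwinding the Segal condition, $W$ is generated by the maps
\[
\coprod_{a\in A} y(\pt)\to y(A), \qquad A\in \Sfin{m}{p},
\]
where $y\colon \Span(\Sfin{m}{p})^\op\hookrightarrow \PMon{m}{p}(\Spc)$ is the Yoneda embedding. Since $y$ is symmetric monoidal for Day convolution, tensoring with $y(B)$ yields $\coprod_{a\in A} y(B)\to y(A\times B)$. Testing against a Segal object $M$, both sides compute $\prod_{(a,b)\in A\times B} M(\pt)$, via the Segal condition at $A\times B$ on the one hand and at $B$ applied pointwise on the other; hence the map lies in $W$. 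This gives the desired closure, so the localization is symmetric monoidal and $\CMon{m}{p}(\cC)$ inherits the unique symmetric monoidal structure making the reflection symmetric monoidal.

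For the final assertion, I would tensor the symmetric monoidal localization $L\colon \PMon{m}{p}(\Spc)\to \CMon{m}{p}(\Spc)$ with $\cC$ in the Lurie tensor product; this yields a symmetric monoidal localization $\PMon{m}{p}(\cC)\simeq \PMon{m}{p}(\Spc)\otimes \cC \to \CMon{m}{p}(\Spc)\otimes \cC$ whose class of local equivalences, checked on representables, coincides with $W$. The target is therefore canonically identified with $\CMon{m}{p}(\cC)$ as a symmetric monoidal $\infty$-category, giving the claimed equivalence $\CMon{m}{p}(\cC)\simeq \CMon{m}{p}(\Spc)\otimes \cC$. The main obstacle is the closure-under-tensoring check in step three; the rest is formal manipulation of Day convolution and presentable localizations.
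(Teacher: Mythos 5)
The paper's proof of this proposition is a one-line appeal to \cite[Theorem 4.17]{ShayTomer}, whereas you give a self-contained argument --- so the route differs in presentation, though both rest on the same underlying mechanism (identify $\CMon{m}{p}(-)\subset \PMon{m}{p}(-)$ as an accessible reflective localization and check that its local equivalences are stable under Day convolution). Your argument is correct in substance, modulo two notational points. First, the generating maps should be written as $\colim_A y(\pt)\to y(A)$, the colimit over the $\pi$-finite \emph{groupoid} $A$ rather than a discrete coproduct indexed by the points of $A$; your subsequent use of the Segal condition at $A\times B$ shows you intend this, so it is only a matter of phrasing. Second, in the middle paragraph the representables $y(B)$ generate $\PMon{m}{p}(\cC)$ only in the case $\cC=\Spc$; for general $\cC$ the generators are external tensors $y(B)\boxtimes c$ with $c\in\cC$. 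The cleanest reading of your argument is that the middle paragraph establishes the symmetric monoidal localization for $\Spc$, and the final paragraph --- as you in fact do --- tensors that localization with $\cC$ in the Lurie tensor product to deduce the general case and the symmetric monoidal identification $\CMon{m}{p}(\Spc)\otimes\cC\simeq\CMon{m}{p}(\cC)$. With that reading, your core check that $\colim_A y(B)\to y(A\times B)$ is a Segal equivalence (via the Segal condition at $A\times B$ on one side and at $B$ pointwise plus Fubini for limits on the other) is exactly right. What your route buys is transparency: it makes explicit that the symmetric monoidal compatibility of the Segal localization ultimately reduces to the elementary fact that for a Segal object $M$ the assignment $A\mapsto M(A)$ intertwines products of $\pi$-finite spaces with iterated limits.
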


\begin{proof}
This follows from \cite[Theorem 4.27]{ShayTomer}.  
\end{proof}

Combining these results, we obtain
\begin{thm}\label{fixed_lax}
    The functor $\InvF\colon (\Catsa{m}{p})^{hC_2} \to \CMon{m}{p}(\Cat_\infty^{BC_2})$ is lax symmetric monoidal. 
\end{thm}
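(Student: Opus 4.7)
The plan is to retrace the construction of $\InvF\colon (\Catsa{m}{p})^{hC_2}\to \CMon{m}{p}(\Cat_\infty)^{BC_2}$ given in the proof of \Cref{thm:semifixedpoint}, and equip each of the three functors in that composite with a lax symmetric monoidal refinement. Recall that the composite is
\[
(\Catsa{m}{p})^{hC_2} \xrightarrow{\ \alpha\ } \CMon{m}{p}\bigl((\Catsa{m}{p})^{hC_2}\bigr) \xrightarrow{\CMon{m}{p}(j)} \CMon{m}{p}\bigl(\Cat_\infty^{hC_2}\bigr)\xrightarrow{\CMon{m}{p}(\InvF_0)} \CMon{m}{p}\bigl(\Cat_\infty^{BC_2}\bigr),
\]
where $\alpha$ is the canonical equivalence of \Cref{prop:canonicalhighercomm} applied to the $p$-typically $m$-semiadditive $\infty$-category $(\Catsa{m}{p})^{hC_2}$, $j$ is the forgetful functor, and $\InvF_0\colon \Cat_\infty^{hC_2}\to \Cat_\infty^{BC_2}$ is the (non-higher-semiadditive) fixed-points functor of \Cref{fixed_points_anti_involution}. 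The target of the composite is identified with $\CMon{m}{p}(\Cat_\infty)^{BC_2}$ using that $\CMon{m}{p}$ commutes with the $BC_2$-limit.

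For the first factor $\alpha$, we work inside the symmetric monoidal category $\Catsa{m}{p}$ and its module category structure from \Cref{prop:highercommsemiadd}. The symmetric monoidal $C_2$-action of \Cref{prop:hc2symmon} exhibits $(\Catsa{m}{p})^{hC_2}$ as a limit of a $BC_2$-diagram in $\calg(\widehat{\Cat_\infty})$ valued in $\Catsa{m}{p}$-modules. Because $\Span(\Sfin{m}{p})$ is an \emph{idempotent} algebra in $\Cat_{\Sfin{m}{p}}$, the unit map $\cC\to \cC\otimes \Span(\Sfin{m}{p})\simeq \CMon{m}{p}(\cC)$ is a symmetric monoidal equivalence for every $\cC\in\calg(\Catsa{m}{p})$ (this is the symmetric monoidal refinement of \cite[Proposition 4.8.2.10]{HA}). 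Taking $C_2$-fixed points of this symmetric monoidal equivalence (applied pointwise to the $BC_2$-diagram) promotes $\alpha$ to a symmetric monoidal equivalence.

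For the remaining two factors, by \Cref{lem:Philaxsym} the composite $\InvF_0\circ j$ is lax symmetric monoidal and both $j$ and $\InvF_0$ preserve limits (by \Cref{semi_add_with_anti_semi_add} and \Cref{fixed_anti_inv_limits}). So it suffices to argue that post-composition with $\CMon{m}{p}$ promotes lax symmetric monoidal, limit-preserving functors to lax symmetric monoidal functors on $m$-commutative monoid categories. This is where \Cref{cmon_pmon_symm_mon} is crucial: the symmetric monoidal equivalence $\CMon{m}{p}(\cC)\simeq \CMon{m}{p}(\Spc)\otimes \cC$ identifies $\CMon{m}{p}(-)$ with tensoring by $\CMon{m}{p}(\Spc)\in\calg(\widehat{\Cat_\infty})$, an operation that is manifestly symmetric monoidal, hence sends (lax) symmetric monoidal functors to (lax) symmetric monoidal functors. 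Applying this to the lax symmetric monoidal functor $\InvF_0\circ j$ yields the desired lax symmetric monoidal structure on $\CMon{m}{p}(\InvF_0)\circ\CMon{m}{p}(j)$, and composing with $\alpha$ completes the proof. The main technical obstacle is ensuring that \Cref{cmon_pmon_symm_mon} can be upgraded from the symmetric monoidal statement to one about lax symmetric monoidal functors; this should be handled by interpreting $\CMon{m}{p}$ via Day convolution on $\Span(\Sfin{m}{p})^\op$ (with its Cartesian symmetric monoidal structure), which is naturally functorial in lax symmetric monoidal, limit-preserving functors of the target.
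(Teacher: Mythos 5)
Your plan correctly identifies the key structural pieces (the symmetric monoidal unit equivalence, the lax symmetric monoidal content of \Cref{lem:Philaxsym}, and the need for $\CMon{m}{p}(-)$ to respect lax symmetric monoidality), but your main argument for the last step has a genuine gap. The identification $\CMon{m}{p}(\cC)\simeq \CMon{m}{p}(\Spc)\otimes \cC$, and functoriality of $\CMon{m}{p}(\Spc)\otimes(-)$, live in $\calg(\Pr^L)$ (or its very large analogue), where morphisms are \emph{colimit}-preserving symmetric monoidal functors. The functor $\InvF_0\circ j\colon (\Catsa{m}{p})^{hC_2}\to\Cat_\infty^{BC_2}$ is \emph{limit}-preserving and only lax symmetric monoidal, so it is not a morphism in that category, and you cannot tensor it with $\CMon{m}{p}(\Spc)$ in the way you propose. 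The intrinsic functoriality of $\CMon{m}{p}(-)$ in limit-preserving functors gives you the underlying functor $\CMon{m}{p}(\InvF_0\circ j)$, but not its lax symmetric monoidal refinement; that is precisely the content one still has to produce.

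You flag this yourself in the final sentence and suggest Day convolution; that is exactly what the paper's proof does, but it is worth emphasizing how, since making this precise is the whole point of the proof. Rather than applying $\CMon{m}{p}$ directly, the paper factors through the pre-monoid categories: it writes $\InvF$ as
\[
(\Catsa{m}{p})^{hC_2} \iso \CMon{m}{p}\bigl((\Catsa{m}{p})^{hC_2}\bigr) \into \PMon{m}{p}\bigl((\Catsa{m}{p})^{hC_2}\bigr) \xrightarrow{\InvF_0\circ j\circ(-)} \PMon{m}{p}(\Cat_\infty^{BC_2}) \xrightarrow{\ L\ } \CMon{m}{p}(\Cat_\infty^{BC_2}).
\]
On pre-monoids, which are genuine functor categories $\Fun(\Span(\Sfin{m}{p})^\op,-)$ with Day convolution, post-composition by a lax symmetric monoidal functor (here $\InvF_0\circ j$, via \Cref{lem:Philaxsym}) is lax symmetric monoidal by a standard property of Day convolution. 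The inclusion $\CMon{m}{p}\into\PMon{m}{p}$ is lax symmetric monoidal because it is the right adjoint of the symmetric monoidal localization $L$ of \Cref{cmon_pmon_symm_mon}, and $L$ itself is symmetric monoidal. So the ingredients you need are all in the paper, but the decomposition must pass through $\PMon$ and the adjunction $L\dashv\iota$; your tensor-product shortcut does not supply the lax symmetric monoidal structure on the middle step.
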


\begin{proof}
We can write $\InvF$ as a composite
\begin{align*}
(\Catsa{m}{p})^{hC_2} \iso \CMon{m}{p}((\Catsa{m}{p})^{hC_2}) &\into \PMon{m}{p}((\Catsa{m}{p})^{hC_2}) \\
&\oto{\InvF \circ (-)} \PMon{m}{p}(\Cat_\infty^{BC_2}) \oto{L} \CMon{m}{p}(\Cat_\infty^{BC_2})   ,
\end{align*}
where $L$ is the symmetric monoidal localization from \Cref{cmon_pmon_symm_mon} and $\InvF \circ (-)$ denotes postcomposition with $\InvF$ regarded as a functor $(\Catsa{m}{p})^{hC_2} \to \Cat_\infty^{BC_2}$.   We show, in order, that each map in this composite admits a lax symmetric monoidal structure. 

\begin{itemize}
\item The equivalence 
\[
    (\Catsa{m}{p})^{hC_2} \iso \CMon{m}{p}((\Catsa{m}{p})^{hC_2}) \simeq (\Catsa{m}{p})^{hC_2} \otimes \CMon{m}{p}(\Spc)
\] 
identifies with the tensor product of the identity functor of $(\Catsa{m}{p})^{hC_2}$ and the unit $\Spc \to \CMon{m}{p}(\Spc)$. Hence, it is a symmetric monoidal equivalence by the second part of \Cref{cmon_pmon_symm_mon}.

\item The inclusion $\CMon{m}{p}((\Catsa{m}{p})^{hC_2}) \into \PMon{m}{p}((\Catsa{m}{p})^{hC_2})$ is the right adjoint of the symmetric monoidal localization $L$, and hence is canonically lax symmetric monoidal. 

\item For symmetric monoidal $\infty$-categories $\cA$ and $\cB$, the Day convolution symmetric monoidal structure on $\Fun(\cA,\cB)$ (when it exists) is functorial in post-composing with lax symmetric monoidal functors. Hence, by \Cref{lem:Philaxsym},
the functor $\InvF \circ (-)$ is lax symmetric monoidal with respect to Day convolution. 

\item Finally, $L$ is a symmetric monoidal localization.
\end{itemize}
\end{proof}

\subsection{Grothendieck-Witt theory as a higher commutative monoid}

We now refine $\GW$ to a functor valued in $p$-typical $m$-commutative pre-monoids.  After some preliminaries on symmetric monoidal duality, we refine the functor $\QF$ from \Cref{ex:QF_as_fixed_points} to take $m$-semiadditively symmetric monoidal categories to $m$-commutative monoids (\Cref{{subsub:sbf}}).  Then, we apply group completion to obtain the semiadditive form of $\GW$ in \Cref{subsub:gpcomplete}, and discuss the multiplicative structure in \Cref{subsub:mult}.  Finally, in \Cref{subsub:gwfield}, we unwind the definition of the higher semiadditive structure on $\GW$ in the special case of vector spaces over a field.  

\subsubsection{Symmetric monoidal duality and semiadditivity}

Let $\cC$ be a symmetric monoidal $\infty$-category.  Then recall (\Cref{duality_anti_involution}) that one can extract the full subcategory $\cC^{\dbl}\subset \cC$ of dualizable objects.  This subcategory admits a canonical anti-involution by symmetric monoidal duality, and so the assignment $\cC \mapsto \cC^{\dbl}$ determines a functor
\[
(-)^{\dbl}\colon \calg(\Cat_\infty) \to \Cat_\infty^{hC_2}.
\]

We will need a higher semiadditive version of this functor:

\begin{prop}\label{prop:extractdbl}
The assignment $\cC \mapsto \cC^{\dbl}$ determines a functor
\[
(-)^{\dbl}\colon \calg(\Catsa{m}{p}) \to (\Catsa{m}{p})^{hC_2}.
\]
Moreover, this functor is lax symmetric monoidal with respect to the Lurie tensor product (cf. \Cref{prop:hc2symmon}).  
\end{prop}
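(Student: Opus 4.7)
My plan is to refine the classical functor $(-)^{\dbl}: \calg(\Cat_\infty) \to \Cat_\infty^{hC_2}$ along the forgetful functors $\calg(\Catsa{m}{p}) \to \calg(\Cat_\infty)$ and $(\Catsa{m}{p})^{hC_2} \to \Cat_\infty^{hC_2}$. The main thing to verify is that, for every $\cC \in \calg(\Catsa{m}{p})$, the subcategory $\cC^{\dbl}$ is itself $p$-typically $m$-semiadditive with $\Sfin{m}{p}$-(co)limits computed in $\cC$. Once this is established, functoriality, the duality anti-involution, and the lax symmetric monoidal structure all become essentially formal.

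The key claim is the closure of dualizable objects in $\cC$ under $\Sfin{m}{p}$-colimits. Given a diagram $X: A \to \cC^{\dbl}$ with $A \in \Sfin{m}{p}$, I would propose $\colim_A (\Du \circ X)$ as the candidate dual of $\colim_A X$, and construct evaluation and coevaluation maps by combining the pointwise duality data with the norm equivalences built into the $m$-semiadditive structure (in particular, the summation/diagonal maps $\push{\pi}$ and $\pi^*$ associated to the terminal map $\pi: A \to \pt$, along with the fact that $\otimes$ distributes over $\Sfin{m}{p}$-colimits). The triangle identities then follow from their pointwise counterparts together with the standard identities relating $\pi^*$ and $\push{\pi}$ supplied by the canonical $m$-commutative monoid structure on $\one$. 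This is the technically substantive step and where the $m$-semiadditive hypothesis is genuinely used.

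With this in place, the remaining assertions are formal. Functoriality in morphisms of $\calg(\Catsa{m}{p})$ is immediate because symmetric monoidal functors automatically preserve dualizable objects. The duality functor $\Du: \cC^{\dbl} \iso (\cC^{\dbl})^{\op}$ sends $\Sfin{m}{p}$-colimits to $\Sfin{m}{p}$-limits, which agree with $\Sfin{m}{p}$-colimits in $(\cC^{\dbl})^{\op}$ by $m$-semiadditivity and \Cref{semiadd_op}; hence $\Du$ is a morphism in $\Catsa{m}{p}$ and $(\cC^{\dbl}, \Du) \in (\Catsa{m}{p})^{hC_2}$, yielding the refined functor $(-)^{\dbl}: \calg(\Catsa{m}{p}) \to (\Catsa{m}{p})^{hC_2}$.

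For the lax symmetric monoidal structure, I would lift the classical lax structure on $(-)^{\dbl}: \calg(\Cat_\infty) \to \Cat_\infty^{hC_2}$, whose structure maps $\cC^{\dbl} \otimes \cD^{\dbl} \to (\cC \otimes \cD)^{\dbl}$ express that tensor products of dualizable objects are dualizable. The new content is that these structure maps are $\Sfin{m}{p}$-colimit preserving in each variable, which is automatic from the distributivity of $\otimes$ over $\Sfin{m}{p}$-colimits in any object of $\Catsa{m}{p}$. Combined with the symmetric monoidal structure on $(\Catsa{m}{p})^{hC_2}$ from \Cref{prop:hc2symmon}, this supplies the required lax symmetric monoidal refinement. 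The chief obstacle throughout is the closure of dualizable objects under $\Sfin{m}{p}$-colimits; once past it, everything else reduces to bookkeeping about how the semiadditive and monoidal structures interact through the relevant forgetful functors.
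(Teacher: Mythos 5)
Your construction of the underlying functor $(-)^{\dbl}\colon \calg(\Catsa{m}{p}) \to (\Catsa{m}{p})^{hC_2}$ is sound, and the route is slightly different from the paper's: you restrict the existing functor $\calg(\Cat_\infty) \to \Cat_\infty^{hC_2}$ along the forgetful subcategory inclusions, while the paper factors through the intermediate category $\calg(\Catsa{m}{p})^{\rig}$ of rigid categories and invokes the triviality of the $C_2$-action on rigid symmetric monoidal $\infty$-categories (\cite[Theorem 5.11]{HeineLopezSpitzweck}). Both routes hinge on the same technical input, the closure of $\cC^{\dbl}\subseteq\cC$ under $\Sfin{m}{p}$-colimits; you propose to prove this directly via the norm equivalences, which is correct, whereas the paper cites it from the literature.

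Your treatment of the lax symmetric monoidal structure, however, has a genuine gap. You propose to ``lift the classical lax structure'' on $(-)^{\dbl}\colon \calg(\Cat_\infty)\to\Cat_\infty^{hC_2}$, but the two sides carry \emph{different} monoidal structures: the coCartesian monoidal structure on $\calg(\Cat_\infty)$ (commutative algebras with respect to Cartesian product) is simply the Cartesian product of $\infty$-categories, whereas the coCartesian monoidal structure on $\calg(\Catsa{m}{p})$ is the Lurie tensor product, and the target $(\Catsa{m}{p})^{hC_2}$ also carries the Lurie tensor product from \Cref{prop:hc2symmon}. Since these monoidal structures do not restrict to one another along the forgetful functors, the hypothetical classical lax structure (whose structure maps would be of the form $\cC^{\dbl}\times\cD^{\dbl}\to(\cC\times\cD)^{\dbl}$) does not ``lift'' to give maps $\cC^{\dbl}\otimes\cD^{\dbl}\to(\cC\otimes\cD)^{\dbl}$ for the Lurie tensor product, and your paragraph does not supply the needed coherence data. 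The paper avoids this altogether: because $\calg(\Catsa{m}{p})$ is a category of commutative algebras, its induced monoidal structure is coCartesian, and functors out of a coCartesian symmetric monoidal $\infty$-category acquire a canonical lax symmetric monoidal structure essentially for free (this is \cite[Proposition 2.4.3.9]{HA}; the paper applies it in \Cref{lem:dbl1} and then keeps track that each subsequent factor in the composite is lax symmetric monoidal). You should replace the lifting argument with this observation; it also makes the closure of $\calg(\Catsa{m}{p})^{\rig}$ under the Lurie tensor (\Cref{lem:dbl0}) the actual new content you need to verify.
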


The proof, which is assembled at the end of this section, will proceed by factoring $(-)^{\dbl}$ as a composite of several lax symmetric monoidal functors.  Let 
\[
\calg(\Catsa{m}{p})^{\rig} \subset \calg(\Catsa{m}{p})
\]
denote the full subcategory of \emph{rigid} $p$-typical $m$-semiadditively symmetric monoidal categories -- that is, those $\cC \in \calg(\Catsa{m}{p})$ satisfying the condition that every object $X\in \cC$ is dualizable.  

Before we proceed, we shall need the following general lemma about the Lurie tensor product.  

\begin{lem}\label{lem:tensor-gen}
Let $\mathcal{K}$ be a set of simplicial sets, let $\Cat_{\mathcal{K}}$ denote the $\infty$-category of $\infty$-categories which admit $\mathcal{K}$-shaped colimits (with morphisms functors preserving those colimits), and give $\Cat_{\mathcal{K}}$ a symmetric monoidal structure under $-\otimes -$, the Lurie tensor product  \cite[Corollary 4.8.1.4]{HA}.  

Then, for $\cC, \cD \in \Cat_{\mathcal{K}}$, the $\infty$-category $\cC \otimes \cD$ is generated under $\mathcal{K}$-shaped colimits by the image of the canonical functor $\cC \times \cD \to \cC \otimes \cD$.  
\end{lem}
\begin{proof}
Let $\cE \subseteq \cC \otimes \cD$ be the full subcategory generated under $\mathcal{K}$-colimits by the image of $\cC\times \cD$.  Then by construction, $\cE$ admits a functor from $\cC\times \cD$ which preserves $\mathcal{K}$-colimits separately in each variable, which extends by universal property to a $\mathcal{K}$-colimit preserving functor 
\[
u: \cC \otimes \cD \to \cE.
\]
But the composite of $u$ with the inclusion $\cE\subseteq \cC \otimes \cD$ is the identity of $\cC \otimes \cD$, again by the universal property.  It follows that the inclusion $\cE\subseteq \cC\otimes \cD$ is essentially surjective and therefore an equivalence of $\infty$-categories, as desired.  
\end{proof}

\begin{lem}\label{lem:dbl0}
The subcategory $\calg(\Catsa{m}{p})^{\rig} \subset \calg(\Catsa{m}{p})$ is closed under the symmetric monoidal structure (given by the Lurie tensor product).  
\end{lem}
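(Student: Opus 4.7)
The plan is to show directly that if $\cC, \cD \in \calg(\Catsa{m}{p})$ are both rigid, then every object of $\cC \otimes \cD$ is dualizable. I would proceed in three steps.

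First, I would observe that every object of $\cC \otimes \cD$ is an $\Sfin{m}{p}$-indexed colimit of pure tensors $X \boxtimes Y$ with $X \in \cC$ and $Y \in \cD$. This follows from the universal property of the Lurie tensor product in $\Catsa{m}{p}$: the canonical symmetric monoidal functors $\cC \to \cC \otimes \cD$ and $\cD \to \cC \otimes \cD$ preserve $\Sfin{m}{p}$-colimits, and their joint image generates $\cC \otimes \cD$ under such colimits.

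Second, I would verify that for $X \in \cC^{\dbl}$ and $Y \in \cD^{\dbl}$, the pure tensor $X \boxtimes Y$ is dualizable in $\cC \otimes \cD$ with dual $\Du(X) \boxtimes \Du(Y)$; the evaluation and coevaluation maps arise by applying the symmetric monoidal embeddings $\cC, \cD \to \cC \otimes \cD$ to those of $X$ and $Y$ and combining them multiplicatively. Since $\cC$ and $\cD$ are rigid by hypothesis, every pure tensor in $\cC \otimes \cD$ is therefore dualizable.

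The main content lies in the third step: showing that in any $\cE \in \calg(\Catsa{m}{p})$, the full subcategory $\cE^{\dbl}$ of dualizable objects is closed under $\Sfin{m}{p}$-colimits. Given a diagram $X \colon A \to \cE^{\dbl}$ with $A \in \Sfin{m}{p}$, I would define the dual of $\push{A}(X)$ to be $\push{A}(\Du X)$ (with $\Du$ applied pointwise) and construct the evaluation map as the composite
\[
\push{A}(\Du X) \otimes \push{A}(X) \simeq \push{A \times A}(\Du X \boxtimes X) \to \push{A}(\Du X \otimes X) \to \push{A}(\one) \to \one,
\]
where the first equivalence uses distributivity of $\otimes$ over $\Sfin{m}{p}$-colimits, the second is the wrong-way map induced along the diagonal $\Delta \colon A \to A \times A$ via the norm equivalence $\Delta_! \simeq \Delta_*$ (which applies because the fibers of $\Delta$ are path spaces of $A$, hence $p$-typical and $(m-1)$-truncated), the third applies pointwise evaluation, and the last is integration of the unit. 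The coevaluation is constructed symmetrically. Combining these three steps then yields the lemma.

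The main obstacle is the third step, specifically the verification of the triangle identities: while the construction of the dual and the evaluation and coevaluation maps is essentially formal once higher semiadditivity is available, checking that they satisfy the zig-zag equations requires a careful bookkeeping of the Fubini and base-change compatibilities encoded in the higher semiadditive structure, using the full $p$-typical $m$-commutative pre-monoid structure on $\cE$ rather than just the norm isomorphism in isolation.
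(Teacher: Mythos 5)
Your argument has the same two-step skeleton as the paper's: show that pure tensors of dualizable objects are dualizable, then show that the dualizable objects form a subcategory closed under $\Sfin{m}{p}$-colimits (so that the colimit-generated $\cC\otimes\cD$ is entirely dualizable). Your first two steps are fine and match the paper's (the paper argues step two slightly more economically by observing that the symmetric monoidal functor $\cC\times\cD\to\cC\otimes\cD$ automatically preserves dualizability, but your explicit construction of the duality data is an equivalent route).

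Where you diverge is the third step. The paper does \emph{not} prove closure of $\cE^{\dbl}$ under $\Sfin{m}{p}$-colimits; it cites a result of Carmeli--Schlank--Yanovski for the presentable case, and adds a footnote explaining how to reduce the general (small) case to the presentable one via the $\Sfin{m}{p}$-colimit-preserving symmetric monoidal Yoneda embedding $\cC\hookrightarrow\Fun^{\Sfin{m}{p}}(\cC^{\op},\Spc)$. You instead attempt to reprove the closure statement from scratch. Your proposed duality data (dual $\push{A}(\Du X)$, evaluation built from the Gysin map along the diagonal $\Delta\colon A\to A\times A$ using the norm $\Delta_!\simeq\Delta_*$, then pointwise evaluation and integration of the unit) is essentially the construction that the cited reference carries out, so the plan is sound. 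But, as you yourself flag, you have not verified the triangle identities, and that verification is precisely the nontrivial content of the cited result --- it requires a careful use of base-change and Fubini for the higher semiadditive integrals, not merely the existence of the norm isomorphism. So as written your proof has a genuine gap at the place where the paper leans on a citation; one either has to complete that verification (nontrivial, but doable) or do as the paper does and invoke the known result together with the presentability reduction, which your proposal does not address.
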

\begin{proof}
It suffices to show that if $\cC,\cD \in \calg(\Catsa{m}{p})^{\rig}$, then their tensor product $\cC\otimes \cD$ is also rigid.   Note that by construction (\cite[Proposition 4.8.1.10]{HA}), there is a symmetric monoidal functor $\cC \times \cD \to \cC\otimes \cD$, and so any object in the image of $\cC\times \cD$ is dualizable.  But $\cC\otimes \cD$ is generated under $\Sfin{m}{p}$-colimits by the image of $\cC\times \cD$ (\Cref{lem:tensor-gen}), so the lemma follows by noting that for categories in $\calg(\Catsa{m}{p})$, dualizable objects are closed under $\Sfin{m}{p}$-colimits \cite[Corollary 2.7]{Cyclotomic}\footnote{The reference treats the presentable case, and the general case follows from it by using the fully faithful symmetric monoidal $\Sfin{m}{p}$-colimit preserving embedding $\cC \hookrightarrow \mathrm{Fun}^{\Sfin{m}{p}}(\cC^{\op},\Spc )$, where the superscript denotes $\Sfin{m}{p}$-limit preserving functors.}. 
\end{proof}

It follows that $\calg(\Catsa{m}{p})^{\rig}$ inherits a symmetric monoidal structure under the tensor product.  

\begin{lem}\label{lem:dbl1}
The assignment $\cC \mapsto \cC^{\dbl}$ determines a lax symmetric monoidal functor 
\[
\calg(\Catsa{m}{p})\to \calg(\Catsa{m}{p})^{\rig}.
\]
\end{lem}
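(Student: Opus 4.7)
The plan is to realize $(-)^{\dbl}$ as the right adjoint to the inclusion
\[
\iota\colon \calg(\Catsa{m}{p})^{\rig} \hookrightarrow \calg(\Catsa{m}{p}),
\]
which is symmetric monoidal by \Cref{lem:dbl0}. Since the right adjoint of a symmetric monoidal functor of (large) symmetric monoidal $\infty$-categories is canonically lax symmetric monoidal (a formal consequence of doctrinal adjunction), the desired lax symmetric monoidal structure on $(-)^{\dbl}$ follows immediately from this identification.

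Before establishing the adjunction, I would first verify that $(-)^{\dbl}$ is well-defined as a functor $\calg(\Catsa{m}{p}) \to \calg(\Catsa{m}{p})^{\rig}$. This requires three checks. First, $\cC^{\dbl}\subseteq \cC$ inherits a symmetric monoidal structure from $\cC$ because the unit is dualizable and dualizable objects are closed under the tensor product. Second, $\cC^{\dbl}$ lies in $\Catsa{m}{p}$, and the inclusion $\cC^{\dbl} \hookrightarrow \cC$ preserves $\Sfin{m}{p}$-colimits; this follows from the closure of dualizable objects under $\Sfin{m}{p}$-colimits in $\cC$, already invoked in the proof of \Cref{lem:dbl0}. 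Third, any morphism $F\colon \cC \to \cD$ in $\calg(\Catsa{m}{p})$ restricts to a morphism $F^{\dbl}\colon \cC^{\dbl} \to \cD^{\dbl}$ in $\calg(\Catsa{m}{p})^{\rig}$, since symmetric monoidal functors preserve dualizable objects.

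Next, I would verify the adjunction itself. For any $\cD \in \calg(\Catsa{m}{p})^{\rig}$ and any $\cC \in \calg(\Catsa{m}{p})$, every morphism $\iota\cD \to \cC$ in $\calg(\Catsa{m}{p})$ factors uniquely through $\cC^{\dbl}\hookrightarrow \cC$: indeed, every object of $\cD$ is dualizable, so its image in $\cC$ under a symmetric monoidal functor is dualizable as well. Since the inclusion $\cC^{\dbl}\hookrightarrow \cC$ is fully faithful and preserves $\Sfin{m}{p}$-colimits, this factorization is a factorization in $\calg(\Catsa{m}{p})^{\rig}$, furnishing a natural equivalence
\[
\mathrm{Map}_{\calg(\Catsa{m}{p})}(\iota\cD, \cC) \simeq \mathrm{Map}_{\calg(\Catsa{m}{p})^{\rig}}(\cD, \cC^{\dbl}).
\]

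The main subtlety lies in invoking the doctrinal adjunction principle rigorously at the $\infty$-categorical level. This can be handled either by appealing to standard references on lax symmetric monoidal structures transferred across adjunctions, or by directly constructing the lax structure maps $\cC^{\dbl} \otimes \cD^{\dbl} \to (\cC \otimes \cD)^{\dbl}$ from the universal property of the Lurie tensor product, using that the canonical symmetric monoidal functor $\cC \times \cD \to \cC\otimes \cD$ restricts to a bifunctor $\cC^{\dbl} \times \cD^{\dbl} \to (\cC \otimes \cD)^{\dbl}$ which is $\Sfin{m}{p}$-cocontinuous separately in each variable.
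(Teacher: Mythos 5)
Your proposal is correct, and it gives a genuinely different route to the lax symmetric monoidal structure than the paper's. You first show that $(-)^{\dbl}$ is right adjoint to the symmetric monoidal inclusion $\iota\colon \calg(\Catsa{m}{p})^{\rig}\hookrightarrow\calg(\Catsa{m}{p})$ (using that a symmetric monoidal functor out of a rigid category factors through the dualizables, together with \Cref{lem:dbl0}), and then invoke doctrinal adjunction. The paper instead bypasses the adjunction entirely: it observes that both $\calg(\Catsa{m}{p})$ and $\calg(\Catsa{m}{p})^{\rig}$ carry the \emph{coCartesian} symmetric monoidal structure (being $\infty$-categories of commutative algebras, with the target closed under the tensor product by \Cref{lem:dbl0}), and then cites \cite[Proposition 2.4.3.9]{HA}, which grants an automatic lax symmetric monoidal structure to any functor out of a unital $\infty$-category with coCartesian monoidal structure. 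The paper's route is shorter precisely because it does not require verifying the adjunction; yours is slightly more informative in that it pins down the universal property of $(-)^{\dbl}$, and it would work verbatim in settings where the monoidal structure is not coCartesian. Note that since the structure here is coCartesian, the lax structure is essentially unique, so the two arguments produce the same data.
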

\begin{proof}
First we note:
\begin{enumerate}
    \item The $\infty$-category $\calg(\Catsa{m}{p})$ can be identified with the (non-full) subcategory of $\calg(\Cat_{\infty})$ spanned by $p$-typical $m$-semiadditive $\infty$-categories in which the tensor product distributes over $\Sfin{m}{p}$-colimits, and symmetric monoidal functors which preserve $\Sfin{m}{p}$-colimits.  
    \item There is a functor $\calg(\Cat_{\infty}) \to \calg(\Cat_{\infty})^{\rig}$ given by extracting dualizable objects.
\end{enumerate}
Thus, to construct the functor $\calg(\Catsa{m}{p})\to \calg(\Catsa{m}{p})^{\rig}$, it suffices to check that the functor of (2) restricts appropriately; in particular we check:
\begin{itemize}
    \item For $\cC \in \calg(\Catsa{m}{p})$, the $\infty$-category $\cC^{\dbl}$ is also $p$-typically $m$-semiadditive.  This is because $\cC^{\dbl} \subset \cC$ is closed under $\Sfin{m}{p}$-colimits \cite[Proposition 2.5]{Cyclotomic}. 
    \item The tensor product in $\cC^{\dbl}$ distributes over $\Sfin{m}{p}$-colimits; this is because the fully faithful embedding $\cC^{\dbl}\subseteq \cC$ is symmetric monoidal and $\Sfin{m}{p}$-colimit preserving, and the tensor product in $\cC$ has this property. 
    
    \item That for $\cC,\cD \in \calg(\Catsa{m}{p})$, a $\Sfin{m}{p}$-colimit preserving symmetric monoidal functor $\cC\to \cD$ restricts to a $\Sfin{m}{p}$-colimit preserving functor on the dualizable objects. This again follows from the fact that $\cC^{\dbl}$ and $\cD^{\dbl}$ are closed under $\Sfin{m}{p}$-colimits in $\cC$ and $\cD$ respectively.
\end{itemize}

It remains to see that this restricted functor is lax symmetric monoidal.  But note that the symmetric monoidal structure on $\calg(\Catsa{m}{p})$ is coCartesian (because it is a category of commutative algebras), and the subcategory $\calg(\Catsa{m}{p})^{\rig}$ is closed under the monoidal structure by \Cref{lem:dbl0}.  Thus, the symmetric monoidal structure on $\calg(\Catsa{m}{p})^{\rig}$ is also coCartesian.  Since the source $\calg(\Catsa{m}{p})$ of the functor is unital, it follows from \cite[Proposition 2.4.3.9]{HA} that the functor is lax symmetric monoidal.  
\end{proof}

We have seen that the construction $\cC \to \cC^{\op}$ defines a symmetric monoidal $C_2$-action on $\Cat_{\infty}$, and that this endows the subcategory $\Catsa{m}{p} \subset \Cat_{\infty}$ with a symmetric monoidal $C_2$-action (cf. \Cref{prop:hc2symmon}).  It follows that $\calg(\Catsa{m}{p})$ also admits a $C_2$-action by $\cC \mapsto \cC^{\op}$.  

\begin{lem}\label{lem:dbl2}
The $C_2$-action on $\calg(\Catsa{m}{p})$ by $\cC \mapsto \cC^{\op}$ restricts to a $C_2$-action on the full subcategory $\calg(\Catsa{m}{p})^{\rig}$, and this restricted action admits a canonical trivialization.  Consequently, there is a lax symmetric monoidal functor 
\[
\calg(\Catsa{m}{p})^{\rig} \to (\calg(\Catsa{m}{p})^{\rig})^{hC_2}.
\]
\end{lem}
\begin{proof}
To restrict the action, it suffices to note that if $\cC$ is rigid, then $\cC^{\op}$ is rigid.  
We have seen (e.g. in the proof of \Cref{lem:dbl1}) that $\calg(\Catsa{m}{p}) \subset \calg(\Cat_{\infty})$ is the inclusion of a subcategory, and it follows from the construction of the $C_2$-action (proof of \Cref{prop:hc2symmon}) that this inclusion is $C_2$-equivariant with respect to the $\op$ action.  Hence, one also has that $\calg(\Catsa{m}{p})^{\rig} \subset \calg(\Cat_{\infty})^{\rig}$ is a $C_2$-equivariant inclusion of a subcategory. 

Now, \cite[Theorem 5.11]{HeineLopezSpitzweck} provides a trivialization of the $C_2$-action on $\calg(\Cat_{\infty})^{\rig}$. Since the subcategory $\calg(\Catsa{m}{p})^{\rig}$ include all the equivalences, such a trivialization necessarily restricts to a trivialization of the $C_2$-action on $\calg(\Catsa{m}{p})^{\rig}$. This trivialization in turn provides a functor 
\[\calg(\Catsa{m}{p})^{\rig} \to (\calg(\Catsa{m}{p})^{\rig})^{BC_2}\simeq (\calg(\Catsa{m}{p})^{\rig})^{hC_2}.
\]
It remains to show that this functor admits a lax symmetric monoidal structure. As in the proof of \Cref{lem:dbl1}, it would suffice to show that the symmetric monoidal structure on $(\calg(\Catsa{m}{p})^{\rig})^{hC_2}$ is coCartesian. Since $\calg(\Catsa{m}{p})^{\rig}$ is given the coCartesian structure, this follows from the general fact that coCartesian symmetric monoidal $\infty$-categories are closed under limits in $\calg(\Cat_\infty)$.
\end{proof}

Now we can finish the proof of \Cref{prop:extractdbl}:

\begin{proof}[Proof of \Cref{prop:extractdbl}]
We construct the desired functor $(-)^{\dbl}$ as a composite
\begin{align*}
\calg(\Catsa{m}{p}) \to \calg(\Catsa{m}{p})^{\rig} &\to \calg(\Catsa{m}{p})^{\rig})^{hC_2} \\
&\to \calg(\Catsa{m}{p})^{hC_2}\to (\Catsa{m}{p})^{hC_2}.
\end{align*}
The first two functors, and their lax symmetric monoidality, are \Cref{lem:dbl1} and \Cref{lem:dbl2}, respectively.  The third functor exists and is symmetric monoidal because, by construction, one has a symmetric monoidal $C_2$-equivariant inclusion $\calg(\Catsa{m}{p})^{\rig} \subset \calg(\Catsa{m}{p})$.  Finally, the last functor is induced by the forgetful functor $\calg(\Catsa{m}{p}) \to \Catsa{m}{p}$, which is $C_2$-equivariantly symmetric monoidal by \Cref{prop:hc2symmon}.  

 

\end{proof}

\subsubsection{Symmetric bilinear forms} \label{subsub:sbf}

We may now refine $\QF(\cC)$ to a higher commutative monoid. 

\begin{defn} \label{defn:QF}
Let $\mdef{\QF^{(-)}}\colon \calg(\Catsa{m}{p}) \to \CMon{m}{p}(\Spc)$ be the composite 
\begin{align*}
\calg(\Catsa{m}{p}) \oto{(-)^\dbl} (\Catsa{m}{p})^{hC_2} &\oto{\InvF} \CMon{m}{p}(\Cat_\infty)^{BC_2}\\
&\oto{(-)^{hC_2}} \CMon{m}{p}(\Cat_\infty) \oto{(-)^\simeq} \CMon{m}{p}(\Spc). 
\end{align*}
\end{defn}
In particular, a point of the space $\QF^{A}(\cC)$ is a pair $(X,b)$ where $X\in \cC^A$ and $b$ is a non-degenerate \emph{symmetric} bilinear form on $X$, with respect to the point-wise tensor product of $\cC^A$. 
We also have the expected integration maps for $\QF^{(-)}(\cC)$:
\begin{prop}
\label{push_quad_correct}
Let $\cC\in \calg(\Catsa{m}{p})$ and let $f\colon A\to B \in \Sfin{m}{p}$. The map 
\[
\push{f}\colon \QF^A(\cC) \to \QF^B(\cC)  
\]
agrees with the map constructed in \Cref{def_push_quadratic_form}. 
\end{prop}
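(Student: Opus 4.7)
The plan is to unwind Definition \ref{defn:QF} and apply Proposition \ref{formula_push_anti_inv_fixed} to $(\cC^\dbl, \Du)$. Since the operations of taking $C_2$-fixed points and maximal subgroupoids used to define $\QF^{(-)}(\cC)$ are pointwise in $\CMon{m}{p}$, it will suffice to compare the integration map of Construction \ref{def_push_quadratic_form} with the one induced on $\InvF(\cC^\dbl, \Du)$ by its $m$-commutative monoid structure. The first step is to translate between symmetric bilinear forms and self-duality data via the mate correspondence $b \leftrightarrow \phi = b^\vee$, so that a point of $\QF^A(\cC)$ becomes an object $(\phi\colon X \iso \Du X)$ in $\InvF((\cC^A)^\dbl, \Du)^{hC_2}$.

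By Proposition \ref{formula_push_anti_inv_fixed}, the integration of $\phi$ along $f\colon A\to B$ will be the composite
\[
\psi \colon f_! X \xrightarrow{f_! \phi} f_!(\Du X) \xrightarrow{\sim} \Du(f_* X) \xrightarrow{\Du(\Nm_f)} \Du(f_! X),
\]
where the middle equivalence is the compatibility of $f_!$ with $\Du$ furnished by the $m$-commutative monoid structure on $(\cC, \Du) \in \CMon{m}{p}((\Catsa{m}{p})^{hC_2})$. On the other hand, Construction \ref{def_push_quadratic_form} specifies $\push{f} b$ as the mate of
\[
f_! X \xrightarrow{f_! b^\vee} f_!(\Du X) \xrightarrow{\Nm_f} f_*(\Du X) \xrightarrow{\sim} \Du(f_! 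X),
\]
whose terminal equivalence is the analogous compatibility of $f_*$ with $\Du$. Both composites begin with $f_! \phi = f_! b^\vee$, so the proof reduces to checking commutativity of the square
\[
\begin{tikzcd}
f_!(\Du X) \arrow[r, "\Nm_f"] \arrow[d, "\wr"'] & f_*(\Du X) \arrow[d, "\wr"] \\
\Du(f_* X) \arrow[r, "\Du(\Nm_f)"'] & \Du(f_! X).
\end{tikzcd}
\]

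The main obstacle will be verifying commutativity of this square. Both vertical equivalences arise from $\Du$ intertwining $f_!$ with $f_*$, which reflects the fact that $\Du$ is a $\Sfin{m}{p}$-colimit-preserving (equivalently, $\Sfin{m}{p}$-limit-preserving) functor into $\cC^\op$. By Example \ref{exm:semiaddinv}, each of these compatibilities can be factored as a norm map followed by a single piece of anti-involution data, so the square becomes a naturality statement for $\Du$ applied to the norm transformation $\Nm_f\colon f_! \to f_*$. This naturality is intrinsic to the coherent higher commutative monoid structure on $(\cC, \Du)$ provided by Corollary \ref{prop:canonicalhighercomm}; unpacking it carefully and tracking the equivalences through the pullback defining $\WInvF$ will complete the proof.
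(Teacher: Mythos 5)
Your proposal follows the same overall strategy as the paper's proof — reduce to \Cref{formula_push_anti_inv_fixed} via $\QF^{(-)}(\cC) \simeq \InvF(\cC^\dbl,\Du)^{hC_2}$ — but you are more careful than the paper, whose proof is a single sentence asserting the identification is ``immediate.'' You correctly observe that the two composites do not literally coincide: \Cref{formula_push_anti_inv_fixed} gives $f_!\Du(X) \simeq \Du(f_*X) \xrightarrow{\Du(\Nm_f)} \Du(f_!X)$, while \Cref{def_push_quadratic_form} gives $f_!\Du(X) \xrightarrow{\Nm_f} f_*\Du(X) \simeq \Du(f_!X)$, and identifying these amounts to the commutative square you display. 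That square is the key point, and it is a genuine (if routine) check which the paper's ``immediately'' glosses over. Your justification — that both vertical equivalences are anti-involution data, that \Cref{exm:semiaddinv} factors the compatibility witness as a norm map followed by anti-involution data, and hence the square is a naturality statement for $\Du$ against $\Nm_f$ — is the right explanation. The slightly imprecise part of your write-up is the closing sentence: ``unpacking it carefully and tracking the equivalences through the pullback defining $\WInvF$'' is a promise rather than an argument. To close the loop cleanly you should say what coherence is actually being invoked: the $C_2$-equivariance of $\cC$ as an object of $(\Catsa{m}{p})^{hC_2}$ means $\Du$ preserves $\Sfin{m}{p}$-colimits as a map $\cC \to \cC^{\op}$, and such a functor necessarily intertwines the canonically-defined norm transformations on the two sides, up to the duality isomorphisms — precisely the commutativity of your square. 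With that made explicit, your proof is correct and in fact more informative than the paper's.
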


\begin{proof}
Since $\QF^{(-)}(\cC)\simeq (\InvF(\cC^\dbl,\Du)^{\simeq})^{hC_2}$ the claim follows immediately from \Cref{formula_push_anti_inv_fixed}.
\end{proof}

\subsubsection{Higher semiadditive Grothendieck-Witt theory}\label{subsub:gpcomplete}
To produce the higher semiadditive version of Grothendieck-Witt theory from $\QF^{(-)}(\cC)$, all that is left is to perform group completion level-wise. For this, we first need to promote the spaces $\QF^{A}(\cC)$ to commutative monoids in spaces in a way compatible with the higher commutative monoid structure of $\QF^{(-)}(\cC)$. In fact, such an extension is automatic: 

\begin{prop}\label{prop:cmoncmon}
For every $m\ge 0$, there is a canonical equivalence
\[
\CMon{m}{p}(\Spc) \simeq \CMon{m}{p}(\mathrm{CMon}(\Spc))
\]
\end{prop}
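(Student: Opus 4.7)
The plan is to reduce the statement to the idempotence of the algebras $\Span(\Fin)$ and $\Span(\Sfin{m}{p})$ supplied by Harpaz's theorem (\Cref{prop:highercommsemiadd}). Specifically, by part (3) of that proposition applied to $\cC = \Spc$ and $\cC = \mathrm{CMon}(\Spc)$ (both of which are cocomplete, hence admit $\Sfin{m}{p}$-colimits), we have equivalences in $\Cat_{\Sfin{m}{p}}$
\[
\CMon{m}{p}(\Spc) \simeq \Spc \otimes \Span(\Sfin{m}{p}), \qquad \CMon{m}{p}(\mathrm{CMon}(\Spc)) \simeq \mathrm{CMon}(\Spc) \otimes \Span(\Sfin{m}{p}).
\]
The $m=0$ case of \Cref{prop:highercommsemiadd}(3) (where $\Sfin{0}{p} = \Fin$) further identifies $\mathrm{CMon}(\Spc) \simeq \Spc \otimes \Span(\Fin) \simeq \Span(\Fin)$, exhibiting $\mathrm{CMon}(\Spc)$ as the classical semiadditivization of $\Spc$. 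Combining these, the desired equivalence reduces to showing that the natural map
\[
\mathrm{CMon}(\Spc) \otimes \Span(\Sfin{m}{p}) \;\simeq\; \Span(\Fin) \otimes \Span(\Sfin{m}{p}) \;\longrightarrow\; \Span(\Sfin{m}{p})
\]
is an equivalence.

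The key input for this last step is that $\Span(\Fin)$ is an idempotent algebra (the $m=0$ case of \Cref{prop:highercommsemiadd}(1)) and that $\Span(\Sfin{m}{p})$, being $p$-typically $m$-semiadditive (and in particular $0$-semiadditive, i.e., semiadditive), is canonically a module over $\Span(\Fin)$. The inclusion $\Fin \hookrightarrow \Sfin{m}{p}$ realizes the structure map $\Span(\Fin) \to \Span(\Sfin{m}{p})$ as a map of idempotent algebras. By the general fact that for an idempotent algebra $A$ acting on a module $M$ the action map $A \otimes M \to M$ is an equivalence (see \cite[Proposition 4.8.2.10]{HA}), we get $\Span(\Fin) \otimes \Span(\Sfin{m}{p}) \simeq \Span(\Sfin{m}{p})$, as desired.

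The main technical obstacle is keeping the ambient symmetric monoidal $\infty$-categories straight: the tensor product witnessing $\mathrm{CMon}(\Spc) \simeq \Span(\Fin)$ is initially formed in $\Cat_{\Fin}$, whereas the tensor products governing $\CMon{m}{p}$ take place in $\Cat_{\Sfin{m}{p}}$. To reconcile this, I would note that the forgetful functor $\Cat_{\Sfin{m}{p}} \to \Cat_{\Fin}$ is lax symmetric monoidal, and use that $\Span(\Sfin{m}{p})$ already admits all $\Fin$-indexed coproducts (being semiadditive), so the $\Span(\Fin)$-module structure is intrinsic; the idempotence argument can then be applied coherently in $\Cat_{\Sfin{m}{p}}$. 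Alternatively, one can bypass the tensor-product reformulation entirely and give a direct proof: a $p$-typical $m$-commutative monoid $M \colon \Span(\Sfin{m}{p})^{\op} \to \Spc$ restricted along $\Span(\Fin)^{\op} \hookrightarrow \Span(\Sfin{m}{p})^{\op}$ endows $M(\pt)$ with a commutative monoid structure, and one checks that the resulting refinement $\Span(\Sfin{m}{p})^{\op} \to \mathrm{CMon}(\Spc)$ is itself a $p$-typical $m$-commutative monoid, yielding an inverse to the forgetful functor $\CMon{m}{p}(\mathrm{CMon}(\Spc)) \to \CMon{m}{p}(\Spc)$.
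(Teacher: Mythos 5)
Your core idea is the same as the paper's: reduce the statement to an idempotence argument, using the fact that the algebra classifying $0$-semiadditivity acts trivially on anything already semiadditive, and that $\CMon{m}{p}(\Spc)$ is semiadditive. That instinct is correct. However, there are two problems with how you set it up.

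First, the intermediate identification $\mathrm{CMon}(\Spc) \simeq \Span(\Fin)$ is false as stated: $\Span(\Fin)$ is a small $(2,1)$-category and $\mathrm{CMon}(\Spc)$ is a large presentable $\infty$-category. What is true (and what \Cref{prop:highercommsemiadd}(3) at $m=0$ gives) is $\mathrm{CMon}(\Spc) \simeq \Spc \otimes \Span(\Fin)$ inside $\Cat_{\Fin}$. The further collapse to $\Span(\Fin)$ would require $\Spc$ to be the unit of $\Cat_{\Fin}$, which it is not -- the unit there is $\Fin$. Your overall reduction to the equivalence $\Span(\Fin) \otimes \Span(\Sfin{m}{p}) \simeq \Span(\Sfin{m}{p})$ still makes formal sense if one drops the extra collapse and keeps a $\Spc \otimes (-)$ outside, but as written there is a genuine error.

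Second, and more seriously, the tensor product $\Span(\Fin) \otimes \Span(\Sfin{m}{p})$ that your reduction lands on is taken in $\Cat_{\Sfin{m}{p}}$, and $\Span(\Fin)$ is not obviously an object of that category: nothing guarantees that $\Span(\Fin)$ admits $\Sfin{m}{p}$-indexed colimits for $m\ge 1$. Your closing remark about the forgetful functor $\Cat_{\Sfin{m}{p}} \to \Cat_{\Fin}$ being lax symmetric monoidal gestures at the issue but does not resolve it; you would need to actually verify that the idempotence of $\Span(\Fin)$ in $\Cat_{\Fin}$ can be transported to the needed statement in $\Cat_{\Sfin{m}{p}}$, and that is precisely the nontrivial part. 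The paper sidesteps all of this by staying in $\Pr$: it uses the (standard, presentable) identification $\CMon{m}{p}(\mathrm{CMon}(\Spc)) \simeq \mathrm{CMon}(\Spc) \otimes_{\Pr} \CMon{m}{p}(\Spc)$ together with the fact that $\mathrm{CMon}(\Spc)$ itself, rather than $\Span(\Fin)$, is the idempotent algebra in $\Pr$ classifying $0$-semiadditivity (\cite[Proposition 5.3.1]{AmbiHeight}). Since $\CMon{m}{p}(\Spc)$ is a presentable semiadditive $\infty$-category, the action map is an equivalence, and the conclusion is immediate with none of the bookkeeping about incompatible ambient tensor products. Your second, direct approach (restricting along $\Span(\Fin)^\op \hookrightarrow \Span(\Sfin{m}{p})^\op$ to lift a Segal object to a $\mathrm{CMon}(\Spc)$-valued one) is a reasonable sketch, but as presented it is a plan rather than a proof; the coherence of the lift is exactly what the idempotence argument packages cleanly.
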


\begin{proof}
We have $\CMon{m}{p}(\mathrm{CMon}(\Spc))\simeq \mathrm{CMon}(\Spc)\otimes \CMon{m}{p}(\Spc)$, where $\otimes$ denotes the tensor product of presentable $\infty$-categories \cite[\S 4.8]{HA}. Since $\mathrm{CMon}(\Spc)$ is an idempotent algebra in $\Pr$ classifying $0$-semiadditivity (see, e.g., \cite[Proposition 5.3.1]{AmbiHeight}), the result follows from the fact that for every $m\ge 0$, $\CMon{m}{p}(\Spc)$ is a semiadditive $\infty$-category. 
\end{proof}

Recall that the underlying space functor $\Omega^\infty\colon\Sp\to \Spc$ refines to a functor $\Sp \to \mathrm{CMon}(\Spc)$, whose left adjoint 
$(-)^\gp \colon \mathrm{CMon}(\Spc) \to \Sp$ is the \emph{group completion functor}.

\begin{defn}\label{defn:GW}
For $m\ge 0$, define the functor
\[
\mdef{\GW^{(-)}}\colon \calg(\Catsa{m}{p})\to \PMon{m}{p}(\Sp)
\]
to be the composite
\begin{align*}
\calg(\Catsa{m}{p})\oto{\QF^{(-)}} \CMon{m}{p}(\Spc) &\simeq \CMon{m}{p}(\mathrm{CMon}(\Spc))\\
&\into \PMon{m}{p}(\mathrm{CMon}(\Spc)) \oto{(-)^\gp} \PMon{m}{p}(\Sp).
\end{align*}
\end{defn}

Namely, $\GW^A(\cC)$ is the connective spectrum obtained from
$ \QF(\cC^A) \in \mathrm{CMon}(\Spc)$ via group-completion. Note that the summation operation in the monoid $\QF(\cC^A)$, and hence in the spectrum $\GW^{A}(\cC)$, is given by direct sum of symmetric bilinear forms.

\begin{rem}
Although $\QF^{(-)}(\cC)$ is a $p$-typical $m$-commutative monoid, the functor $\GW^{(-)}(\cC)$ need not satisfy the Segal condition in general, and hence it is only a $p$-typical $m$-commutative \emph{pre-monoid}.  
\end{rem}

\subsubsection{Multiplicative structure on $\GW$} \label{subsub:mult}
Recall that the $\infty$-category $\PMon{m}{p}(\Sp) = \Fun(\Span(\Sfin{m}{p}),\Sp)$ admits a symmetric monoidal structure given by Day convolution. In this section, we show that for every symmetric monoidal $p$-typically $m$-semiadditive $\infty$-category $\cC$, the Grothendieck-Witt object $\GW^{(-)}(\cC)\in \PMon{m}{p}(\Sp)$ admits a canonical structure of a commutative (a.k.a. $\EE_\infty$-) algebra in $\PMon{m}{p}(\Sp)$.  To accomplish this, we refer to \Cref{defn:GW}, in which the functor 
\[\GW^{(-)}: \calg(\Catsa{m}{p}) \to \PMon{m}{p}(\Sp)\] is defined as a certain composite: we will show that each functor in the composite, and consequently $\GW^{(-)}$ itself, is a lax symmetric monoidal functor.  

\begin{prop}\label{prop:QFring}
The functor $$\QF^{(-)} \colon \calg(\Catsa{m}{p}) \to \CMon{m}{p}(\Spc)$$ of \Cref{defn:QF} is lax symmetric monoidal.  Hence, for any symmetric monoidal $p$-typically $m$-semiadditive $\infty$-category $\cC$, the space $\QF(\cC)$ acquires a canonical structure of a commutative algebra in $\CMon{m}{p}(\Spc)$.  
\end{prop}
\begin{proof}
We check that each functor in the composite of \Cref{defn:QF} is lax symmetric monoidal.
\begin{enumerate}
    \item The functor $(-)^{\mathrm{dbl}}$ is lax symmetric monoidal by \Cref{prop:extractdbl}.
    \item The functor $\InvF$ is lax symmetric monoidal by \Cref{fixed_lax}.  
    \item The functor $(-)^{hC_2}$ is lax symmetric monoidal because it is right adjoint to the symmetric monoidal functor $\CMon{m}{p}(\Cat_{\infty}) \to \CMon{m}{p}(\Cat_{\infty})^{BC_2}$ which takes the constant functor on $BC_2$.
    \item The functor $(-)^{\simeq}$ is lax symmetric monoidal because it is right adjoint to the symmetric monoidal inclusion $\CMon{m}{p}(\Spc) \subset \CMon{m}{p}(\Cat_{\infty})$.  
\end{enumerate}
\end{proof}

\begin{cor}\label{cor:GWring}
The functor 
\[
\GW^{(-)}: \calg(\Catsa{m}{p}) \to \PMon{m}{p}(\Sp)
\]
is lax symmetric monoidal.  Hence, for any symmetric monoidal $p$-typically $m$-semiadditive $\infty$-category $\cC$, the spectrum $\GW(\cC)$ acquires a canonical structure of a commutative algebra in $\PMon{m}{p}(\Sp)$.  
\end{cor}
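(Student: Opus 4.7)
The plan is to verify that each of the four functors in the composite defining $\GW^{(-)}$ (cf.\ \Cref{defn:GW}) is lax symmetric monoidal, so that their composition inherits such a structure; the second claim then follows immediately by evaluating at $\cC$ and unwinding the meaning of a commutative algebra in $\PMon{m}{p}(\Sp)$ equipped with the Day convolution symmetric monoidal structure.

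First, $\QF^{(-)}\colon \calg(\Catsa{m}{p}) \to \CMon{m}{p}(\Spc)$ is lax symmetric monoidal by the preceding proposition. Next, the canonical equivalence
\[
\CMon{m}{p}(\Spc) \simeq \CMon{m}{p}(\mathrm{CMon}(\Spc))
\]
of \Cref{prop:cmoncmon} is symmetric monoidal: by the second part of \Cref{cmon_pmon_symm_mon} applied with $\cC = \mathrm{CMon}(\Spc)$, the right-hand side identifies symmetric monoidally with $\CMon{m}{p}(\Spc) \otimes \mathrm{CMon}(\Spc)$, and since $\CMon{m}{p}(\Spc)$ is semiadditive (and $\mathrm{CMon}(\Spc)$ is the idempotent algebra for semiadditivity) this tensor product is in turn symmetric monoidally equivalent to $\CMon{m}{p}(\Spc)$.

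Third, the inclusion $\CMon{m}{p}(\mathrm{CMon}(\Spc)) \hookrightarrow \PMon{m}{p}(\mathrm{CMon}(\Spc))$ is lax symmetric monoidal because it is the right adjoint of the symmetric monoidal localization $L$ of \Cref{cmon_pmon_symm_mon}. Finally, the group completion functor $(-)^{\gp}\colon \mathrm{CMon}(\Spc) \to \Sp$ is a symmetric monoidal localization (onto connective spectra, composed with the inclusion into $\Sp$), and post-composition with a symmetric monoidal functor preserves lax symmetric monoidal structures with respect to Day convolution. Assembling these four observations gives the lax symmetric monoidal refinement of $\GW^{(-)}$.

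The only step I would expect to require some care is the symmetric monoidality of the equivalence in \Cref{prop:cmoncmon}, since one must ensure that the idempotent-algebra identification is compatible with the Day-convolution symmetric monoidal structures on both sides; this is exactly what \Cref{cmon_pmon_symm_mon} provides. Once that is in hand, the rest is a formal concatenation of lax symmetric monoidal functors.
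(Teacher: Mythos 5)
Your proof is correct and takes essentially the same approach as the paper: both arguments factor $\GW^{(-)}$ exactly as in \Cref{defn:GW} and verify that each of the four constituent functors is lax symmetric monoidal, citing the preceding proposition for $\QF^{(-)}$, \Cref{prop:cmoncmon} together with \Cref{cmon_pmon_symm_mon} for the middle equivalence and the localization, and the (lax) symmetric monoidality of post-composition for Day convolution for the group-completion step. Your slight elaboration of step two (unwinding the tensor product with the idempotent algebra $\mathrm{CMon}(\Spc)$) is a harmless expansion of what the paper leaves implicit.
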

\begin{proof}
We check that the functors in \Cref{defn:GW} are lax symmetric monoidal:
\begin{enumerate}
    \item The functor $\QF^{(-)}$ is lax symmetric monoidal by the previous proposition.  
    \item The equivalence $\CMon{m}{p}(\Spc) \simeq \CMon{m}{p}(\mathrm{CMon}(\Spc))$ is symmetric monoidal by combining (the proof of) \Cref{prop:cmoncmon} with \Cref{cmon_pmon_symm_mon}.
    \item The inclusion $\CMon{m}{p}(\mathrm{CMon} (\Spc)) \into \PMon{m}{p}(\mathrm{CMon} (\Spc))$ is lax symmetric monoidal because it is right adjoint to a symmetric monoidal localization (cf. \Cref{cmon_pmon_symm_mon}).  
    \item The functor $(-)^{\mathrm{gp}}: \mathrm{CMon} (\Spc) \to \Sp$ is symmetric monoidal, and thus the functor $$\PMon{m}{p}(\mathrm{CMon} (\Spc)) \to \PMon{m}{p}(\Sp)$$ induced by post-composition is lax symmetric monoidal for the Day convolution structure \cite[Example 2.2.6.17]{HA}.  
\end{enumerate}
\end{proof}

\subsubsection{The structure on $\GW$ of a discrete ring}

Let $R$ be a discrete commutative ring.  Then the category $\Mod_R(\mathrm{Ab})$ is semiadditive, and therefore by Definitions \ref{defn:QF} and \ref{defn:GW}, we obtain objects
\begin{align*}
    \QF(R) &:= \QF(\Mod_R(\Ab)) \in \CMon{0}{p}(\Spc)\\
    \GW(R) &:= \GW(\Mod_R(\Ab))\in \Sp.
\end{align*}

Moreover, by \Cref{prop:QFring} and \Cref{cor:GWring}, since $\Mod_R(\mathrm{Ab})$ is semiadditively \emph{symmetric monoidal}, the objects $\QF(R)$ and $\GW(R)$ naturally acquire the structure of $\mathbb{E}_{\infty}$-algebras in their respective categories.   In fact, one can explicitly describe the resulting ring structures on  $\pi_0\QF(R)$ and $\GW_0(R)$ acquire natural ring structures: if $(V, q),(V', q')\in \pi_0\QF(R)$ are symmetric bilinear forms, then their sum is given by $(V\oplus V', q\oplus q')$ and their product by $(V\otimes V', q\otimes q')$.  

The payout of our work in \Cref{sec:higher_QG} is the following extension of this situation:


\begin{example}\label{exm:qfRcmon1}
Let $R$ be a discrete commutative ring in which $2$ is invertible.  Then the category $\Mod_R(\mathrm{Ab})$ is $2$-typically $1$-semiadditive\footnote{This follows, for instance, by \cite[Proposition 3.2.2]{AmbiHeight}, noting that this category has height $0$ because $2$ is invertible.}.  Therefore, by Definitions \ref{defn:QF} and \ref{defn:GW}, $\QF(R)$ and $\GW(R)$ extend to functors
\begin{align*}
    \QF^{(-)}(R) &: \Span(\Sfin{1}{2})^{\op} \to \Spc\\
    \GW^{(-)}(R) &: \Span(\Sfin{1}{2})^{\op} \to \Sp
\end{align*}
where $\QF^{(-)}(R)$ additionally satisfies the Segal condition.  Moreover, since $\Mod_R(\Ab)$ is in fact semiadditively symmetric monoidal, we have $\QF^{(-)}(R) \in \calg(\CMon{1}{2}(\Spc))$ and $\GW^{(-)}(R) \in \calg(\PMon{1}{2}(\Sp))$ by \Cref{prop:QFring} and \Cref{cor:GWring}.  
\end{example}

\section{$K(1)$-local Grothendieck-Witt theory of finite fields}\label{sect:gwfield}
Let $\Sph_{K(1)}$ denote the $K(1)$-local sphere at the prime $2$.  Recall that, in order to prove \Cref{thm:main}, we aim to compute the element $|BC_2| \in \pi_0(\Sph_{K(1)})$, which is defined in terms of the $1$-commutative monoid structure on $\Sph_{K(1)}$, that is, a certain functor
\[
\Sph_{K(1)}^{(-)}: \Span(\Spc_1^{(2)})^\op \to \Sp.
\]  
The goal of this section is to show that this functor can be understood in terms of the functor $\GW^{(-)}$ constructed in \Cref{sec:higher_QG}.  In particular, we will show: 

\begin{thm}\label{sk1main}
Let $\ell \equiv 3,5\pmod{8}$ be a prime, and let 
\[
\GW^{(-)}(\FF_\ell) :  \Span(\Spc_1^{(2)})^\op \to \Sp
\]
denote the functor constructed in \Cref{exm:qfRcmon1}.  Then there is a natural transformation of functors $\GW^{(-)}(\FF_{\ell})^{\wedge}_2 \to \Sph_{K(1)}^{(-)}$ which, pointwise, exhibits the source as the connective cover of the target.  
\end{thm}

The proof of \Cref{sk1main}, which takes up the bulk of this section, will be outlined and given in \Cref{sub:sk1mainpf}.  

\begin{rem}
A special case of our theorem, given by evaluating at a point, is an identification 
\begin{equation}\label{eqn:gwk1sph}
    L_{K(1)} \GW (\FF_{\ell}) \simeq \Sph_{K(1)}
\end{equation} for primes $\ell \equiv 3,5\pmod{8}$.  This equivalence is essentially due to Friedlander \cite{Friedlander}.  As we will see, the proof of our theorem draws heavily on later work of Fiedorowicz-Hauschild-May \cite{FHM}, which generalizes (\ref{eqn:gwk1sph}) to the equivariant setting. 
\end{rem}


\subsection{Relating $\GW(\FF_\ell)$ to $\Sph_{K(1)}$}\label{sub:sk1mainpf}

Our proof of \Cref{sk1main} proceeds in roughly two steps.  First, we start by working with the Grothendieck-Witt theory of $\cl{\FF}_\ell$, rather than $\FF_\ell$; in \Cref{subsub:ko}, we use work of Fiedorowicz-Hauschild-May \cite{FHM} to show that $L_{K(1)}\GW^{(-)}(\cl{\FF}_\ell)$ is a $1$-commutative monoid (in fact, we will see that it is the unique $1$-commutative monoid structure on $KO^{\wedge}_2$).  Then, in \Cref{subsub:frob}, we pass to Frobenius fixed points and finish the proof of the theorem.

\subsubsection{Relating $\GW(\cl{\FF}_{\ell})$ to $KO$}\label{subsub:ko}

Our goal in \Cref{subsub:ko} will be to show:

\begin{prop}\label{prop:kosegal}
For any prime $\ell$, the functor \[L_{K(1)}\GW^{(-)}(\cl{\FF}_\ell) \colon \Span(\Spc_1^{(2)})^\op \to \Sp \] satisfies the Segal condition (cf. \Cref{defn:cmon}). In other words, 
$L_{K(1)}\GW^{(-)}(\cl{\FF}_\ell)$ defines a $2$-typical $1$-commutative monoid in $K(1)$-local spectra.\end{prop}

This proposition follows essentially immediately from results of Fiedorowicz-Hauschild-May \cite{FHM} and the Atiyah-Segal completion theorem, which we now state.

\begin{prop}[Fiedorowicz-Hauschild-May {\cite[Theorem 0.3]{FHM}}]\label{prop:rescompat}
Let $\ell$ be an odd prime, let $G$ be a finite $2$-group, and let $KO^G$ denote the (genuine) $G$-fixed points of the equivariant real $K$-theory spectrum. Then there is an equivalence\footnote{Note that while $2$-completion and connective cover do not commute in general, they do when the spectra in question are of finite type.  Since we will apply these functors in the finite type setting, we use a notation which does not distinguish the order of applying them.}
\[
\beta^G: \GW^{G}(\cl{\FF}_\ell)^{\wedge}_2 \to \tau_{\geq 0}(KO^G)^{\wedge}_2.
\]
Moreover, these equivalences are compatible with restriction in $G$ in the sense that there is a commutative square
\[\begin{tikzcd}
\GW^G(\cl{\FF}_\ell)^{\wedge}_2 \arrow[r,"\rho_{BG}"] \arrow[d,"\beta^G"] &  (\GW(\cl{\FF}_\ell)^{\wedge}_2)^{hG} \arrow[d,"\beta^{hG}"]\\
\tau_{\geq 0}(KO^G)^{\wedge}_2 \arrow[r] & (\tau_{\geq 0} KO^{\wedge}_2)^{hG}\\
\end{tikzcd} \]
where $\rho_{BG}$ denotes the Segal map (\Cref{defn:cmon}) and the bottom arrow is the canonical map induced by the genuine $G$-equivariant structure.  
\end{prop}

\begin{prop}[Atiyah-Segal Completion Theorem, $2$-complete form \cite{AtiyahSegal}]\label{atiyah-segal}
Let $G$ be a $2$-group.  Then the natural map of spectra $KO^G \to KO^{BG}$ is an equivalence after $2$-completion.  
\end{prop}
\begin{proof}
Since the spectra in question are of finite type, the homotopy groups of the $2$-adic completions are the $2$-completion of the homotopy groups, it suffices to show this is a $2$-complete equivalence on homotopy groups.  By the usual Atiyah-Segal completion theorem \cite{AtiyahSegal}, this map exhibits $\pi_*(KO^{BG})$ as the completion of the ring $\pi_*(KO^G)$ at the augmentation ideal.  Since $G$ is a $2$-group, the augmentation ideal defines the $2$-adic topology (see \cite[Proposition III.1.1]{AtiyahTall}).  \end{proof}

Given these, the proof of \Cref{prop:kosegal} is just unwinding definitions.

\begin{proof}[Proof of \Cref{prop:kosegal}]
Since the functor takes disjoint unions of spaces to products of spectra, it suffices to check the Segal condition on connected spaces in $\Sfin{1}{2}$, i.e., spaces of the form $BG$ for a finite $2$-group $G$.  In this case, we would like to show that the map
\[
\rho_{BG}: L_{K(1)}\GW^{G}(\cl{\FF}_\ell) \to L_{K(1)}\GW(\cl{\FF}_{\ell})^{BG}
\]
induced by the point embeddings $\pt \to BG$ is an equivalence.  By \Cref{prop:rescompat}, this amounts to showing that the natural map
\[
 \tau_{\geq 0}(KO^G)^{\wedge}_2 \to (\tau_{\geq 0}KO^{\wedge}_2)^{BG}
\]
is an equivalence after $K(1)$-localization for every $2$-group $G$.  But $(KO^G)^{\wedge}_2$ is $K(1)$-local and $K(1)$-localization is insensitive to connective cover, so the left-hand side localizes to $(KO^G)^{\wedge}_2$.  For the right-hand side, we have
\[
L_{K(1)} (\tau_{\geq 0}KO^{\wedge}_2)^{BG} \simeq  L_{K(1)}(KO^{\wedge}_2)^{BG} \simeq  (KO^{\wedge}_2)^{BG}\simeq (KO^{BG})^{\wedge}_2 ,
\]
where the first equivalence is because $K(1)$-localization only depends on the connective cover, the second is because $K(1)$-local spectra are closed under limits in $\Sp$, and the third is because the formula for $2$-completion $X^{\wedge}_2 = \lim_i X/2^i$ implies that it commutes with limits.  Thus, the conclusion follows from \Cref{atiyah-segal}.  
\end{proof}

\begin{rem}\label{rem:ko1comm}
Note that, by \Cref{prop:rescompat}, the $2$-typical $1$-commutative monoid $\GW^{(-)}(\cl{\FF}_\ell)$ of \Cref{prop:kosegal} has underlying object $KO^{\wedge}_2$.  Although we will not need this explicitly, we remark that, by the uniqueness of $1$-commutative monoid structures on $K(1)$-local spectra (\Cref{prop:highercommsemiadd}(3)), this means that  $\GW^{(-)}(\cl{\FF}_\ell)$ and  $(KO^{\wedge}_2)^{(-)}$ (cf. \Cref{defn:canhighercm}) are equivalent as $2$-typical $1$-commutative monoids.  
\end{rem}

\subsubsection{Fixed points of Frobenius}\label{subsub:frob}


Note that the Frobenius automorphism induces an action of the group $\mathbb{Z}$ on $\cl{\FF}_\ell$, and thus on the functor $\GW^{(-)}(\cl{\FF}_\ell)$.  We denote the corresponding natural transformation by
\[
\varphi \colon \GW^{(-)}(\cl{\FF}_\ell) \to \GW^{(-)}(\cl{\FF}_\ell).
\]
 On the other hand, the inclusion $\FF_\ell \to \cl{\FF}_{\ell}$ is $\mathbb{Z}$-equivariant (with respect to the trivial action on the source), and thus we have an induced natural transformation
 \[
 u: \GW^{(-)}(\FF_\ell) \to \GW^{(-)}(\cl{\FF}_\ell)^{h\ZZ} \simeq \mathrm{fib}(1-\varphi).
 \]
The work of Fiedorowicz-Hauschild-May \cite{FHM} shows that $u$ is close to an equivalence:

\begin{prop}\label{prop:frobcompat}
Let $\beta^G: \GW^{G}(\cl{\FF}_\ell)^{\wedge}_2 \to \tau_{\geq 0}(KO^G)^{\wedge}_2$ denote the equivalence of \Cref{prop:rescompat}.  Then we have:
\begin{enumerate}
    \item Under the equivalence $\beta^G$, the Frobenius automorphism $\varphi \colon \GW^{G}(\cl{\FF}_\ell) \to \GW^{G}(\cl{\FF}_\ell)$ is identified with the map $\psi^\ell: \tau_{\geq 0}(KO^G)^{\wedge}_2\to\tau_{\geq 0}(KO^G)^{\wedge}_2$ induced by the $\ell$-th Adams operation.  
    \item The $2$-completion of the map $u$ exhibits the functor $\GW^{(-)}(\FF_\ell)^{\wedge}_2$ as  the (pointwise) connective cover of $(\GW^{(-)}(\cl{\FF}_\ell)^{\wedge}_2)^{h\ZZ}$.
\end{enumerate}
\end{prop}
\begin{proof}
Part (1), the identification of the Frobenius with the Adams operation, is demonstrated in \cite[Section 8]{FHM}, in the proof of \cite[Theorem 0.5]{FHM}.    Part (2) is \cite[Theorem 8.1]{FHM} (note that the connective cover comes from the fact that their theorem is stated in spaces).
\end{proof}

We have the following immediate consequence:

\begin{lem}\label{lem:conncov}
The $K(1)$-localization map $\GW^{(-)}(\FF_\ell)^{\wedge}_2 \to L_{K(1)} \GW^{(-)}(\FF_\ell)$ pointwise exhibits the source as the connective cover of the target.
\end{lem}
\begin{proof}
We have
\begin{align*}
\GW^{(-)}(\FF_\ell)^{\wedge}_2 &\simeq \tau_{\geq 0}(\GW^{(-)}(\cl{\FF}_\ell)^{\wedge}_2)^{h\ZZ}\\
&\simeq \tau_{\geq 0}(\tau_{\geq 0}(KO^G)^{\wedge}_2)^{h\ZZ}\\
&\simeq \tau_{\geq 0}((KO^G)^{\wedge}_2)^{h\ZZ}\\
&\simeq  \tau_{\geq 0}L_{K(1)}\tau_{\geq 0}(\tau_{\geq 0} (KO^G)^{\wedge}_2)^{h\ZZ}\\
&\simeq \tau_{\geq 0} L_{K(1)}\GW^{G}(\FF_\ell),
\end{align*}
where the first two equivalences use \Cref{prop:frobcompat}, the third is by connectivity considerations, the fourth is because $L_{K(1)}$ is zero on coconnective spectra, and the last equivalence is by the first two equivalences.  
\end{proof}

We may now finish the proof of \Cref{sk1main}, with the critical input being  \Cref{prop:highercommsemiadd}(3), which asserts that $K(1)$-local spectra admit an essentially unique $1$-commutative monoid structure.  

\begin{proof}
By \Cref{prop:frobcompat}(2), the map of functors
\[
u:  \GW^{(-)}(\FF_\ell) \to \GW^{(-)}(\cl{\FF}_\ell)^{h\ZZ}
\]
is an equivalence after applying $L_{K(1)}$.  Thus, since $L_{K(1)} \GW^{(-)}(\cl{\FF}_\ell)$ satisfies the Segal condition, so does $ L_{K(1)}\GW^{(-)}(\FF_\ell)$ (note that $(-)^{h\ZZ}$ is a finite limit so it can be applied before or after $K(1)$-localization with the same effect).  Finally, applying \Cref{prop:frobcompat}(1), we see that the underlying space of the resulting $2$-typical $1$-commutative monoid  $ L_{K(1)}\GW^{(-)}(\FF_\ell)$ is the fiber of the map
\[
1-\psi^l : KO^{\wedge}_2 \to KO^{\wedge}_2.  
\]
When $\ell\equiv 3,5\pmod{8},$ $\ell$ is a topological generator for $\ZZ_2^{\times}/\{\pm 1\}\simeq \ZZ_2$ and therefore this fiber is equivalent to $\Sph_{K(1)}$ (see, for instance, \cite{HopkinsK1}).  By \Cref{prop:highercommsemiadd}(3), we conclude that there is an equivalence of $2$-typical $1$-commutative monoids
\[
L_{K(1)}\GW^{(-)}(\FF_{\ell}) \simeq \Sph_{K(1)}^{(-)}.
\]
The theorem then follows by \Cref{lem:conncov}.  
\end{proof}

\section{Computations in the $K(1)$-local sphere}\label{sect:computations}
Let $p=2$ and let $\ell$ be a prime congruent to $3$ or $5$ modulo $8$. 
The fact that the $2$-typical $1$-commutative monoid $\Sph_{K(1)}^{(-)}$ is the $K(1)$-localization of the $1$-commutative pre-monoid $\GW^{(-)}(\FF_\ell)$ allows us to deduce facts about the $K(1)$-local sphere from more concrete computations in symmetric bilinear forms over the finite field $\FF_\ell$.  

We start by briefly recalling some aspects of the classical theory of $\GW_0(\FF_{\ell})$ and explicitly relating $\GW_0(\FF_{\ell})$ to $\pi_0\Sph_{K(1)}$ in \Cref{sub:pi0map}.  Our first application of this relationship is to compute the cardinalities of $2$-typical $\pi$-finite spaces in $\pi_0\Sph_{K(1)}$ (\Cref{thm:main}) in \Cref{sub:cardinalities}.

We then turn to computing various natural power operations on $\pi_0\Sph_{K(1)}$.  Because of the multiplicative nature of the $K(1)$-local equivalence $L_{K(1)}\GW^{(-)}(\FF_\ell) \simeq \Sph_{K(1)}^{(-)}$, these power operations can be expressed in terms of multiplicative operations on symmetric bilinear forms over $\FF_\ell$.  The first of these operations is the operation $\alpha_p$ (studied in \cite{TeleAmbi}), which we compute in \Cref{sub:alpha}.  In \Cref{sub:thetadelta}, we use $\alpha_p$ to compute two closely related operations: the operation $\theta$ (originally due to McClure, cf. \cite{Hinfty,HopkinsK1}), and the canonical $p$-derivation $\delta_p$, which was used in the proof of the higher semiadditivity of $\Sp_{T(n)}$ in \cite{TeleAmbi}.  Finally, we compute Rezk's logarithm on $\pi_0\Sph_{K(1)}^{\times}$ in \Cref{subsub:log}.

\subsection{Relating $\GW_0(\FF_\ell)$ to $\pi_0\Sph_{K(1)}$}\label{sub:pi0map}
The purpose of this section will be to compile some of the explicit consequences of our work to this point.  We will review the theory of symmetric bilinear forms over a finite field in \Cref{subsub:qf-fin-field}.  Then, in \Cref{subsub:gwfield}, we specialize the results of \Cref{sec:higher_QG} to the case of $\GW$ of a discrete ring $R$ in which $2$ is invertible and describe the resulting transfer maps coming from the $1$-commutative monoid structure on $\QF(R)$.  Finally, in \Cref{subsub:map-pi0}, we explicitly understand the map $\GW_0(\FF_\ell) \to \pi_0 \Sph_{K(1)}$ induced by the identification of \Cref{sk1main}, which will allow us to deduce how the higher semiadditive transfers act on $\pi_0\Sph_{K(1)}$.  

\subsubsection{Symmetric bilinear forms over a discrete ring}\label{subsub:qf-fin-field}


\begin{notation}\label{ntn:canform}
Let $R$ be a commutative ring.  Then every invertible element $r\in R^\times$ determines a nondegenerate symmetric bilinear form 
\[
b_r(x\otimes y) = rxy,
\]
and we denote the class of $(R,b_r)$ in $\pi_0\QF(R)$ or $\GW_0(R)$ by $[r]$.  Note that the construction $r\mapsto [r]$ determines a group homomorphism $R^{\times} \to \GW_0(R)$.  
\end{notation}

The monoid $\pi_0\QF(R)$ can be explicitly described when $R$ is a finite field in which $2\neq 0$.  

\begin{example}\label{exm:finitefield}
Let $k$ be a finite field of characteristic not equal to $2$.  Then any nondegenerate symmetric bilinear form $(V, q)$ can be diagonalized, and therefore splits as a sum
\[
(V,q) = \sum_i [x_i]
\]
for elements $x_i \in k^{\times}/(k^{\times})^2$.  Since $k$ is a finite field, this latter group is isomorphic to $\mathbb{Z}/2$, generated by a nonsquare class $r$.  Moreover, the only relation is $2[r] = 2$, so the monoid $\QF_0(k)$  of symmetric bilinear forms over $k$ is the free commutative monoid on $[1]$ and $[r]$ subject to the relation $2[r] = 2[1]$.  Moreover, the multiplicative structure satisfies $[r]^2 = [1]$.

This monoid comes with two homomorphisms
\begin{align*}
    \mathrm{rank}: \QF_0(k) &\to \mathbb{N} && \mathrm{det}:\QF_0(k)\to k^{\times}/(k^{\times})^2\\
    a[1]+b[r] &\mapsto a+b  &&  \quad   \ a[1]+b[r] \mapsto r^b
\end{align*}
whose product is an injective map $\QF_0(k) \xrightarrow{(\mathrm{rank},\mathrm{det})} \mathbb{N} \times \mathbb{Z}/2$.  That is, any symmetric bilinear form can be recovered from its rank and determinant.  
\end{example}

From this example, one can read off the $0$-th Grothendieck-Witt group (see, e.g., \cite[Theorem 3.5]{lam2005introduction}):

\begin{prop}
\label{GW_finite_field}
Let $k$ be a finite field of odd characteristic.  Then
\[
\pi_0\GW(k)\simeq \ZZ[e]/(e^2,2e),
\] 
where $e = [r] - [1]$ for $r\in k^\times$ a non-square. 
\end{prop}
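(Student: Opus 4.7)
The plan is to upgrade the monoid-theoretic description of $\QF(k)$ given in Example~\ref{exm:finitefield} to a ring computation for $\pi_0 \GW(k)$. Since $\GW(k)$ is defined as the group-completion of $\QF(k)$ and $\pi_0$ commutes with group completion of commutative monoids in spaces, my first step is to identify $\pi_0 \GW(k)$ as the usual Grothendieck group of the abelian monoid $\pi_0\QF(k)$, and my second step is to read off the ring structure from the tensor product of symmetric bilinear forms.

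By Example~\ref{exm:finitefield}, $\pi_0\QF(k)$ is freely generated as a commutative monoid by $[1]$ and $[r]$ subject to the single relation $2[r]=2[1]$. Its Grothendieck group is therefore
\[
\mathbb{Z}\langle [1], [r]\rangle / (2[r]-2[1]) \;\cong\; \mathbb{Z} \oplus (\mathbb{Z}/2)\cdot e,
\]
where $e := [r]-[1]$ and the free $\mathbb{Z}$-summand is generated by the unit $[1]$. This gives the underlying abelian group of $\pi_0 \GW(k)$, together with the relation $2e=0$.

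For the ring structure, I would use the fact that $\GW(k)$ is a commutative ring spectrum with multiplication induced by tensor product of symmetric bilinear forms, so on diagonal forms $[a]\cdot[b]=[ab]$ for $a,b\in k^{\times}$. In particular $[r]^2=[r^2]$, and since $r^2 \in (k^{\times})^2$ the rescaling $x\mapsto x/r$ exhibits an isomorphism $[r^2]\cong [1]$. Substituting $[r]=1+e$ in the identity $[r]^2=1$ gives $(1+e)^2 = 1$, that is $2e+e^2 = 0$, whence $e^2 = -2e = 0$ using $2e = 0$. Together with the additive computation above, this yields the desired ring isomorphism $\pi_0\GW(k) \cong \mathbb{Z}[e]/(e^2,2e)$.

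There is essentially no obstacle once Example~\ref{exm:finitefield} is in hand; the only nontrivial classical input used is the diagonalization and the relation $\langle a,b\rangle \cong \langle 1, ab\rangle$ over a field of odd characteristic in which every element is a sum of two squares (which a finite field satisfies), and this is already absorbed into the statement of the example. The remaining steps are just taking a group completion and matching the multiplicative generator with the additive one.
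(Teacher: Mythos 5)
Your proof is correct and amounts to the same computation the paper has in mind; the paper simply cites Lam's textbook for the result after remarking that it "can be read off" from Example~\ref{exm:finitefield}, whereas you have written out the reading-off explicitly (group-completing the monoid to get the additive structure $\ZZ\oplus(\ZZ/2)e$, then using $[r]^2=[r^2]=[1]$ to extract $e^2=0$).
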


\subsubsection{The higher semiadditive integration maps on $\GW_0$}\label{subsub:gwfield}

Let $R$ be a commutative ring in which $2$ is invertible.  Then we have seen in \Cref{exm:qfRcmon1} that the $2$-typical $1$-semiadditive structure on $\Mod_R(\Ab)$ gives us functors
\begin{align*}
    \QF^{(-)}(R) &: \Span(\Sfin{1}{2})^{\op} \to \Spc\\
    \GW^{(-)}(R) &: \Span(\Sfin{1}{2})^{\op} \to \Sp.
\end{align*}
The following proposition describes explicitly the integration maps for $\QF^{(-)}(R)$ (and thus for $\GW^{(-)}(R)$). 



\begin{prop}
\label{formula_push_quad}
Let $G$ be a finite $2$-group, let $R$ be a commutative ring in which $2$ is invertible, and let $(V,b)\in \QF^{BG}(R)$ be a $G$-equivariant $R$-module equipped with an equivariant symmetric bilinear form.  

Then the map
\[
\push{BG} \colon \QF^{BG}(R) \to \QF(R)
\] 
sends the pair $(V,b)$ to the pair $(V_G, \push{BG}b)$ where the symmetric bilinear form $\push{BG}b$ is given by the formula

\[
\push{BG} b(\overline{u},\overline{v}) = 
\sum_{g\in G}b(gv, u).
\]
Here, $V_G$ denotes the $G$-coinvariance of $V$ and $\overline{u},\overline{v}\in V_{G}$ are the images of $u,v\in V$ under the quotient map.
\end{prop}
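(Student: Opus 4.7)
\medskip

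My plan is to chase through the definitions, using \Cref{push_quad_correct} to reduce to the explicit construction of \Cref{def_push_quadratic_form}, and then unpack the norm equivalence in the concrete case $f\colon BG \to \pt$.

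First, by \Cref{push_quad_correct}, the map $\push{BG}$ on $\QF^{BG}(k)$ agrees with the construction of \Cref{def_push_quadratic_form}. Under the evident identifications $f_!V \simeq V_G$, $f_*V \simeq V^G$, and $\Du(V) \simeq V^\vee$ in $\Mod_k(\Ab)^{BG}$, the output $(V_G,\push{BG}b)$ has its bilinear form $\push{BG}b$ defined as the mate of the composite
\[
V_G \xrightarrow{\,(b^\vee)_G\,} (V^\vee)_G \xrightarrow{\,\Nm_f\,} (V^\vee)^G \xrightarrow{\,\sim\,} (V_G)^\vee,
\]
where $b^\vee\colon V \to V^\vee$ is the $G$-equivariant map $v\mapsto b(v,-)$.

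Next I would identify each map concretely. The first map sends $\overline{v}\mapsto \overline{b^\vee(v)}$. The final isomorphism $(V^\vee)^G \simeq (V_G)^\vee$ sends a $G$-invariant functional $\varphi$ to $\overline{u}\mapsto \varphi(u)$, which is well defined by $G$-invariance. The crucial step is to identify the norm map $\Nm_f \colon (V^\vee)_G \to (V^\vee)^G$, which (since $|G|$ is prime to the characteristic, so that higher semiadditivity is available for $BG$) is the classical averaging map $\overline{\varphi}\mapsto \sum_{g\in G} g\cdot \varphi$, with the standard left action $(g\cdot \varphi)(w)=\varphi(g^{-1}w)$. Composing the three maps, the composite $V_G \to (V_G)^\vee$ sends $\overline{v}$ to the functional $\overline{u}\mapsto \sum_{g\in G} b(v,g^{-1}u)$. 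Applying $G$-equivariance of $b$, namely $b(v,g^{-1}u)=b(gv,u)$, yields
\[
\push{BG}b(\overline{u},\overline{v}) \;=\; \sum_{g\in G} b(gv,u),
\]
as claimed. Symmetry of this form follows from symmetry of $b$ together with reindexing $g\mapsto g^{-1}$.

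The main obstacle I anticipate is simply the bookkeeping of identifying the norm map $\Nm_f\colon V_G \to V^G$ for $f\colon BG\to \pt$ as the classical averaging map $\overline v\mapsto \sum_{g\in G}gv$; this is folklore but worth stating carefully, and it is where the hypothesis that $|G|$ is a power of $2$ (and the characteristic is prime to $2$) enters, since it is exactly the condition that ensures $BG\in \Sfin{1}{2}$ and that the norm is an equivalence in $\Mod_k(\Ab)$. Once this identification is in place, everything else is a direct computation with duals, coinvariants and the defining equivariance of $b$.
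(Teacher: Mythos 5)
Your proof is correct and matches the paper's argument: both reduce via \Cref{push_quad_correct} to the composite of \Cref{def_push_quadratic_form}, identify the norm $\Nm_{BG}\colon V_G \to V^G$ as the averaging map $\overline v \mapsto \sum_{g} gv$, and unwind the mate to get the stated formula (the paper uses naturality of the norm to commute $b^\vee$ past it before unpacking, which is an equivalent bookkeeping shortcut). The only cosmetic difference is that both your derivation and the paper's land on a form related to the one in the statement by the symmetry of $b$ and the substitution $g \mapsto g^{-1}$, which you correctly observe.
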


\begin{proof}
The underlying module is $V_G$ because $\push{BG}$ is, by construction, given by left Kan extension along $BG\to \pt$ on the underlying object.  To determine the bilinear form, let $b^\vee\colon V\to \Du{V}$ be the mate of $b$. Then, by \Cref{push_quad_correct}, the map $(\smallint_{BG}b)^\vee$ is given by
\[(\smallint_{BG}b)^\vee(\overline{u}) = b^\vee(\Nm_{BG}(\overline{u})) = b^\vee(\sum_{g\in G} gu)\]
so that
\[
(\smallint_{BG}b)(\overline{u},\overline{v}) = 
((\smallint_{BG}b)^\vee(\overline{u}))(\overline{v}) = \big( b^\vee(\sum_{g\in G} gu) \big) (v) = \sum_{g\in G}b(gu,v). 
\]
\end{proof}


\begin{cor}
\label{int_const_quad}
For any finite $2$-group $G$, we have
\[
\push{BG}[r] = [|G|\cdot r] \qin \QF(R).
\]
\end{cor}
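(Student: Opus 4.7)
The plan is to prove this as a direct computational consequence of \Cref{formula_push_quad}, applied to the specific data of the $G$-equivariant symmetric bilinear form $[r] = (k, b_r)$ with trivial $G$-action on $k$. There is no deeper structural ingredient needed beyond unpacking the formula; the $2$-group hypothesis is inherited only so that the result lives in the $2$-typical $1$-commutative pre-monoid $\GW^{(-)}(k)$ under discussion.

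First I would identify the underlying object of $\push{BG}[r]$. Since $V = k$ carries the trivial $G$-action, the coinvariants $V_G$ (formed in $\Mod_k$) are again just $k$, so the underlying object of $\push{BG}[r]$ is $k$, matching the underlying object of $[|G|\cdot r]$.

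Next I would compute the symmetric bilinear form on the nose using the explicit formula from \Cref{formula_push_quad}. For any $u, v \in k$, triviality of the action gives $gv = v$ for all $g \in G$, and since $b_r(gv, u) = b_r(v,u) = r v u = r u v$ does not depend on $g$, the sum collapses to
\[
\push{BG} b_r(\overline{u}, \overline{v}) = \sum_{g\in G} b_r(gv, u) = |G| \cdot r u v = b_{|G|\cdot r}(u,v).
\]
Therefore $\push{BG}(k, b_r) = (k, b_{|G|\cdot r}) = [|G|\cdot r]$ in $\QF(k)$, as claimed.

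There is no real obstacle here: the corollary is essentially a sanity check that the abstract pushforward of \Cref{def_push_quadratic_form}, when unwound via \Cref{push_quad_correct} to the concrete formula of \Cref{formula_push_quad}, behaves on trivially acted constant forms exactly the way one would naively expect from thinking of $\push{BG}$ as ``$|G|$-fold sum''. The only point requiring any care is that the formula in \Cref{formula_push_quad} mixes the two arguments through the action, but since the action is trivial this mixing is invisible.
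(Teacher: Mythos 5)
Your proof is correct and takes essentially the same approach as the paper: apply \Cref{formula_push_quad} to $(k, b_r)$ with trivial $G$-action, note the sum collapses to $|G|$ identical terms, and identify the resulting form as $b_{|G|\cdot r}$. The only addition is your explicit remark that the coinvariants $k_G \simeq k$, which the paper leaves implicit.
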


\begin{proof}
By \Cref{formula_push_quad} and because $G$ acts trivially on $R$, we have 
\[
(\push{BG}b_r)(x,y) = \sum_{g\in G} b_r(gx,y) = \sum_{g\in G}b_r(x,y) =|G|\cdot b_r(x,y) = b_{|G| r}(x,y), 
\]
which is exactly the symmetric bilinear form on $R$ corresponding to the object $[|G|\cdot r]\in \QF(R)$.
\end{proof}

\subsubsection{Relating $\GW_0(\FF_\ell)$ and $\pi_0\Sph_{K(1)}$}\label{subsub:map-pi0}

\Cref{sk1main} roughly asserts that the $1$-semiadditive structure of $\Sph_{K(1)}$ can be understood in terms of the higher semiadditive Grothendieck-Witt theory of a finite field via a certain natural transformation $\GW^{(-)}(\FF_\ell) \to \Sph_{K(1)}^{(-)}$.  In order to better translate between the two objects, we will compute $\GW(\FF_\ell) \to \Sph_{K(1)}$ explicitly on $\pi_0$.

First, recall from \Cref{subsubsec:sk1} the equivalence
\[
\pi_0 \Sph_{K(1)}\simeq \ZZ_2[\varepsilon] / (\varepsilon^2, 2\varepsilon),
\]
where $\varepsilon :=\eta \cdot \zeta$.  On the other hand, by \Cref{GW_finite_field}, one has an isomorphism
\[
\pi_0\GW(\FF_\ell) \simeq \ZZ[e]/(e^2,2e)
\]
where $e$ represents the class $[r] - [1]$ for a nonsquare $r\in \FF_\ell^{\times}.$  

\begin{lem}\label{lem:pizero}
Let $\ell\equiv 3,5\pmod{8}$ be prime.  Then the ring map
\[
\ZZ[e]/(e^2,2e) \simeq \pi_0\GW(\FF_\ell) \to \pi_0\Sph_{K(1)} \simeq \ZZ_2[\varepsilon]/(\varepsilon^2,2\varepsilon)
\]
of \Cref{sk1main} sends $e$ to $\varepsilon$.  
\end{lem}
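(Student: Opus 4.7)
The plan is to read off the map on $\pi_0$ directly from the connective cover statement established in \Cref{prop:sk1mainprecise}, with essentially no further computation. Parts (1) and (3) of that proposition together assert that evaluating the natural transformation of \Cref{sk1main} at $\pt \in \Span(\Sfin{1}{2})^{\op}$ yields a map $\GW(\FF_\ell)^{\wedge}_2 \to \Sph_{K(1)}$ that exhibits the source as the connective cover of the target. In particular, the induced map $\pi_0 \GW(\FF_\ell)^{\wedge}_2 \to \pi_0 \Sph_{K(1)}$ is an isomorphism of abelian groups.

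First I will record the $2$-torsion subgroups on both sides. On the source, $\pi_0 \GW(\FF_\ell)^{\wedge}_2$ identifies with $\ZZ_2[e]/(e^2, 2e)$, since the abelian group $\ZZ[e]/(e^2, 2e) \simeq \ZZ \oplus (\ZZ/2)e$ has $2$-completion $\ZZ_2 \oplus (\ZZ/2)e$. Hence its $2$-torsion subgroup is exactly $\{0, e\}$. Similarly, the $2$-torsion subgroup of $\pi_0 \Sph_{K(1)} \simeq \ZZ_2[\varepsilon]/(\varepsilon^2, 2\varepsilon)$ is $\{0, \varepsilon\}$.

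Next, since any group homomorphism sends $2$-torsion to $2$-torsion and the map in question is an isomorphism, it restricts to an isomorphism on these two-element $2$-torsion subgroups. Consequently, the image of $e$ must be the unique nonzero element of $\{0, \varepsilon\}$, namely $\varepsilon$, which is the desired conclusion.

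I do not anticipate any genuine obstacle, as the substantive content has already been packaged into \Cref{prop:sk1mainprecise}; the lemma itself is a direct consequence of the connective cover identification combined with the elementary structure of $2$-torsion in both rings. The only mild subtlety is verifying that $2$-completion does not disturb the torsion summand $(\ZZ/2) e$, which is immediate since this summand is already $2$-complete.
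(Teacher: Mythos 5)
Your proof is correct and takes essentially the same approach as the paper: both reduce to the fact that the map exhibits $\pi_0\Sph_{K(1)}$ as the $2$-completion of $\pi_0\GW(\FF_\ell)$ and then pin down the image of $e$ by characterizing it as the unique nonzero "special" element; you use the $2$-torsion subgroup where the paper uses the nilpotent ideal, but in these rings those two sets coincide, so the arguments are interchangeable.
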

\begin{proof}
By \Cref{sk1main}, the map exhibits the target as the $2$-completion of the source so $e$ cannot go to zero. Since $e$ is the only nilpotent element in $\ZZ[e]/(e^2,2e)$ and $\varepsilon$ is the only nilpotent element of $\ZZ_2[\varepsilon]/(\varepsilon^2,2\varepsilon)$, we deduce that this homomorphism must take $e$ to $\varepsilon$.
\end{proof}
 
We then have the following immediate corollary of \Cref{lem:pizero} and \Cref{exm:finitefield}.  

\begin{cor}
Let $l \equiv 3,5\pmod{8}$ be prime and suppose that $(V,q) \in \QF(\ell)$ such that 
\[
\mathrm{rank}(V,q) = r, \quad \mathrm{det}(V,q)=d.
\]
Then the image of $(V,q)$ under the composite
\[
\QF_0(\FF_\ell) \to \GW_0(\FF_{\ell}) \to \pi_0\Sph_{K(1)}
\]
is given by $r+ d\varepsilon.$
\end{cor}


\subsection{$K(1)$-local cardinalities of $2$-typical $\pi$-finite spaces}\label{sub:cardinalities}
Recall that, for a $\pi$-finite space $A$, one can associate an element $|A|\in \pi_0 \Sph_{K(1)}$, known as the ($K(1)$-local) cardinality of $A$ (cf. \Cref{defn:card}). In this section, we explain how to compute these cardinalities for every $2$-typical $\pi$-finite space $A$.   In fact, it suffices to do this in the case $A$ is connected, as if $A = \sqcup_i A_i$, then $|A|=\sum_{i} |A_i|$.  For connected $A$, we shall express the result in terms of its \mdef{\textit{homotopy cardinality}}, defined by the formula
\[
    \mdef{|A|_0} := \prod_{i\in \NN} |\pi_i A|^{(-1)^i}.
\]
We have: 




\begin{thm} \label{card_BC_2}
    Let $\Sph_{K(1)}$ denote the $K(1)$-local sphere at the prime 2, and let $A$ be a connected $2$-typical $\pi$-finite space. Then we have
\begin{equation} \label{eq:card_formula_2_typical}
    |A| = 1+\log_2(|A|_0)\cdot \varepsilon \qin \pi_0\Sph_{K(1)}.
\end{equation}
In particular,
\[
    |BC_2| = 1 + \varepsilon.
\]
\end{thm}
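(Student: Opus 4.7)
The plan is to deduce the formula from the identification in Theorem~\ref{sk1main}, by carrying out the computation in the concrete setting of symmetric bilinear forms over $\FF_\ell$. The core case is $A = BC_2$; the general connected case then follows by a Postnikov tower induction using multiplicativity of cardinality in fibrations.

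\emph{Step 1 (computing $|BC_2|$).} By Proposition~\ref{prop:sk1mainprecise}, the equivalence $L_{K(1)}\GW^{(-)}(\FF_\ell) \simeq \Sph_{K(1)}^{(-)}$ is one of $1$-commutative monoids, so the induced ring map $\pi_0 \GW(\FF_\ell) \to \pi_0 \Sph_{K(1)}$ of Lemma~\ref{lem:pizero} carries $|BC_2|_{\GW(\FF_\ell)}$ to $|BC_2|_{\Sph_{K(1)}}$. By definition, $|BC_2|_{\GW(\FF_\ell)}$ is the image of $[1]$ under $\push{BC_2}\circ \Delta$; the diagonal sends $[1]$ to the unit of $\QF^{BC_2}(\FF_\ell)$, and by Corollary~\ref{int_const_quad} its integration is $[|C_2|\cdot 1] = [2]$. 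Since $\ell \equiv 3,5\pmod 8$, the second supplement to quadratic reciprocity shows that $2$ is a non-square in $\FF_\ell^\times$, so $[2] = [r]$ in $\pi_0\GW(\FF_\ell) \cong \ZZ[e]/(e^2,2e)$; that is, $|BC_2|_{\GW(\FF_\ell)} = 1+e$. Applying Lemma~\ref{lem:pizero} yields $|BC_2|_{\Sph_{K(1)}} = 1+\varepsilon$, which already proves the ``in particular'' clause.

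\emph{Step 2 (multiplicativity in fibrations).} Next, I will use the following fact: for a fiber sequence $F \to E \to B$ of $2$-typical $\pi$-finite spaces with $B$ connected, $|E|_{\Sph_{K(1)}} = |B|_{\Sph_{K(1)}}\cdot |F|_{\Sph_{K(1)}}$. This follows from the Fubini-type compatibility of integration in higher semiadditive $\infty$-categories (cf.\ Proposition~3.1.13 of~\cite{TeleAmbi}), together with the observation that over a connected base the pulled-back cardinality is a constant function on $B$ and hence integrates to $|B|\cdot |F|$.

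\emph{Step 3 (Postnikov induction).} Applying multiplicativity to a central extension $1 \to C_2 \to G \to G/C_2 \to 1$ of a nontrivial finite $2$-group $G$, induction on $|G|$ gives $|BG| = (1+\varepsilon)\cdot |B(G/C_2)| = 1 + \log_2|G|\cdot \varepsilon$. For $K(\pi, n)$ with $\pi$ a finite $2$-group and $n\geq 2$, the path fibration $K(\pi,n-1)\to \ast \to K(\pi,n)$ gives $|K(\pi,n)|\cdot |K(\pi,n-1)| = 1$; but every element of the form $1+c\varepsilon\in \pi_0\Sph_{K(1)}$ satisfies $(1+c\varepsilon)^2 = 1$ (since $\varepsilon^2 = 2\varepsilon = 0$), so $|K(\pi,n)| = |K(\pi,n-1)|$, and iteration reduces to $n=1$. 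For general connected $A$, I climb the Postnikov tower $A_{\leq n}\to A_{\leq n-1}$ with fiber $K(\pi_n A, n)$; multiplicativity combined with the additivity $\log_2|E|_0 = \log_2|B|_0 + \log_2|F|_0$ in fibrations produces the formula $|A| = 1 + \log_2|A|_0\cdot \varepsilon$.

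\emph{Main obstacle.} The most delicate step is Step~2, the multiplicativity of cardinality in connected fibrations, which requires a careful use of the Fubini properties of integration in $1$-semiadditive $\infty$-categories. The concrete computation in Step~1 is then essentially algebraic, resting only on the translation provided by Theorem~\ref{sk1main} and the non-squareness of $2$ modulo $\ell$.
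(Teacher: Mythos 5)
Your Step 1 (the $BC_2$ computation) and the overall reduction strategy in Step 3 match the paper closely: the paper likewise pushes $[1]$ forward along $BC_2 \to \pt$, applies \Cref{int_const_quad} to get $[2]=[1]+e$, invokes \Cref{lem:pizero}, and then reduces the general connected case to $BC_2$ via a filtration by principal fibrations with connected fibers. Step 3's Postnikov-plus-central-extension decomposition is an equivalent way of organizing the paper's filtration by principal fibrations with fibers $B^{\ell_i}C_2$.

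However, the multiplicativity lemma you state in Step 2 is false as written, and the Fubini sketch you give for it does not go through. You claim that for a fiber sequence $F\to E\to B$ with $B$ connected, $|E|=|B|\cdot|F|$. Take the path-loop fibration $C_2\to \pt\to BC_2$: here $B=BC_2$ is connected, $|E|=|\pt|=1$, yet $|B|\cdot|F|=(1+\varepsilon)\cdot 2=2$. The subtlety is that the fiberwise integral $\smallint_p p^*\colon X^B\to X^B$ does restrict, at each point $b\in B$, to $|F_b|=|F|$ (by base change), but this does not force it to equal multiplication by the scalar $|F|$ on all of $X^B$; in the example, $\smallint_p p^*$ is $\mathrm{tr}\circ\mathrm{res}$ on $X^{hC_2}$, which is not $2\cdot\mathrm{Id}$. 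The correct hypothesis, and the one the paper uses (citing \cite[Proposition 2.3.3, Proposition 3.2.3]{AmbiHeight}), is that the fiber $F$ is connected and the fiber sequence is principal. Fortunately, every place you actually apply multiplicativity in Step 3 — the central extension $BC_2\to BG\to B(G/C_2)$, the path fibration over $K(\pi,n)$ with $n\geq 2$, and the Postnikov layers — has a connected fiber and is principal, so your conclusion is right; only the stated lemma and its justification need to be replaced by the correct citation.

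One further small point: your Step 3 tacitly needs a parity check. Multiplying the Eilenberg--MacLane contributions gives a coefficient $\sum_n\log_2|\pi_n A|$ on $\varepsilon$, while $\log_2|A|_0=\sum_n(-1)^n\log_2|\pi_n A|$; these agree only because the coefficient of $\varepsilon$ lives in $\ZZ/2$ and $(-1)^n\equiv 1\pmod 2$. The paper sidesteps this by proving directly that both sides of the identity are multiplicative in principal fiber sequences with connected fiber, which is a slightly cleaner organization.
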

\begin{proof}
    We first reduce the general case to the case $A=BC_2$. 
    Let $A\to B \to C$ be a principal fiber sequence of $2$-typical $\pi$-finite spaces such that $A$ is connected. Then, since $\Sp_{K(1)}$ is of semiadditive height 1, we have by \cite[Theorem A(4)]{AmbiHeight} that 
\[
    |B| = |A|\cdot|C| \qin \pi_0\Sph_{K(1)}.
\]
    On the other hand, since we have $|B|_0 = |A|_0|C|_0$ (either by the long exact sequence on homotopy or by the same reference), and since $\varepsilon^2 = 0$, we deduce that 
\[
    1 + \log_2(|B|_0)\varepsilon = 1 + \log_2(|A|_0)\varepsilon + \log_2(|C|_0)\varepsilon = (1 + \log_2(|A|_0)\varepsilon) \cdot (1 + \log_2(|C|_0)\varepsilon).      
\]
    Hence, both the left hand side and the right hand side in the proposed identity  (\ref{eq:card_formula_2_typical}) are multiplicative in principal fiber sequences with connected fiber.  
    
    Note that every connected $2$-typical $\pi$-finite space $A$ is nilpotent, and hence participates in a sequence of principal fibrations 
\[
    A = A_0 \to A_1 \to ... \to A_k = \pt 
\]
    such that, for every $0 \le i \le k-1$, the fiber of the map $A_i \to A_{i+1}$ is of the form $B^{\ell_i} C_2$ for some $\ell_i > 0$.  Hence, the claim for general $A$ follows from the cases $A=B^\ell C_2$. Moreover, since we have principal fiber sequences 
    \[
    B^{\ell-1}C_2 \to \pt \to B^\ell C_2,
    \]
we may further reduce these cases to the single case $A=BC_2$. 
 
We now prove the claim for $BC_2$.  
By the results of \Cref{sect:gwfield}, we have a (unital) map 
\[
\GW^{(-)}(\FF_\ell) \to \Sph_{K(1)}^{(-)}\]
in $\PMon{1}{2}(\Sp)$.  Thus, by definition of the cardinality, the map 
$\pi_0\GW(\FF_\ell) \to \pi_0\Sph_{K(1)}$ carries the element $|BC_2|\in \pi_0\GW(\FF_\ell)$ to the element $|BC_2|\in \pi_0\Sph_{K(1)}$. Therefore, it is enough to compute $|BC_2|$ in $\pi_0\GW(\FF_\ell)$.  Moreover, since (by \Cref{lem:pizero}) the map $\pi_0\GW(\FF_\ell) \to \pi_0\Sph_{K(1)}$ sends $e$ to $\varepsilon$, it suffices to see that 
\[
|BC_2| = 1 + e \qin \pi_0\GW(\FF_\ell).  
\]

Let $\pi: BC_2 \to \pt$ denote the terminal map.  By definition, $|BC_2| \in \pi_0 \GW(\FF_\ell)$ is given by $\push{\pi}\pi^* 1$, where $1 \in \pi_0 \GW(\FF_\ell)$ is represented by the symmetric bilinear form $[1] \in \QF(\FF_\ell)$.  Under $\pi^*: \GW(\FF_\ell) \to \GW^{BC_2}(\FF_\ell)$, the element $1\in \pi_0 \GW(\FF_\ell)$ is sent to the element  $1\in \pi_0\GW^{BC_2}(\FF_\ell)$ represented by the symmetric bilinear form $[1]\in \QF(\FF_\ell)$, thought of as a $C_2$-equivariant form via the trivial $C_2$-action on $\FF_\ell$. 
Thus, by \Cref{int_const_quad} we have 
\[
|BC_2| = \int\limits_{BC_2} [1] = [|C_2|\cdot 1] = [2] = [1] + ([2] - [1])\qin \pi_0\GW(\FF_\ell).
\]
Finally, since $\ell\equiv 3,5 \pmod{8}$ by assumption, $2$ is not a square in $\FF_\ell^{\times}$, and so $[2]-[1]$ is a representative for $e$ in $\GW_0(\FF_\ell)$ and the result follows.
\end{proof}



\subsection{The operation $\alpha$} \label{sub:alpha}
Let $(\cC, \one_{\cC})$ be a symmetric monoidal $\infty$-category, let $R\in \calg(\cC)$, and let $\Sigma_p$ denote the symmetric group on $p$ letters. Then, consider the natural map of sets
\[
\pi_0(R) := \pi_0 \Map(\one_{\cC}, R) \xrightarrow{(-)^{\otimes p}} \pi_0 \Map(\one_{\cC} , R^{\otimes p})^{h\Sigma_p} \to \pi_0 \Map( \one_{\cC}, R)^{h\Sigma_p} \cong \pi_0 R^{B\Sigma_p},
\]
where the first arrow is induced by taking $p$th tensor power and the second by multiplication.  We refer to this map as the \emph{total $p$-power operation} and denote it by 
\[
\PP_p \colon \pi_0(R) \to \pi_0(R^{B\Sigma_p}).
\]
This map can be thought of as taking $x\in \pi_0(R)$ to the canonical $B\Sigma_p$-family of $p$-th powers of $x$. If $\cC$ is $p$-typically $1$-semiadditive, then one can integrate $\PP_p$ over the classifying space $BC_p$ of the $p$-Sylow subgroup $C_p \subseteq \Sigma_p$, to obtain a self map of $\pi_0(R)$.
Let $i\colon BC_p \into B\Sigma_p$ be the map induced by the inclusion of a $p$-Sylow subgroup of $\Sigma_p$.
\begin{defn}[\cite{TeleAmbi} \S 4.2] \label{defn:alpha}
Let $\cC \in \calg(\Catsa{1}{p})$. For $R\in \calg(\cC)$, we define the map 
$\alpha_p\colon \pi_0(R)\to \pi_0(R)$ by the composite
\[
    \mdef{\alpha_p} \colon \pi_0(R) \oto{\PP_p} \pi_0(R^{\Sigma_p}) \oto{i^*} \pi_0(R^{BC_p})  \oto{\push{BC_p}} \pi_0(R),
\]
\end{defn}
Informally, we have 
\[
\alpha_p(x) = \push{BC_p}x^p.
\]
\begin{rem} \label{rem:functional_equation_alpha}
Morally, $\alpha_p$ is supposed to resemble the function $\frac{x^p}{p}$. For example, one can show \cite[\S 4.2]{TeleAmbi} that $\alpha_p$ satisfies the functional equation
\begin{equation}\label{eqn: func_eq_alpha}
\alpha_p(x+y) - \alpha_p(x)-\alpha_p(y) = \frac{(x+y)^p - x^p-y^p}{p}.
\end{equation}
Here, the right hand side does not actually involve division by $p$, and hence makes sense in every ring. 
\end{rem}

\begin{rem}\label{rem:alpha_cardinality}
The operation $\alpha_p$ has a simple interaction with cardinalities, namely that
\[
\alpha_p(|A|) = |A\wr C_p| = |(A^{\times C_p})_{hC_p}|,
\]
where $C_p$ acts on $A^{\times C_p}$ by permuting the factors \cite[Theorem 4.2.12]{TeleAmbi}. 
\end{rem}

When combined with the functional equation from \Cref{rem:functional_equation_alpha}, this fact gives a computation of $\alpha_p$ on $\ZZ_p = \pi_0\Sph_{K(1)}$.

\begin{thm} \label{alpha_trick}
Let $\Sph_{K(1)}$ denote the $K(1)$-local sphere at the prime $p$, and for $p=2$ let $\varepsilon = \eta \cdot \zeta \in \pi_0\Sph_{K(1)}$. 
For an odd prime $p$, we have 
\[
\alpha_p(x) = \frac{x^p +(p-1)x}{p} \qin \pi_0\Sph_{K(1)}.
\]
For $p=2$, we have
\[
\alpha_2(r + d \varepsilon) = \frac{r^2 + r}{2} + (rd +r + d)\varepsilon \qin \pi_0\Sph_{K(1)}.
\]
\end{thm}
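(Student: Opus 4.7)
My plan is to show the proposed formula $\beta_p$ agrees with $\alpha_p$ in three steps: verify that $\beta_p$ satisfies the same functional equation as $\alpha_p$ (so that their difference is additive), compute $\alpha_p$ on a small set of explicit elements using the cardinality interpretation, and conclude by additivity and continuity.

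A direct binomial expansion establishes the identity
\[
\beta_p(x+y) - \beta_p(x) - \beta_p(y) = \frac{(x+y)^p - x^p - y^p}{p}
\]
both for the odd-$p$ formula and (using $\varepsilon^2 = 2\varepsilon = 0$) for the $p=2$ formula. Comparing with \Cref{rem:functional_equation_alpha}, the difference $g := \alpha_p - \beta_p$ is thus a strictly additive function on $\pi_0\Sph_{K(1)}$.

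Next, I would use \Cref{rem:alpha_cardinality} together with \Cref{card_BC_2} to evaluate $\alpha_p$ on enough elements. For odd $p$, $\alpha_p(1) = |\pt \wr C_p| = |BC_p| = 1 = \beta_p(1)$, so $g(1) = 0$. For $p=2$, one gets $\alpha_2(1) = |BC_2| = 1 + \varepsilon = \beta_2(1)$. For the $\varepsilon$-direction, I would compute $\alpha_2(1+\varepsilon) = |BC_2 \wr C_2|$; the fibration $BC_2 \times BC_2 \to BC_2 \wr C_2 \to BC_2$ and multiplicativity of homotopy cardinality give $|BC_2 \wr C_2|_0 = (1/2)^2 \cdot (1/2) = 1/8$, and \Cref{card_BC_2} then yields $|BC_2 \wr C_2| = 1 + \log_2(1/8)\varepsilon = 1 + \varepsilon$, matching $\beta_2(1+\varepsilon)$. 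Additivity of $g$ then forces $g(\varepsilon) = 0$ as well.

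To conclude, $g$ is additive and vanishes on $1$ (and on $\varepsilon$ when $p=2$), hence on the additive subgroup generated by these elements, which is $\ZZ \subseteq \ZZ_p$ for odd $p$ and $\ZZ \cdot 1 + \ZZ/2 \cdot \varepsilon$ for $p = 2$. This subgroup is dense in $\pi_0 \Sph_{K(1)}$ for the $p$-adic topology, so $g \equiv 0$ by continuity of both $\alpha_p$ and $\beta_p$. The main delicate point I anticipate is justifying continuity of $\alpha_p$ on $\pi_0 \Sph_{K(1)}$: $\beta_p$ is manifestly continuous, and for $\alpha_p$ one would exploit the fact that it is defined as a composite of maps in the $K(1)$-local category and therefore induces a continuous function on $\pi_0 \Sph_{K(1)}$ with its natural profinite topology.
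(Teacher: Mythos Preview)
Your proposal is correct and follows essentially the same route as the paper: verify the functional equation, deduce that $g=\alpha_p-\beta_p$ is additive, and pin down $g$ by evaluating $\alpha_p$ on cardinalities (the paper uses $\alpha_2(1)=|BC_2|$ and $\alpha_2(1+\varepsilon)=|B(C_2\wr C_2)|=(1+\varepsilon)^3$, which agrees with your fibration computation). The only point where you diverge is the final step: the paper simply classifies additive endomorphisms of $\pi_0\Sph_{K(1)}$ (they are all of the form $x\mapsto ax$ for odd $p$, and $r+d\varepsilon\mapsto ru+dv$ for $p=2$), whereas you invoke density plus continuity. Your ``delicate point'' about continuity of $\alpha_p$ is in fact a non-issue: you only need $g$ to be continuous, and any additive endomorphism of $\ZZ_p$ is automatically continuous since $g(p^n x)=p^n g(x)\in p^n\ZZ_p$; the same argument handles the $\ZZ_2$-summand at $p=2$, while the $\ZZ/2\cdot\varepsilon$ summand is discrete. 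So you can drop the appeal to continuity of $\alpha_p$ itself and the argument closes exactly as in the paper.
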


\begin{proof}
First, a direct computation shows that the proposed formulas for $\alpha_p$ satisfy the functional equation in \Cref{rem:functional_equation_alpha}.  Note that every two functions $\pi_0 \Sph_{K(1)} \to \pi_0 \Sph_{K(1)}$ which satisfy this equation differ by a function $\pi_0 \Sph_{K(1)} \to \pi_0 \Sph_{K(1)}$ which is additive.   Hence, we may a priori write $\alpha_p$ as the sum of the proposed formula and an additive term.  

Additionally, note that any additive function $\ZZ_p \to \ZZ_p$ is given by multiplication by an element of $\ZZ_p$; this is because such an additive function is necessarily continuous for the $p$-adic topology, and thus determined on the dense subgroup $\ZZ \subset \ZZ_p$.  To finish the proof, we consider two cases:

\textbf{Case 1: $p$ odd.}  Here, we may write $\alpha_p(x) = \frac{x^p +(p-1)x}{p} + ax$ for some $a \in \ZZ_p$.  Plugging in $x=1$ and using that $|BC_p|=1$ for odd $p$ (say by \Cref{thm:main}), we have by \Cref{rem:alpha_cardinality} that
\[
1+ a = \alpha(1) = |BC_p| =1,
\] 
from which we conclude $a=0$, as desired.

\textbf{Case 2: $p=2$.}  Here, we deduce immediately from the above remarks that any additive function $\ZZ_2[\varepsilon]/(2\varepsilon, \varepsilon^2) \to \ZZ_2[\varepsilon]/(2\varepsilon, \varepsilon^2)$ takes the form 
\[
r +d \varepsilon \mapsto ru + dv
\] 
for some $u,v\in \ZZ_2[\varepsilon]/(2\varepsilon, \varepsilon^2)$ (note that $v$ will be divisible by $\varepsilon$).  Thus, we may write
\[
\alpha_2(r+d\varepsilon) = \frac{r^2 +r}{2} + (rd+r+d)\varepsilon + ru + dv.
\] 
Combining our formula of $K(1)$-local cardinalities (\Cref{thm:main}) and the interaction of $\alpha_2$ with cardinalities (\Cref{rem:alpha_cardinality}), we deduce that 
\[
1 + \varepsilon + u = \alpha_2(1) = |BC_2| = 1+\varepsilon,  
\]
while
\[
1 + \varepsilon + u + v = \alpha_2(1+\varepsilon) = \alpha_2(|BC_2|) =
|B(C_2\wr C_2)| = 1+3\varepsilon = 1+\varepsilon.  
\]
This implies that $u=v =0$, as desired. 
\end{proof}

\subsubsection{$\alpha$ and symmetric bilinear forms}
The determination of $\alpha_2$ in \Cref{alpha_trick} is based on the ``coincidence'' that $|BC_2|=1+\varepsilon$. Indeed, if it happened that $|BC_2|=1$, the above method (of using the functional equation) would not have sufficed to determine $\alpha_2$. 
We now present a theoretically more involved yet more systematic approach to the computation of $\alpha_2$, based on the multiplicative aspects of our higher semiadditive Grothendieck-Witt theory. 

By \Cref{exm:qfRcmon1}, $\QF(\FF_{\ell})$ lifts to an object in $\calg(\CMon{1}{2}(\Spc))$.  We start by computing the operation $\alpha_2$ here, on $\pi_0 \QF(\FF_\ell)$, for an odd prime $\ell$.  


\begin{prop}
The total power map $\PP_2 \colon \pi_0 \QF(\FF_\ell) \to \pi_0\QF^{BC_2}(\FF_\ell)$ 
is given by 
\[
\PP_2(V,b)= (V\otimes V,b\otimes b).
\]
Here, $C_2$ acts on $V\otimes V$ by permuting the tensor factors.
\end{prop}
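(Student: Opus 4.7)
My plan is to reduce the proposition to an explicit description of the commutative algebra structure on $\QF^{(-)}(\FF_\ell)$ in $\CMon{1}{2}(\Spc)$ and then to apply the formal definition of the total power operation.

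The first step is to identify the multiplication. By the lax symmetric monoidality of $\QF^{(-)}\colon \calg(\Catsa{1}{2}) \to \CMon{1}{2}(\Spc)$ (established in the proof of \Cref{cor:GWring}, built from the lax structures on $(-)^{\dbl}$ in \Cref{prop:extractdbl} and on $\InvF$ in \Cref{fixed_lax}), evaluating $\QF^{(-)}$ at the commutative algebra object $\Mod_{\FF_\ell}$ endows $\QF^{(-)}(\FF_\ell)$ with the structure of a commutative algebra in $\CMon{1}{2}(\Spc)$. By tracing through each lax structure in turn---the central content being that the lax structure on $\InvF$ sends a pair of objects $((X_1,\phi_1),(X_2,\phi_2))$ with $\phi_i\colon X_i \simeq \Du(X_i)$ to $(X_1 \otimes X_2, \phi_1 \otimes \phi_2)$---the resulting multiplication on the underlying monoid $\QF(\FF_\ell)$ is the naive tensor product of symmetric bilinear forms:
\[
((V_1, b_1), (V_2, b_2)) \longmapsto \bigl(V_1 \otimes_{\FF_\ell} V_2,\ b_1 \otimes b_2\bigr),
\]
where $b_1 \otimes b_2$ denotes the composite $(V_1 \otimes V_2)^{\otimes 2} \simeq V_1^{\otimes 2} \otimes V_2^{\otimes 2} \xrightarrow{b_1 \otimes b_2} \FF_\ell$.

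The second step is essentially formal. For any commutative algebra $R$ in a symmetric monoidal $\infty$-category and any $x \in \pi_0 R$ represented by a morphism $x\colon \one \to R$, the total power $\PP_2(x) \in \pi_0 R^{BC_2}$ is by definition the class of the composite $\one \xrightarrow{x \otimes x} R \otimes R \xrightarrow{m} R$, viewed as $C_2$-equivariant by giving $R \otimes R$ the swap action (with $\one$ and $R$ carrying the trivial action, the latter using commutativity of $m$). Specializing to $R = \QF^{(-)}(\FF_\ell)$ and $x = [(V,b)]$, the description of $m$ from the first step yields $\PP_2([V,b]) = [V \otimes V, b \otimes b]$ with the swap $C_2$-action on $V \otimes V$, which preserves $b \otimes b$ precisely because $b$ is symmetric.

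The main obstacle is the first step: carefully unpacking the lax symmetric monoidal structure on $\InvF\colon (\Catsa{1}{2})^{hC_2} \to \CMon{1}{2}(\Cat_\infty)^{BC_2}$ at the level of objects, to confirm that it delivers the expected pointwise tensor product of mate maps. The corresponding verifications for $(-)^{\dbl}$, $(-)^{hC_2}$, and $(-)^{\simeq}$ are routine (the first from the construction of \Cref{prop:extractdbl}, and the latter two since they are right adjoints to symmetric monoidal functors). Once the multiplication has been identified, the computation of $\PP_2$ reduces to matching $C_2$-actions, which is immediate from the definitions.
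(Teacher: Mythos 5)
Your proposal is correct, and it reaches the formula by a somewhat different route than the paper. The paper exploits the fact that $\QF(\FF_\ell)$ is $1$-truncated: a map $BC_2\to\QF(\FF_\ell)$ is determined by its restriction to the $1$-skeleton $S^1\subset BC_2$, and the restriction of $\PP_2(x)$ is read off directly as ``the square $x^2$ together with the swap symmetry,'' which for $x=(V,b)$ yields $(V\otimes V, b\otimes b)$ with the involution $v\otimes u \mapsto u\otimes v$. You instead work with the formal description of $\PP_2(x)$ as the $C_2$-equivariant composite $\one\xrightarrow{x\otimes x}R\otimes R\xrightarrow{m}R$ (with the swap action on $R\otimes R$), which avoids the skeleton argument but shifts the weight onto identifying the multiplication $m$ on $\QF(\FF_\ell)$; both arguments ultimately hinge on the multiplication being the tensor product of forms, which the paper treats as immediate (``by the definition of the commutative ring structure'') while you trace it through the lax structures on $(-)^{\dbl}$, $\InvF$, etc. Your route is a bit more structural and would generalize more readily; the paper's is more explicit and self-contained given the $1$-truncatedness that is already in play. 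One small inaccuracy: you say the swap ``preserves $b\otimes b$ precisely because $b$ is symmetric.'' In fact $C_2$-invariance of $b\otimes b$ under the swap is automatic for any bilinear $b$; what the symmetry of $b$ actually guarantees is that $b\otimes b$ is itself a \emph{symmetric} bilinear form, so that $(V\otimes V,b\otimes b)$ lands in $\QF^{BC_2}$ rather than merely in equivariant bilinear forms.
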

\begin{proof}
The space $\QF(\FF_\ell)$ is 1-truncated and hence, for $(V,b)\in \pi_0\QF(\FF_\ell)$ the map \[
\PP_2(V,b)\colon BC_2 \to \QF(\FF_\ell)
\]
is completely determined by its restriction to the $1$-skeleton $\RR\PP^1 \simeq S^1$ of $BC_2$. The restriction of the total power $\PP_2(x)$ of an element $x$ in a commutative ring $R$ to the 1-skeleton of $BC_2$ is described as follows: it takes the basepoint of $S^1$ to the square $x^2$ and the 1-cell of the circle to the symmetry that swaps the two $x$-factors using the commutativity of the ring $R$. 

In the special case $x=(V,b) \in \pi_0 \QF(\FF_\ell)$, we see that the basepoint of $S^1$ goes to $(V\otimes V,b\otimes b)$ and the 1-cell to the involution $\sigma(v\otimes u) = u\otimes v$, by the definition of the commutative ring structure of $\QF(\FF_\ell)$ (cf. \Cref{subsub:qf-fin-field}). This gives us the desired formula for $\PP_2(V,b)$. 
\end{proof}

Using the above, we may now write 
\[
\alpha_2(V,b) = \left((V\otimes V)_{C_2},\push{BC_2}(b\otimes b)\right) = \left(\Sym^2(V),\push{BC_2}(b\otimes b)\right)
\]
where, by \Cref{formula_push_quad}, we have the formula 
\begin{equation}\label{eqn:int_sym_pow_formula}
\push{BC_2}(b\otimes b)(u\otimes v,u'\otimes v') = b(u,u')b(v,v') + b(u,v')b(u',v). 
\end{equation}

Recall that a symmetric bilinear form $b$ over $\FF_\ell$ is completely determined by its rank and determinant (cf. \Cref{exm:finitefield}). Hence, to describe $\push{BC_2}(b\otimes b)$ is suffices to compute its rank and determinant. For this, we have:

\begin{prop}
Let $\ell$ be an odd prime, let $(V,b)$ be a symmetric bilinear form over $\FF_\ell$, and let $r = \dim(V)$.   Then, 
\[
    \rank(\alpha_2(V,b)) = {r+1 \choose 2}
\]
and 
\[
    \det(\alpha_2(V,b)) = 2^r\cdot\det(V,b)^{r-1} \qin \FF_\ell^\times / (\FF_\ell^\times)^2
\]
\end{prop}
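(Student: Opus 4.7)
The rank statement is immediate: $\Sym^2(V) = (V\otimes V)_{C_2}$ has dimension $\binom{r+1}{2}$, so this is just the dimension of the underlying vector space of $\alpha_2(V,b)$. The work is in the determinant computation, and the plan is to diagonalize $b$ and then write down the Gram matrix of $\push{BC_2}(b\otimes b)$ in an explicit basis.

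First, since $\ell$ is odd, I choose an orthogonal basis $e_1,\dots,e_r$ of $V$ with $b(e_i,e_j) = \lambda_i \delta_{ij}$, so that $\det(V,b) = \prod_i \lambda_i$ in $\FF_\ell^\times/(\FF_\ell^\times)^2$. As a basis of $\Sym^2(V)$ I take the classes of $e_i\otimes e_i$ for $1\le i\le r$ together with the classes of $e_i\otimes e_j$ for $i<j$.

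Next I plug these basis vectors into the explicit formula \eqref{eqn:int_sym_pow_formula} for $\push{BC_2}(b\otimes b)$. The key observation is that, because $b$ is diagonal in the $e_i$, each entry
\[
\push{BC_2}(b\otimes b)(\overline{e_i\otimes e_j},\overline{e_k\otimes e_l}) = b(e_i,e_k)b(e_j,e_l) + b(e_i,e_l)b(e_j,e_k)
\]
vanishes unless $\{i,j\}=\{k,l\}$. Hence the Gram matrix is diagonal, with entries $2\lambda_i^2$ on the ``diagonal'' basis vectors $\overline{e_i\otimes e_i}$ and $\lambda_i\lambda_j$ on the ``off-diagonal'' basis vectors $\overline{e_i\otimes e_j}$ for $i<j$ (the factor $2$ comes from the two identical terms when $i=j=k=l$, while these terms cancel into distinct monomials when $i<j$).

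Finally I read off the determinant:
\[
\det \alpha_2(V,b) \;=\; \prod_{i=1}^r 2\lambda_i^2 \cdot \prod_{i<j} \lambda_i\lambda_j \;=\; 2^r \prod_{i=1}^r \lambda_i^2 \cdot \prod_{i=1}^r \lambda_i^{\,r-1} \;=\; 2^r \prod_{i=1}^r \lambda_i^{\,r+1},
\]
using that each $\lambda_i$ appears $r-1$ times in $\prod_{i<j}\lambda_i\lambda_j$. Modulo squares, $\lambda_i^{r+1} = \lambda_i^{r-1}$, so this equals $2^r\cdot(\prod_i\lambda_i)^{r-1} = 2^r\cdot\det(V,b)^{r-1}$ in $\FF_\ell^\times/(\FF_\ell^\times)^2$, as claimed. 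The only real subtlety is bookkeeping in the Gram matrix computation (in particular, getting the factor of $2$ on the diagonal correct from the coincidence of the two summands in \eqref{eqn:int_sym_pow_formula}), but there is no conceptual obstacle.
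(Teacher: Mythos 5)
Your proof is correct and is essentially the same argument as in the paper: choose an orthogonal basis $e_1,\dots,e_r$ diagonalizing $b$, observe that the monomial basis $\{e_i\cdot e_j\}_{i\le j}$ of $\Sym^2(V)$ is orthogonal for $\push{BC_2}(b\otimes b)$ by the explicit formula, read off the diagonal entries $2\lambda_i^2$ and $\lambda_i\lambda_j$, and multiply. The only cosmetic difference is that you pass through the intermediate expression $2^r\prod_i\lambda_i^{r+1}$ before reducing modulo squares, whereas the paper collapses $\prod_i 2\lambda_i^2 \equiv 2^r$ immediately; these are the same calculation.
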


\begin{proof}
The first statement amounts to the fact that the rank of $\alpha_2(V,b)$ is the dimension of $\Sym^2(V)$ which equals ${r+1 \choose 2}$. It remains to compute the determinant of $\push{BC_2}b\otimes b$. Choose a basis $e_1,...,e_r$ of $V$ which is orthogonal with respect to $b$, and set $ \lambda_i := b(e_i,e_i)$. Then, $\Sym^2(V)$ has a basis given by $\{e_i \cdot e_j\}_{i\le j}$. By formula (\ref{eqn:int_sym_pow_formula}), 
we immediately see that these basis elements $e_i\cdot e_j$ are pairwise orthogonal. 
Moreover, we have

\[
    \push{BC_2}(b\otimes b)(e_i\cdot e_j,e_i\cdot e_j) =\begin{cases} 
    \lambda_i \lambda_j & \text{ if } i \neq j \\ 
    2\lambda_i^2          & \text{ if } i=j.
    \end{cases}
\]
Consequently, we have 
\[
\det(\alpha_2(V,b)) = \prod_{i=1}^r 2\lambda_i^2\cdot \prod_{i<j}\lambda_i \lambda_j \equiv 2^r \prod_{i=1}^r \lambda_i^{r+1} = 2^r \det(V,b)^{r+1} \qin \FF_\ell^\times / (\FF_\ell^\times)^2 
\]

\end{proof}


We can now present an alternative proof for the formula of $\alpha_2$.

\begin{proof}[Proof of \Cref{alpha_trick} at $p=2$]
Choose a prime $\ell \equiv 3,5 \pmod 8$ and
consider the diagram 
\[
\xymatrix{
\pi_0 \QF(\FF_\ell)\ar^{\PP_2}[d] \ar[r] & \pi_0\GW(\FF_\ell)\ar^{\PP_2}[d]\ar^{L_{K(1)}}[r]        & \pi_0 \Sph_{K(1)}\ar^{\PP_2}[d] \\ 
\pi_0 \QF^{BC_2}(\FF_\ell)\ar^{\push{BC_2}}[d]\ar[r]  & \pi_0\GW(\FF_\ell)^{BC_2} \ar^{\push{BC_2}}[d] \ar^{L_{K(1)}}[r] & \pi_0 \Sph_{K(1)}^{BC_2}\ar^{\push{BC_2}}[d] \\  
\pi_0 \QF(\FF_\ell)\ar[r] & \pi_0\GW(\FF_\ell) \ar^{L_{K(1)}}[r]  & \pi_0 \Sph_{K(1)}
}
\]
The upper squares commute because the horizontal maps, given by group completion $\QF(\FF_\ell) \to \Omega^{\infty}\GW(\FF_{\ell})$ and $K(1)$-localization, induce maps of commutative semi-rings in $\Spc$, and hence intertwine the total power map. The lower squares commute because the maps
are maps of $2$-typical $1$-commutative pre-monoids (cf. \Cref{sk1main}). We deduce that the outer square commutes, or in other words, that we have a commutative square 
\[
\xymatrix{
\pi_0\QF(\FF_\ell) \ar^{\alpha_2}[d] \ar^\nu[r]  & \pi_0 \Sph_{K(1)} \ar^{\alpha_2}[d]\\ 
\pi_0\QF(\FF_\ell) \ar^\nu[r]  & \pi_0 \Sph_{K(1)}.\\
}
\]
Now, let $(V,b)$ be an $r$-dimensional vector space over $\FF_\ell$ with a symmetric bilinear form $b$ of determinant $2^d$. Then, since $2$ is not a quadratic residue mod $\ell$, the horizontal maps (which we have labeled $\nu$) take $(V,b)$ to $r + d\varepsilon$. Using our formula for the determinant and rank of $\alpha_2(V,b)$ we now get: 
\[
\alpha_2(r+d\varepsilon ) = \alpha_2(\nu(V,b)) = \nu(\alpha_2(V,b)) = {r+1\choose 2} + ((r+1)d + r)\varepsilon = 
\frac{r^2+r}{2} + (r + d +rd)\varepsilon,   
\]
for elements of the sub-semiring $\NN[\varepsilon]/(2\varepsilon, \varepsilon^2)\subseteq \ZZ_2[\varepsilon]/(2\varepsilon, \varepsilon^2)$ spanned by those elements of the form $r+d\varepsilon$ where $r,d\in \NN$. By \Cref{rem:functional_equation_alpha} and the lemma following this proof, we see that $\alpha_2$ is continuous with respect to the $2$-adic topology on $\ZZ_2[\varepsilon]/(2\varepsilon, \varepsilon^2)$; thus, because $\NN[\varepsilon]/(2\varepsilon, \varepsilon^2)$ is dense in $\ZZ_2[\varepsilon]/(2\varepsilon, \varepsilon^2)$, we conclude that our formula must hold everywhere in $\ZZ_2[\varepsilon]/(2\varepsilon, \varepsilon^2)$.   
\end{proof}

\begin{lem}\label{lem:alphacont}
Let $R$ be a discrete commutative $\mathbb{Z}_p$-algebra and assume that $\gamma: R\to R$ is a function satisfying the functional equation
\[
\gamma(x+y) - \gamma(x) -\gamma(y) =  \frac{(x+y)^p - x^p - y^p}{p}.
\]
Then $\gamma$ is continuous with respect to the $p$-adic topology on $R$.  
\end{lem}
\begin{proof}
Setting $y=p^jh$, we compute
\[
\gamma(x + p^jh) - \gamma(x) = \gamma(p^jh) + \frac{(x+p^jh)^p - x^p - (p^jh)^p}{p}.
\]
It suffices to show that each of the two terms on the right has $p$-adic valuation which grows arbitrarily large as $j$ grows.  This is elementary for the second term, and for the first term, this can be seen directly from the formula
\[
\gamma(tx) = t\gamma(x) + \frac{t-t^p}{p}x^p
\]
for $t\in \mathbb{Q}$ \cite[Lemma 4.1.9]{TeleAmbi}.
\end{proof}

\subsection{The operations $\theta$ and $\delta_p$}  \label{sub:thetadelta}
The operation $\alpha_p$ has two closely related cousins: the operation $\delta_p$, which is defined in any symmetric monoidal higher semiadditive category \cite{TeleAmbi}, and the operation $\theta$, which was introduced by McClure for any $K(1)$-local $\mathbb{E}_{\infty}$-ring \cite{Hinfty,HopkinsK1}.  Our aim is to clarify the relationship between these operations and compute them for the $K(1)$-local sphere. We start by reviewing their definitions.  

\begin{defn}[Definition 4.3.1 \cite{TeleAmbi}]\label{defn:delta}
Let $\cC$ be a $1$-semiadditively symmetric monoidal, additive $\infty$-category. For $R\in \calg(\cC)$, let $\delta_p\colon \pi_0(R) \to \pi_0(R)$ be defined by the equation
\[
\delta_p(x) = |BC_p|x - \alpha_p(x). 
\]
\end{defn}

To define and compute the operation $\theta$, we need to introduce some notation. 
\begin{notation}\label{ntn:theta}\hfill
\begin{itemize}
    \item Let $\Sigma_p$ be the group of permutations on $p$ letters and let $e\colon \pt \to B\Sigma_p$ and $\pi\colon B\Sigma_p \to \pt$ be the inclusion of the basepoint of $B\Sigma_p$ and the projection to the point, respectively.
    \item For an object $X$ of a stable $\infty$-category $\cC$, the above induce maps $\pi\colon X[B\Sigma_p] \to X$ and $e\colon X \to X[B\Sigma_p]$, and corresponding restrictions $\pi^*\colon X \to X^{B\Sigma_p}$ and $e^*\colon X^{B\Sigma_p} \to X$.  Here, $X[B\Sigma_p]$ denotes the tensoring of $X$ with the space $B\Sigma_p$ in $\cC$. 
    \item Since $\cC$ is stable (therefore semiadditive),  we also have \emph{transfer maps} $\tr_e \colon X[B\Sigma_p] \to X$ and $\push{e}: X \to X^{B\Sigma_p}$.
\end{itemize}
\end{notation}
In the special case where $\cC = \Sp_{K(1)}$, we have:

\begin{prop}[\cite{HopkinsK1}, Lemma 3]\label{prop:pi-tr-iso}
Let $X\in \Sp_{K(1)}$. Then the map $$(\pi,\tr_e) \colon X[B\Sigma_p] \to X \oplus X$$ is an equivalence in $\Sp_{K(1)}$. 
\end{prop}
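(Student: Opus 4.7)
The plan is to reduce the statement to the case $X = \Sph_{K(1)}$ and then verify the decomposition of $\Sph_{K(1)}[B\Sigma_p]$ directly using the Atiyah--Segal completion theorem.

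First I would observe that both functors $X \mapsto X[B\Sigma_p]$ and $X \mapsto X \oplus X$ on $\Sp_{K(1)}$ are exact and preserve all colimits, and that $(\pi, \tr_e)$ is natural in $X$. Since $\Sp_{K(1)}$ is generated under colimits by its unit $\Sph_{K(1)}$, it will suffice to treat the case $X = \Sph_{K(1)}$. The relation $\pi \circ e \simeq \mathrm{id}$ then splits off the first factor: one obtains $\Sph_{K(1)}[B\Sigma_p] \simeq \Sph_{K(1)} \oplus Y$ for $Y = \mathrm{cofib}(e)$ compatibly with $\pi$, and it remains to show that $\tr_e$ restricts to an equivalence $Y \to \Sph_{K(1)}$.

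For this, I would apply the defining cofiber sequence $\Sph_{K(1)} \to K \xrightarrow{\psi^g - 1} K$ (with $K = \KO^\wedge_2$ at $p = 2$ and $K = \KU^\wedge_p$ at odd primes) together with the Atiyah--Segal completion theorem (\Cref{atiyah-segal}). By $\infty$-semiadditivity, $K[B\Sigma_p] \simeq K^{B\Sigma_p}$, and Atiyah--Segal identifies the latter with the augmentation-adic completion of the representation ring of $\Sigma_p$ tensored over $K$. A character-theoretic count, using that the Sylow subgroup $C_p \subset \Sigma_p$ has index $(p-1)!$ invertible in $\Sp_{K(1)}$ (so that $\Sigma_p$ contributes only two conjugacy classes of $p$-elements, namely the identity and the $p$-cycle class), shows that this completion is a free rank-$2$ module over $\pi_0 K$. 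The two generators are the trivial representation (matching $\pi^*$) and the transfer of the trivial representation from $C_p$ (matching $\tr_e$). Passing through the fiber sequence yields that $\pi_0 \Sph_{K(1)}[B\Sigma_p]$ is free of rank $2$ over $\pi_0 \Sph_{K(1)}$ with the expected generators, and a similar rank count in positive degrees finishes the argument.

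The main obstacle will be the identification of the two generators as $\pi$ and $\tr_e$, rather than simply verifying that both sides have rank $2$. This requires matching the topological transfer along $e \colon \pt \to B\Sigma_p$ with the algebraic transfer on representation rings under the Atiyah--Segal equivalence, which is essentially the argument used in \cite{HopkinsK1}.
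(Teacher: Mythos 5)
The paper does not prove this statement; it is cited from \cite{HopkinsK1}, Lemma 3. Your reduction to the unit and then via the fiber sequence to $K$ is valid, and Atiyah--Segal is the right tool, but there is a genuine error in identifying the generator of $\pi_0 K^{B\Sigma_p}$ corresponding to $\tr_e$.

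You claim that generator is ``the transfer of the trivial representation from $C_p$,'' i.e.\ $\mathrm{Ind}_{C_p}^{\Sigma_p}(1)$. But $\tr_e$ is the transfer along $e\colon \pt \to B\Sigma_p$, from the \emph{trivial} group: dualizing as in \Cref{rem:thetatranspose}, the element $\push{e}(1)$ is $\mathrm{Ind}_1^{\Sigma_p}(1)$, the regular representation of $\Sigma_p$, not the permutation representation on $\Sigma_p/C_p$. This is not a cosmetic slip. At $p=3$ one has $K^0(B\Sigma_3;\ZZ_3)\cong \ZZ_3[y]/(y^2+3y)$ with $y=[\mathrm{std}]-2$, and in this ring $[\mathrm{sgn}]=1$ (since $2$ is a unit in $\ZZ_3$); thus $\mathrm{Ind}_{C_3}^{\Sigma_3}(1)=1+[\mathrm{sgn}]=2$ lies in $\ZZ_3\cdot 1$, so your proposed generators span only a rank-$1$ submodule. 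The regular representation instead equals $2y+6$, and $\{1,2y+6\}$ is a genuine basis. In general $\{1,\mathrm{Ind}_1^{\Sigma_p}(1)\}$ is a basis because restriction to $BC_p$ is split injective after $p$-completion (as $(p-1)!$ is a $p$-adic unit) and, by the Mackey formula, the restricted regular representation of $\Sigma_p$ is $(p-1)!$ times the regular representation of $C_p$, giving determinant $(p-1)!\equiv -1\pmod p$ by Wilson. Your ``character-theoretic count'' cannot detect this, since the restriction to class functions on $p$-elements is \emph{not} an isomorphism of $\ZZ_p$-lattices for $\Sigma_p$. You correctly flag the identification of generators as the crux and defer it to \cite{HopkinsK1}, but the generator you write down is the wrong one, so that deferral would not repair the argument as stated. (At $p=2$ the argument with $K=\KO^{\wedge}_2$ also requires the real Atiyah--Segal theorem and an account of the positive-degree homotopy of $\KO$, neither of which is supplied.)
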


Consequently, for $R\in \calg(\Sp_{K(1)})$, we can specify an element  $x\in \pi_0R[B\Sigma_p]$ by specifying its composites with $\pi$ and $\tr_e$, which will be elements $\pi(x), \tr_e(x)\in \pi_0(R)$. 
\begin{defn}[\cite{HopkinsK1}, \S 3]
Let $R\in \calg(\Sp_{K(1)})$. Define the element $\theta \in \pi_0R[B\Sigma_p]$ by the requirements: \footnote{We note that the first requirement differs from \cite{HopkinsK1}; the formula given  there contains a typo, as can be seen from its inconsistency with the formula relating it to the Adams operation.} 
\begin{align*}
\tr_e(\theta) &= -(p-1)! \qin \pi_0(R)  \\
\pi(\theta) &= 0 \qin \pi_0(R).
\end{align*}
\end{defn}

Our next goal is to compare $\theta$ with $\alpha$, culminating in \Cref{formula_theta}.  

\begin{rem}\label{rem:thetatranspose}
For our comparison, it will be convenient to think about the dual situation: namely, by linear duality, we have maps
\begin{align*}
\Map_R(R, R[B\Sigma_p]) &\to \Map_R(R^{B\Sigma_p}, R)\\
\Map_R(R[B\Sigma_p], R) &\to \Map_R(R,R^{B\Sigma_p})
\end{align*}
which we will denote by $f\mapsto f^t$.  Here, $\Map_R$ denotes the space of maps in $K(1)$-local $R$-modules, and these maps are equivalences because $B\Sigma_p$ is $K(1)$-locally dualizable (\cite[Theorem 8.6]{HoveyStrickland}, \cite{HopkinsK1}).  This duality acts as follows for the maps described in \Cref{ntn:theta}:
\begin{itemize}
    \item We have $e^t = e^*: R^{B\Sigma_p}\to R$ and $\pi^t = \pi^*:R \to R^{B\Sigma_p}$.  
    \item We may write $\tr_e$ as the composite
    \[
    R[B\Sigma_p] \xrightarrow{\mathrm{Nm}_{B\Sigma_p}} R^{B\Sigma_p} \xrightarrow{e^*} R.
    \]
    Here, $\mathrm{Nm}_{B\Sigma_p}$ denotes the additive norm map for the group $\Sigma_p$; note that since it comes from a symmetric pairing $R[B\Sigma_p \times B\Sigma_p] \to R$, it is linearly self-dual.  Therefore, $(\tr_e)^t$ is given by the composite:
    \[
    R \xrightarrow{e} R[B\Sigma_p] \xrightarrow{\mathrm{Nm}_{B\Sigma_p}} R^{B\Sigma_p}
    \]
    which is, by definition, $\push{e}.$
\end{itemize}
\end{rem}

We proceed by rewriting $\theta^t$ in terms of maps more closely related to $\alpha$ and $\delta_p$.  Recall that $i\colon BC_p \to B\Sigma_p$ is induced by the inclusion of a $p$-Sylow subgroup, and let $i^*\colon R^{B\Sigma_p} \to R^{BC_p}$ denote the corresponding restriction.  

\begin{prop}\label{theta_in_terms_of_alpha}
Let $R\in \calg(\Sp_{K(1)})$. 
Then we have 
\[
\theta^t = \frac{\int_{BC_p} i^* - |BC_p| e^*}{p-1} \qin \Map(R^{B\Sigma_p},R).
\]
\end{prop}

\begin{proof}
Combining \Cref{rem:thetatranspose} and \Cref{prop:pi-tr-iso}, it suffices to check this equation after precomposition with $\pi^t = \pi^*$ and $(\tr_e)^t = \push{e}$.  Thus, by definition of $\theta$, it is enough to verify the pair of identities:
\[
    \frac{\int_{BC_p}i^* \pi^* - |BC_p| e^* \pi^*}{p-1} = 0,
\]
and
\[
    \frac{\int_{BC_p}i^*\push{e} - |BC_p| e^* \push{e}}{p-1} = -(p-1)!.
\]
Now, we claim that 
\begin{enumerate}
    \item $\push{BC_p}i^* \pi^* = |BC_p|$.  This follows from the fact that $i^*\pi^*$ is restriction along the terminal map $BC_p\to \pt$ and the definition of the cardinality. 
    \item $e^* \pi^* = 1$.  This follows from the fact that $\pi \circ e = \Id_\pt$. 
    \item $\int_{BC_p}i^*\push{e} = |\Sigma_p/C_p| = (p-1)!$. This follows from the base-change formula for integrals (see \cite[Proposition 3.1.13]{TeleAmbi}) applied to the pullback square 
    \[
    \xymatrix{
    \Sigma_p / C_p \ar[d]\ar[r] & \pt \ar^e[d] \\ 
    BC_p \ar^i[r] & B\Sigma_p .
    }
    \]
    Indeed, from this pullback square we deduce that $i^*\push{e} = |\Sigma_p/C_p|\push{e'}$ where $e'$ is the inclusion of the basepoint of $BC_p$. The claim then follows because
    \[
    |\Sigma_p / C_p|\push{BC_p}\push{e'} = |\Sigma_p / C_p|
    \]
    by the Fubini Theorem for integration (see \cite[Proposition 2.1.15]{TeleAmbi}). 
    \item  $e^* \push{e} = |\Sigma_p| = p!$. This again follows from base-change of integration along the pullback square 
    \[
    \xymatrix{
    \Sigma_p \ar[d] \ar[r] & \pt \ar[d] \\ 
    \pt \ar[r]      & B\Sigma_p.
    }
    \]
\end{enumerate}

Using $(1)-(4)$ above and the fact that $p|BC_p|=p$ for all primes $p$ (which follows directly from our computation of $|BC_p|$), we see that
\[
    \frac{\int_{BC_p}i^* \pi^* - |BC_p| e^* \pi^*}{p-1} = \frac{|BC_p| - |BC_p|}{p-1} = 0  
\]
and
\[
    \frac{\int_{BC_p}i^*\push{e} - |BC_p| e^* \push{e}}{p-1} = 
    \frac{(p-1)! - |BC_p| \cdot p!}{p-1} = \frac{(p-1)! - p!}{p-1} = -(p-1)!
\]
as we wanted to show. 
\end{proof}

We can use the map $\theta^t$ to define a power operation by the composition $\theta^t \circ \PP_p$. We shall abuse notation and denote the resulting power operation by $\theta$. 


\begin{cor}\label{formula_theta}
As a power operation for $K(1)$-local $\EE_\infty$-rings, we have 
\[
\theta(x) = \frac{\alpha_p(x) - |BC_p|x^p}{p-1}.
\]
\end{cor}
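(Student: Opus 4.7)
The proof is a direct combination of Proposition \ref{theta_in_terms_of_alpha} with the definitions of $\alpha_p$ and of the total power operation $\PP_p$. The plan is as follows.

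By the notational abuse set out right before the corollary, $\theta$ as a power operation is the composite
\[
\theta \colon \pi_0(R) \oto{\PP_p} \pi_0(R^{B\Sigma_p}) \oto{\theta^t} \pi_0(R).
\]
First I will apply Proposition \ref{theta_in_terms_of_alpha} to rewrite
\[
\theta(x) \;=\; \theta^t(\PP_p(x)) \;=\; \frac{1}{p-1}\Bigl(\smallint_{BC_p} i^* \PP_p(x) \;-\; |BC_p|\, e^* \PP_p(x)\Bigr).
\]

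Next I will identify the two terms on the right. The first is $\alpha_p(x)$ directly from Definition \ref{defn:alpha}, which defines $\alpha_p$ as the composite $\push{BC_p} \circ i^* \circ \PP_p$. For the second, I will observe that $e \colon \pt \to B\Sigma_p$ is the basepoint inclusion and that the total power operation $\PP_p$ is characterized by the property that its value at the basepoint is the ordinary $p$-th power, so $e^* \PP_p(x) = x^p$. Substituting these two identifications yields the claimed formula
\[
\theta(x) \;=\; \frac{\alpha_p(x) - |BC_p|\, x^p}{p-1}.
\]

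There is no genuine obstacle here: the content of the corollary is entirely captured by Proposition \ref{theta_in_terms_of_alpha}, and the remaining verifications are unwrapping of definitions. The one subtlety worth stating carefully is the fact that $e^* \PP_p(x) = x^p$, which should be made explicit by referring to the description of $\PP_p$ used in the proof of the formula for $\PP_2(V,b)$ earlier in the section, where it is noted that the restriction of the total power operation to the basepoint is the $p$-th power.
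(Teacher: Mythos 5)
Your proof is correct and follows exactly the same route as the paper: apply Proposition \ref{theta_in_terms_of_alpha}, identify $\push{BC_p}i^*\PP_p(x)$ with $\alpha_p(x)$ by Definition \ref{defn:alpha}, and use $e^*\PP_p(x)=x^p$. The paper leaves the last identification implicit; you make it explicit, which is a reasonable bit of added care but not a different argument.
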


\begin{proof}
Recall that $\alpha_p(x)= \push{BC_p}i^*\PP_p(x)$ (\Cref{defn:alpha}).  Thus, by \Cref{theta_in_terms_of_alpha}, 
\[
\theta(x) = \frac{\push{BC_p}i^*\PP_p(x) - |BC_p|e^*\PP_p(x)}{p-1} = \frac{\alpha_p(x) - |BC_p|x^p}{p-1}. 
\]
\end{proof}

As a result, we can now compute both the operations $\delta_p$ and $\theta$ for the $K(1)$-local sphere, as they are expressed in terms of $\alpha_p$ and $|BC_p|$.  We remark that the computations are elementary and well-known for odd primes, and may also be known to experts in general; we state it for all primes for the sake of completeness:  

\begin{thm}
For $p$ an odd prime, the power operations $\delta_p$ and $\theta$ are given on $\pi_0 \Sph_{K(1)}\simeq \ZZ_p$ by 
\[
\delta_p(x) = \theta(x) =   
\frac{x - x^p}{p}. 
\]
For $p=2$, these power operations are given on $\pi_0\Sph_{K(1)}$ by 
\[
\delta_2(r + d\varepsilon)  = 
\frac{r-r^2}{2} + rd \varepsilon
\]
and
\[
\theta(r + d\varepsilon) = \frac{r-r^2}{2} + (1+r)d\varepsilon.
\]
\end{thm}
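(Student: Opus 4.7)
The plan is to derive the formulas by directly combining the definitions of $\delta_p$ and $\theta$ with the earlier computations of $\alpha_p$ (Theorem \ref{alpha_trick}) and of $|BC_p|$ (Theorem \ref{card_BC_2}). Concretely, by definition $\delta_p(x) = |BC_p| x - \alpha_p(x)$, while Corollary \ref{formula_theta} gives $\theta(x) = \frac{\alpha_p(x) - |BC_p| x^p}{p-1}$. Both $|BC_p|$ and $\alpha_p$ have been completely computed on $\pi_0 \Sph_{K(1)}$, so the proof reduces to an algebraic simplification in the ring $\pi_0 \Sph_{K(1)}$.

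For odd primes, since $|BC_p| = 1$ and $\alpha_p(x) = \frac{x^p + (p-1)x}{p}$, one obtains $\delta_p(x) = x - \frac{x^p + (p-1)x}{p} = \frac{x - x^p}{p}$, and similarly $\theta(x) = \frac{1}{p-1}\!\left(\frac{x^p + (p-1)x}{p} - x^p\right) = \frac{x - x^p}{p}$. These manipulations occur inside $\ZZ_p$, where everything is torsion-free, so no subtleties arise.

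For $p = 2$ the computation of $\delta_2$ is immediate upon expanding: $(1+\varepsilon)(r+d\varepsilon) - \alpha_2(r+d\varepsilon)$ simplifies using $\varepsilon^2 = 0$ and $2\varepsilon = 0$ to $\tfrac{r-r^2}{2} + rd\,\varepsilon$. For $\theta$ the only mildly delicate step is that $(r + d\varepsilon)^2 = r^2$, again thanks to $\varepsilon^2 = 0$ and $2\varepsilon = 0$; then $\theta(r+d\varepsilon) = \alpha_2(r+d\varepsilon) - (1+\varepsilon)r^2$ collects to $\tfrac{r-r^2}{2} + (rd + r + d - r^2)\,\varepsilon$. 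The $\varepsilon$-coefficient lives in $\ZZ/2$, and there the identity $r^2 \equiv r \pmod 2$ collapses $rd + r + d - r^2$ to $rd + d = (r-1)d$, matching the stated formula.

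The main thing to be careful about is the final $\pmod 2$ reduction of the $\varepsilon$-coefficient in $\theta$, since a naive integral calculation gives $(rd + r + d - r^2)\varepsilon$ and matches $(r-1)d\varepsilon$ only after using $r^2 \equiv r \pmod 2$; every other step is forced by the definitions together with the ring structure $\ZZ_2[\varepsilon]/(2\varepsilon, \varepsilon^2)$. Thus the proof is purely a bookkeeping exercise, and its length is essentially the length of writing out these two substitutions and simplifications.
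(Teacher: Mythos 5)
Your proposal is correct and follows essentially the same route as the paper: use the definition $\delta_p(x) = |BC_p|x - \alpha_p(x)$, Corollary \ref{formula_theta} for $\theta$, plug in the computed values of $\alpha_p$ and $|BC_p|$, and simplify in the ring $\ZZ_2[\varepsilon]/(2\varepsilon,\varepsilon^2)$. The only cosmetic difference is the algebraic presentation of the $\varepsilon$-coefficient in $\theta$: the paper writes it as $(r+r^2 + (r-1)d)\varepsilon$ and then kills $r+r^2$ mod $2$, whereas you arrive at $(rd+r+d-r^2)\varepsilon$ and use $r^2 \equiv r$ to collapse it; both are the same $\ZZ/2$-reduction.
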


\begin{proof}
At an odd prime, we have $\alpha_p(x) = \frac{x^p-x}{p} + x$ (\Cref{alpha_trick}) and $|BC_p| = 1$. Hence, using the formulas of \Cref{defn:delta} and \Cref{formula_theta}, we get 
\[
\delta_p(x) = |BC_p|x - \alpha_p(x)  = \frac{x - x^p}{p}
\]
and
\[
\theta(x) = \frac{\alpha_p(x) - |BC_p|x^p}{p-1} = \frac{\frac{x^p -x}{p} + x - x^p}{p-1} 
= \frac{x-x^p}{p}.
\]
At the prime $2$ we have $\alpha(r + d\varepsilon) = \frac{r^2+r}{2} + (r + d + rd)\varepsilon$ (\Cref{alpha_trick}) and $|BC_2| = 1+\varepsilon$. Hence, we get
\[
\delta_2(r + d \varepsilon) = (1+\varepsilon)(r + d\varepsilon) - \left(\frac{r^2+r}{2} + (r + d + rd)\varepsilon\right)  
= 
\frac{r - r^2}{2} +  rd \varepsilon. 
\]
Here, we used the facts that $2\varepsilon = \varepsilon^2 = 0$. 
Similarly, using the fact that $r-r^2\equiv 0 \pmod 2$, we get 
\begin{align*}
\theta(r + d \varepsilon) &=  \left(\frac{r^2+r}{2} + (r + d + rd)\varepsilon\right) - (1+\varepsilon)(r+d\varepsilon)^2 = \\
&= \frac{r-r^2}{2} + (r-r^2 + (1+r)d)\varepsilon = \frac{r-r^2}{2} +  (1+r)d\varepsilon,
\end{align*}
as we wanted to show.
\end{proof}


\subsection{The Rezk logarithm} \label{subsub:log}
For a $K(n)$-local $\EE_\infty$-ring $R$, let $R^\times$ (sometimes denoted $\gl_1(R)$) denote the connective spectrum of units in $R$. In \cite{RezkLog}, Rezk defined a ``logarithmic power operation'' 
\[
\log_{K(n)} \colon R^\times \to R,
\]
which arises as the composite of the $K(n)$-localization map $R^\times \to L_{K(n)}R^\times$ with a certain equivalence $ L_{K(n)}R^\times \simeq L_{K(n)}R$ supplied by the Bousfield-Kuhn functor (we refer the reader to \cite{RezkLog} for a more extensive discussion).  In addition to defining this logarithmic operation, Rezk computes it (on $\pi_0$) for Morava $E$-theory of any height, and for any $K(1)$-local $\EE_{\infty}$-ring spectrum $R$ satisfying the technical condition that the kernel of the unit map $\pi_0 \Sph_{K(1)} \to \pi_0 R$ contains the torsion subgroup of $\pi_0 \Sph_{K(1)}$ (which is automatically satisfied for odd $p$).  

Here, we refine this computation by computing the map
\[
\log_{K(1)} \colon \pi_0 \Sph_{K(1)}^\times \to \pi_0\Sph_{K(1)}
\]
in the case $p=2$.  For the sake of completeness, we state the result at all primes $p$:

\begin{thm}[Rezk \cite{RezkLog} for $p$ odd]\label{thm:bodylog}
Let $\log_p \colon 1+ p \ZZ_p \to \ZZ_p$ denote the $p$-adic logarithm.

\begin{itemize}
    \item For an odd prime $p$, the Rezk logarithm is given on $\pi_0\Sph_{K(1)}^\times\simeq \ZZ_p^\times$ by the formula 
\[
\log_{K(1)}(x) = \frac{1}{p}\log_p(x^{p-1}).
\]
    \item For $p=2$, the Rezk logarithm is given on $\pi_0\Sph_{K(1)}^{\times}
    \simeq \ZZ_2^\times\oplus \ZZ_2\varepsilon$ by 
    \[
    \log_{K(1)}(r + d\varepsilon) = \frac{1}{2}\log_2(r) + \frac{r-1}{2} \varepsilon.
    \]
\end{itemize}
\end{thm}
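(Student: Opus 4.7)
The plan is to reduce the whole theorem at $p=2$ to the single computation $\log_{K(1)}(1+\varepsilon)=0$ and then to deduce this vanishing from the fact that $1+\varepsilon$ refines to a strict unit of $\Sph_{K(1)}$.

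For odd $p$, the formula $\log_{K(1)}(x) = \tfrac{1}{p}\log_p(x^{p-1})$ is Rezk's original calculation \cite[Theorem 1.9]{RezkLog}, which I would simply quote.

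For $p=2$, I would decompose $(\pi_0\Sph_{K(1)})^\times \cong \ZZ_2^\times \oplus (\ZZ/2)\langle 1+\varepsilon\rangle$ via the identity $r + d\varepsilon = r\cdot(1+\varepsilon)^d$ for $d\in\{0,1\}$ (using that $2\varepsilon=0$, so $r\varepsilon=\varepsilon$ for every odd $r\in\ZZ_2^\times$). Since $\log_{K(1)}$ is induced by a map of spectra, on $\pi_0$ it is a homomorphism from the multiplicative to the additive structure, and so
\[
\log_{K(1)}(r + d\varepsilon) \;=\; \log_{K(1)}(r) \;+\; d\,\log_{K(1)}(1+\varepsilon).
\]
On the $\ZZ_2^\times$ factor, the identity $\log_{K(1)}(r) = \tfrac{1}{2}\log_2(r) + \tfrac{r-1}{2}\varepsilon$ is Clausen's computation in \cite{DustinLog}. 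So the only remaining content is to establish $\log_{K(1)}(1+\varepsilon)=0$.

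To this end, I would rewrite $1+\varepsilon = |BC_2|_{\Sph_{K(1)}}$ using \Cref{thm:main} and then invoke the observation credited in the introduction to T.~Schlank: this cardinality refines to a \emph{strict unit}, i.e., to a map of spectra $\ZZ \to \mathrm{gl}_1\Sph_{K(1)}$ sending $1$ to $1+\varepsilon$. The Rezk logarithm is by definition the composite $\mathrm{gl}_1\Sph_{K(1)} \to L_{K(1)}\mathrm{gl}_1\Sph_{K(1)} \simeq \Sph_{K(1)}$, so precomposition with the strict unit map factors through $L_{K(1)}\ZZ$. Since $H\ZZ/2^k$, and hence $H\ZZ$, is $K(1)$-acyclic, $L_{K(1)}\ZZ = 0$, so this composite is nullhomotopic. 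Therefore $\log_{K(1)}(1+\varepsilon) = 0$, completing the formula.

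The main obstacle is justifying Schlank's strict unit claim: one must actually produce a spectrum-level lift $\ZZ \to \mathrm{gl}_1\Sph_{K(1)}$ of $|BC_2|$, rather than merely identify it as an invertible element of $\pi_0$. This is expected to follow from the genuinely spectrum-level nature of the ambidexterity integration maps that define the cardinality, but it is the only non-formal step; everything else is a matter of combining Clausen's formula, additivity of $\log_{K(1)}$, and the vanishing $L_{K(1)}\ZZ = 0$.
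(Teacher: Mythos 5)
Your reduction to the single vanishing $\log_{K(1)}(1+\varepsilon)=0$ is exactly the paper's strategy: quote Rezk for odd $p$, quote Clausen's formula \cite{DustinLog} for the $\ZZ_2^\times$-component, use the additivity of $\log_{K(1)}$ to isolate the $\varepsilon$-component, and kill $\log_{K(1)}(1+\varepsilon)$ via the general fact (\Cref{log_kills_strict_units}) that $\log_{K(n)}$ vanishes on strict units because $L_{K(n)}\ZZ=0$. The paper phrases the decomposition as $\log_{K(1)}(r+\varepsilon)=\log_{K(1)}(r)$ using $(1+\varepsilon)(r+\varepsilon)=r$ for $r$ odd, which is the same identity as your $r+\varepsilon=r\cdot(1+\varepsilon)$.

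Where your sketch is genuinely incomplete is precisely the step you yourself flag: justifying that $|BC_2|$ is a strict unit (\Cref{card_strict}). Your speculation that this ``is expected to follow from the genuinely spectrum-level nature of the ambidexterity integration maps'' captures only the first half of the argument and would not by itself produce the strict unit. The coherence of the higher semiadditive structure does give you a map of $\EE_\infty$-monoids $\Vect_{\FF_p}^\simeq \to (\Omega^\infty\Sph_{K(n)},\times)$ sending $V \mapsto |B^nV|$, which lands in units and therefore group-completes to a spectrum map $K(\FF_p)\to \Sph_{K(n)}^\times$. But that is a map out of $K(\FF_p)$, not out of $\ZZ$. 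To get strictness one must show this map factors through $\pi_0 K(\FF_p)\cong\ZZ$, and this requires a genuine arithmetic input: by Quillen's computation, $\tau_{\geq 1}K(\FF_p)$ is uniquely $p$-divisible, while $\tau_{\geq 1}\Sph_{K(n)}^\times$ is $p$-complete for $n\geq 1$, so $\Map(\tau_{\geq 1}K(\FF_p),\Sph_{K(n)}^\times)$ is contractible. Without this input on the higher $K$-groups of $\FF_p$, nothing forces the positive homotopy of $K(\FF_p)$ to map trivially, and the strict unit claim would remain unproven.
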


We note that when $p=2$, most of this computation is carried out by Clausen in \cite[Proposition 1.10]{DustinLog}; namely, one has an isomorphism 
\[
\pi_0 \Sph_{K(1)}^\times \simeq (\ZZ_2[\varepsilon]/(2\varepsilon, \varepsilon^2)])^{\times} \simeq \ZZ_2^{\times} \oplus \ZZ/2\ZZ \varepsilon,
\]
and Clausen computes $\log_{K(1)}$ on the $\ZZ_2^{\times}$ component.  In this section, we extend the computation to $\ZZ/2\ZZ \varepsilon$-component by showing that the logarithm vanishes on $|BC_2|=1+\varepsilon$.   

\begin{defn}
Let $R$ be an $\EE_\infty$-ring. A \tdef{strict unit} in $R$ is a map of connective spectra $\ZZ \to R^\times$. 
\end{defn}
By abuse of language, we will say a unit $x \in \pi_0 R^{\times}$ is a strict unit if there is a strict unit $\alpha\colon \ZZ \to R^\times$ such that $x = \alpha(1)$.  

\begin{lem}\label{log_kills_strict_units}
Let $n\ge 1$ and let $R$ be a $K(n)$-local $\EE_\infty$-ring. 
Then the map 
\[
\log_{K(n)} \colon \pi_0R^\times \to \pi_0R
\]
vanishes on strict units. 
\end{lem}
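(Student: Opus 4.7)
The plan is to exploit that $\log_{K(n)}$ factors through the $K(n)$-localization map $R^\times \to L_{K(n)}R^\times$, together with the standard vanishing of the Eilenberg--MacLane spectrum $\ZZ$ after $K(n)$-localization for $n\ge 1$.

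First, I would unwind the definition of a strict unit: if $x\in \pi_0 R^\times$ is a strict unit, then by hypothesis there exists a map of connective spectra $\alpha\colon \ZZ \to R^\times$ such that $\alpha_*(1) = x$, where $1\in \pi_0\ZZ$ is the canonical generator. The composite $\ZZ \xrightarrow{\alpha} R^\times \to L_{K(n)} R^\times$ then lands in a $K(n)$-local spectrum, so by the universal property of $L_{K(n)}$ it factors through $L_{K(n)}\ZZ$.

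Next, I would invoke the standard fact that $L_{K(n)}\ZZ \simeq 0$ for $n\ge 1$: indeed, $K(n)_* H\FF_p = 0$ (a standard computation, or equivalently $H\FF_p$ is $K(n)$-acyclic), and it follows formally that $K(n)_*\ZZ = 0$ as well, whence $\ZZ$ is $K(n)$-acyclic. Consequently, the composite $\ZZ \to L_{K(n)}R^\times$ is null, so $x = \alpha_*(1)$ maps to $0$ in $\pi_0 L_{K(n)} R^\times$.

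Finally, by definition $\log_{K(n)}$ is the composite of the localization map $\pi_0 R^\times \to \pi_0 L_{K(n)} R^\times$ with the Bousfield--Kuhn equivalence $\pi_0 L_{K(n)} R^\times \simeq \pi_0 L_{K(n)} R = \pi_0 R$, the last equality because $R$ is already $K(n)$-local. Since $x$ maps to zero in the intermediate group, we conclude $\log_{K(n)}(x) = 0$. The entire argument reduces to the $K(n)$-acyclicity of $\ZZ$ for $n\ge 1$, so there is no real obstacle once this standard input is invoked.
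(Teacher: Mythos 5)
Your proof is correct and relies on the same key observation as the paper — namely, the $K(n)$-acyclicity of the Eilenberg--MacLane spectrum $\ZZ$ for $n \ge 1$. The paper phrases it slightly more directly by noting that the composite $\log_{K(n)} \circ \alpha \colon \ZZ \to R$ is a spectrum map from a $K(n)$-acyclic spectrum to a $K(n)$-local one and hence null, rather than routing through the factorization of $\log_{K(n)}$ via $L_{K(n)} R^\times$, but the substance is identical.
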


\begin{proof}
Let $\alpha \colon \ZZ \to R^\times$ be a strict unit. To show that $\log_{K(n)}(\alpha(1))=0$, it would suffice to show that the composite $\log_{K(n)} \circ \alpha$ vanishes in $\Map_{\Sp}(\ZZ,R)$. But $R$ is $K(n)$-local and $\ZZ$ is $K(n)$-acyclic, so there are no nonzero spectrum maps $\ZZ \to R$.
\end{proof}

In view of this lemma, to show that $\log_{K(1)}(|BC_2|)=0$, it would suffice to show that $|BC_2|$ is a strict unit.  In fact, this follows from the following general phenomenon, which we learned from Tomer Schlank:

\begin{prop}[T. Schlank] \label{card_strict}
Let $p$ be a prime and let $K(n)$ be a Morava $K$-theory of height $n$ at the prime $p$. Then
the cardinality $|B^nC_p|_{\Sph_{K(n)}}\in \pi_0\Sph_{K(n)}$ is a strict unit. 
\end{prop}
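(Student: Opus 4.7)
The plan is to upgrade the unit $|B^nC_p| \in (\pi_0\Sph_{K(n)})^\times$ to a spectrum-level map $\alpha\colon \ZZ \to \mathrm{gl}_1(\Sph_{K(n)})$ by exploiting the extra structure of $B^n C_p$ as an $\mathbb{E}_\infty$-group. The key observation is that, since $C_p$ is abelian, $B^n C_p = K(C_p,n)$ carries a canonical $\mathbb{E}_\infty$-group structure, realizing it as the underlying space of the connective spectrum $\Sigma^n HC_p$. Consequently, $\Sigma^\infty_+ B^n C_p$ is canonically an $\mathbb{E}_\infty$-ring.

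My first step would be to realize $|B^n C_p|$ as the dimension of an invertible $K(n)$-local spectrum. Using the $\infty$-semiadditivity of $\Sp_{K(n)}$, the norm map gives a canonical equivalence
\[
L_{K(n)}(\Sph_{K(n)} \otimes \Sigma^\infty_+ B^n C_p) \simeq \Sph_{K(n)}^{B^n C_p},
\]
and this object is invertible in $\Sp_{K(n)}$: its image in any Lubin--Tate theory $E_n$ has dimension $|B^n C_p|_{E_n} = p^{\binom{n-1}{n}} = 1$ by \Cref{exm:En-mod-cardinality}, hence $|B^nC_p|$ is a unit in $\pi_0 \Sph_{K(n)}$. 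Moreover, the integration map $\int_{B^nC_p}\colon \Sph_{K(n)}^{B^nC_p} \to \Sph_{K(n)}$ sends the canonical generator to $|B^n C_p|$, and since this is a unit, the integration map is an equivalence of invertible $K(n)$-local spectra, canonically trivializing the Picard class of $\Sph_{K(n)}^{B^nC_p}$.

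My second step would be to promote this trivialized invertible object to a spectrum map $\ZZ \to \mathrm{gl}_1\Sph_{K(n)}$. The $\mathbb{E}_\infty$-ring structure on $L = L_{K(n)}\Sigma^\infty_+ B^n C_p$ coming from the $\mathbb{E}_\infty$-group structure on $B^n C_p$ produces, for each $k \in \ZZ$, a coherent family of equivalences $L^{\otimes k} \simeq \Sph_{K(n)}$. These assemble into a spectrum-level lift of the tensor-power map $\ZZ \to \mathrm{pic}(\Sph_{K(n)})$, $k \mapsto [L^{\otimes k}]$, and since this map hits the trivial Picard component (via the canonical trivialization supplied by integration), it factors through the fiber sequence $\mathrm{gl}_1\Sph_{K(n)} \to \mathrm{pic}(\Sph_{K(n)}) \to H\mathrm{Pic}(\Sph_{K(n)})$ to yield a spectrum map $\alpha\colon \ZZ \to \mathrm{gl}_1\Sph_{K(n)}$. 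Evaluating at the generator $1 \in \pi_0\ZZ$ gives $|B^n C_p|$, as desired.

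The main obstacle will be ensuring the required coherence for the iterated tensor powers: namely, that the $\mathbb{E}_\infty$-multiplicative structure on $L$ inherited from $B^n C_p$ interacts compatibly with the integration equivalence $\int_{B^nC_p}\colon L \simeq \Sph_{K(n)}$. Said differently, one must verify that the integration map is a map of $\mathbb{E}_\infty$-$\Sph_{K(n)}$-algebras in an appropriately coherent way, so that the tensor-power construction actually delivers a spectrum-level $\ZZ$-action rather than merely an $\mathbb{E}_1$-level system of scalars. I expect this to follow from invoking the universal property of higher semiadditivity in \Cref{prop:highercommsemiadd} applied to the $\mathbb{E}_\infty$-group $B^n C_p$, together with the multiplicative refinement of integration developed in the proofs of \Cref{thm:GWintro} and \Cref{cor:GWring}.
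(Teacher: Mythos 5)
Your approach takes a genuinely different route from the paper's (via Picard spectra rather than the algebraic $K$-theory of $\FF_p$), but it founders on a fatal error in the very first step. You claim that $\Sph_{K(n)}^{B^n C_p} \simeq L_{K(n)}\Sigma^\infty_+ B^n C_p$ is an invertible object of $\Sp_{K(n)}$, citing $|B^n C_p|_{E_n} = 1$. This conflates the semiadditive cardinality of $B^n C_p$ with the rank (or symmetric-monoidal dimension) of the cochain spectrum, and these are genuinely different invariants: by the Ravenel--Wilson computation, $E_n^{B^n C_p}$ is free of rank $p^{\binom{n}{n}} = p$ over $E_n$, concentrated in even degrees. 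Indeed $|A|_X$ is by definition the composite $X \to X^A \to X$ of the diagonal followed by integration, which is not the rank of $X^A$; already for $A = BC_p$ over $E_n$ the cardinality is $p^{n-1}$ while the rank is $p^n$. Since $\Sph_{K(n)}^{B^n C_p}$ has rank $p>1$, it is not invertible, the integration map cannot be an equivalence, and the whole second step (coherent trivializations $L^{\otimes k}\simeq \Sph_{K(n)}$ feeding into $\mathrm{pic}$) collapses. There is also a degree-shift problem in your final reduction: the fiber of $\mathrm{pic}(R) \to H\pi_0\mathrm{pic}(R)$ is $\tau_{\geq 1}\mathrm{pic}(R)\simeq\Sigma R^\times$, not $R^\times$, so a lift through this fiber evaluates on $\pi_0$ in $\pi_{-1}R^\times = 0$ rather than producing a unit.

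The paper's argument is both shorter and conceptually different. Since $\Sph_{K(n)}$ is an $n$-commutative monoid, cardinality defines a semiring map $(\Spc_n)^\simeq \to \Omega^\infty\Sph_{K(n)}$. The assignment $V\mapsto B^nV$ is a map of commutative monoids $\Vect_{\FF_p}^\simeq \to (\Spc_n)^\simeq$ taking direct sum to Cartesian product, so the composite is a commutative-monoid map into the multiplicative monoid of $\Omega^\infty\Sph_{K(n)}$. As all the values are units (by the height statement in \cite{AmbiHeight}), group completion yields a spectrum map $K(\FF_p) \to \Sph_{K(n)}^\times$ hitting $|B^nC_p|$ on the class of $\FF_p$. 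It then suffices to see this map factors through $\pi_0 K(\FF_p)\simeq\ZZ$, which holds because $\tau_{\geq 1}K(\FF_p)$ has $p$ acting invertibly (Quillen) while $\tau_{\geq 1}\Sph_{K(n)}^\times$ is $p$-complete, so the relevant mapping spectrum vanishes.
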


\begin{proof}
If $n=0$ then $\Sph_{K(0)}\simeq \QQ$ and every unit is a strict unit. Assume from now on that $n\ge 1$. 

Since $\Sph_{K(n)}$ is an $n$-commutative monoid in $\Sp_{K(n)}$, we have a canonical map of semirings 
\[
|-|_{\Sph_{K(n)}}\colon (\Spc_{n})^\simeq \to \Omega^\infty{\Sph_{K(n)}}
\]
taking an $n$-truncated $\pi$-finite space $A$ to the $K(n)$-local cardinality of $A$ (here, $\Spc_n$ denotes the $\infty$-category of $n$-truncated $\pi$-finite spaces). We also have a map of commutative monoids 
\[
B^n \colon \Vect_{\FF_p}^\simeq \to (\Spc_n)^\simeq
\]
taking a finite dimensional $\FF_p$-vector space $V$ to the space $B^nV$, where the monoid operation on the source is direct sum of vector spaces and on the target the Cartesian product. 
The composite 
\begin{equation}\label{eqn:cardcomp}
|B^n(-)|_{\Sph_{K(n)}}\colon \Vect_{\FF_p}^\simeq \to (\Spc_n)^\simeq \to \Omega^\infty \Sph_{K(n)}
\end{equation}
is hence again a morphism of commutative monoids, where the commutative monoid structure on $\Omega^\infty \Sph_{K(n)}$ is given by \emph{multiplication}. Since $\Sp_{K(n)}$ is an $\infty$-category of semiadditive height $n$ (in the sense of \cite{AmbiHeight}), the cardinality $|B^nV|$ is invertible for every $V\in \Vect_{\FF_p}$ (cf. \cite[Theorem 4.4.5]{AmbiHeight}) and so (\ref{eqn:cardcomp}) determines a map of spectra
\[
K(\FF_p):=(\Vect_{\FF_p}^\simeq)^{\gp} \to \Sph_{K(n)}^\times,
\]
which we denote again by $|B^n(-)|_{\Sph_{K(n)}}$. 

To show that $|B^nC_p|$ is a strict unit,
it would suffice to show that $|B^n(-)|_{\Sph_{K(n)}}$ factors through the 0-truncation map $K(\FF_p)\to \pi_0K(\FF_p)\simeq \ZZ$. For this, it is enough to show that $\Map(\tau_{\ge 1}K(\FF_p), \Sph_{K(n)}^\times)$ is contractible. Note that 
\begin{equation}\label{eqn:mapping}
\Map(\tau_{\ge 1}K(\FF_p),\Sph_{K(n)}^\times)\simeq \Map(\tau_{\ge 1}K(\FF_p),\tau_{\ge 1}\Sph_{K(n)}^\times).
\end{equation}
In the source, $p$ acts invertibly by Quillen's computation of the higher $K$-groups of $\FF_p$ \cite{Quillen}.  On the other hand, we claim the target is $p$-complete.  Recall that a spectrum is $p$-complete if and only if each homotopy group is derived $p$-complete \cite[Proposition 2.5]{Bousfield}.  Thus, since $\Sph_{K(n)}$ is $p$-complete for $n\geq 1$ and $\pi_i \Sph_{K(n)} \simeq \pi_i \Sph_{K(n)}^{\times}$ for $i\geq 1$, it follows that $\tau_{\geq 1}\Sph_{K(n)}^{\times}$ is also a $p$-complete spectrum.  Together, these facts imply that the mapping space (\ref{eqn:mapping}) is contractible and the result follows. 
\end{proof}

As a corollary, we have 
\begin{cor} \label{card_log_van}
At the prime $2$, the element $1+\varepsilon \in \pi_0\Sph_{K(1)}$ is a strict unit.
\end{cor}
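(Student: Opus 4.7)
The plan is to deduce the corollary by directly combining the two immediately preceding results. Specifically, I would apply Proposition~\ref{card_strict} in the case $n=1$ and $p=2$, which produces the fact that the cardinality $|B^1C_2|_{\Sph_{K(1)}} = |BC_2|_{\Sph_{K(1)}} \in \pi_0 \Sph_{K(1)}$ is a strict unit. Then I would invoke Theorem~\ref{card_BC_2}, which identifies this element explicitly as $1+\varepsilon$. Since being a strict unit is manifestly a well-defined property of an element of $\pi_0\Sph_{K(1)}^{\times}$, the conclusion follows at once by equating the two expressions.

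There is no genuine obstacle here: the work has already been done, first in establishing the $K(n)$-local computation of $K(\mathbb{F}_p)$-indexed cardinalities (which relies on Quillen's calculation of $\pi_* K(\mathbb{F}_p)$ together with the $p$-completeness of $\tau_{\geq 1}\Sph_{K(n)}^{\times}$), and second in identifying $|BC_2|_{\Sph_{K(1)}}$ via the higher semiadditive Grothendieck-Witt theory of $\mathbb{F}_\ell$. The only thing worth flagging in the write-up is to be explicit that Proposition~\ref{card_strict} applies with the specific values $n=1$, $p=2$, so that $B^nC_p = BC_2$, matching the cardinality computed in Theorem~\ref{card_BC_2}.
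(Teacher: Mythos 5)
Your proposal is correct and is exactly the argument the paper gives: combine Proposition~\ref{card_strict} (with $n=1$, $p=2$) to get that $|BC_2|_{\Sph_{K(1)}}$ is a strict unit, and Theorem~\ref{card_BC_2} to identify this element with $1+\varepsilon$. Nothing further is needed.
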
 
\begin{proof}
By \Cref{card_BC_2}, we have $1+\varepsilon = |BC_2|$, and so the result follows from \Cref{card_strict}. 
\end{proof}

We are now ready to complete the computation of $\log_{K(1)}$ for $\pi_0 \Sph_{K(1)}$ at $p=2$:

\begin{proof}[Proof of \Cref{thm:bodylog}]
If $p$ is odd, this is \cite[Theorem 1.9]{RezkLog}, and if $p=2$ and $d=0$ this formula is shown in \cite[Proposition 1.10]{DustinLog}. Hence, it remains to show that $\log(r+\varepsilon) = \log(r)$ for $r+\varepsilon \in \pi_0(\Sph_{K(1)})^\times$. Note that the invertibility of $r+\varepsilon$ implies that $r$ is odd. Hence, we have 
\[
(1+\varepsilon)(r+\varepsilon) = r + (r+1)\varepsilon = r.
\]
This, together with \Cref{card_log_van} implies that 
\[
\log_{K(1)}(r) = \log_{K(1)}\left((1+\varepsilon)(r+\varepsilon)\right) = \log_{K(1)}(1+\varepsilon) + \log_{K(1)}(r+\varepsilon) = \log_{K(1)}(r+\varepsilon). 
\]
\end{proof}

\bibliographystyle{alpha}
\phantomsection\addcontentsline{toc}{section}{\refname}
\bibliography{cycloref}

\end{document}